\DeclareMathAlphabet{\mathcalligra}{T1}{calligra}{c}{h}
\providecommand{\U}[1]{\protect\rule{.1in}{.1in}}
\newtheorem{theorem}{Theorem}[section]
\newtheorem{proposition}[theorem]{Proposition}
\newtheorem{lemma}[theorem]{Lemma}
\let\oldremark\remark
\renewcommand{\remark}{\oldremark\normalfont}
\let\oldexample\example
\renewcommand{\example}{\oldexample\normalfont}
\let\oldexamples\examples
\renewcommand{\examples}{\oldexamples\normalfont}
\newtheorem{definition}[theorem]{Definition}
\newcommand{\R}{\mathbb{R}}
\newcommand{\N}{\mathbb{N}}
\newcommand{\vf}{\varphi}
\newcommand{\ve}{\varepsilon}
\def\<{{\langle}}
\def\>{{\rangle}}
\def\bea{\begin{eqnarray*} }
\def\eea{\end{eqnarray*} }
\def\be{\begin{equation} }
\def\ee{\end{equation} }
\def\qed{\ifhmode\unskip\nobreak\fi\ifmmode\ifinner
\else\hskip5 pt \fi\fi\hbox{\hskip5 pt \vrule width4 pt
height6 pt  depth1.5 pt \hskip 1pt }}
\DeclareMathOperator*{\diver}{div}
\DeclareMathOperator*{\Vol}{Vol}
\DeclareMathOperator*{\Area}{Area}
\DeclareMathOperator*{\vol}{vol}
\DeclareMathOperator*{\diam}{diam}
\DeclareMathOperator*{\supp}{supp}
\DeclareMathOperator*{\res}{res}
\DeclareMathOperator*{\grad}{grad}
\DeclareMathOperator*{\esssup}{ess\,sup}
\DeclareMathOperator*{\essinf}{ess\,inf}
\DeclareMathOperator*{\dy}{dy}
\DeclareMathOperator*{\tr}{tr}
\DeclareMathOperator*{\ad}{ad}
\begin{document}

\title{On the Steklov spectrum of covering spaces and total spaces}
\author{Panagiotis Polymerakis}
\date{}

\maketitle

\renewcommand{\thefootnote}{\fnsymbol{footnote}}
\footnotetext{\emph{Date:} \today} 
\renewcommand{\thefootnote}{\arabic{footnote}}

\renewcommand{\thefootnote}{\fnsymbol{footnote}}
\footnotetext{\emph{2010 Mathematics Subject Classification.} 58J50, 35P15, 53C99.}
\renewcommand{\thefootnote}{\arabic{footnote}}

\renewcommand{\thefootnote}{\fnsymbol{footnote}}
\footnotetext{\emph{Key words and phrases.} Bottom of spectrum, Dirichlet-to-Neumann map, manifold with bounded geometry, Steklov spectrum, Riemannian covering, amenable covering, Riemannian submersion, Riemannian principal bundle, amenable Lie group.}
\renewcommand{\thefootnote}{\arabic{footnote}}

\begin{abstract}
We show the existence of a natural Dirichlet-to-Neumann map on Riemannian manifolds with boundary and bounded geometry, such that the bottom of the Dirichlet spectrum is positive. This map regarded as a densely defined operator in the $L^2$-space of the boundary admits Friedrichs extension. We focus on the spectrum of this operator on covering spaces and total spaces of Riemannian principal bundles over compact manifolds.
\end{abstract}

\section{Introduction}

During the last years, the Steklov spectrum of compact Riemannian manifolds has been extensively studied, and analogues of various results on the Dirichlet and the Neumann spectrum have been established (cf. for instance the survey \cite{GP}, or \cite{FS} and the references therein).
However, the Steklov spectrum of non-compact Riemannian manifolds has not attracted that much attention yet. This is reasonable, since even the definition of Dirichlet-to-Neumann maps is quite more complicated in this case. Indeed, there may exist compactly supported smooth functions on the boundary of such a manifold which do not admit unique harmonic extension even under constraints, such as square-integrability or boundedness, or the normal derivative of the harmonic extension does not satisfy integrability conditions to give rise to an operator in a Hilbert space.

In this paper, we focus on a certain Dirichlet-to-Neumann map on Riemannian manifolds with boundary and bounded geometry, in the sense of \cite{GN,MR1817852}. For a Riemannian manifold $M$ with boundary, we denote by $\nu$ the outward pointing unit normal to the boundary, and by $\lambda_0^D(M)$ the bottom of the Dirichlet spectrum of $M$. The basis of our discussion is that if $\lambda_0^D(M) > 0$, then there exists a natural Dirichlet-to-Neumann map on $M$, as illustrated in the following.

\begin{theorem}\label{unique harmonic ext thm}
Let $M$ be a Riemannian manifold with boundary and bounded geometry, such that $\lambda_0^D(M) > 0$. Then any $f \in C^{\infty}_c(\partial M)$ admits a unique square-integrable harmonic extension $\mathcal{H}f \in C^{\infty}(M)$. Moreover, this extension satisfies $\nu(\mathcal{H}f) \in L^{2}(\partial M)$.
\end{theorem}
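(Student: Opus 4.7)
The plan is to construct $\mathcal{H}f$ by reducing the harmonic extension problem to the invertibility of the Dirichlet Laplacian on $L^{2}(M)$, which follows from $\lambda_{0}^{D}(M) > 0$. First, I would extend $f$ to a compactly supported $F \in C^{\infty}_{c}(M)$ with $F|_{\partial M}=f$; such an extension exists because bounded geometry provides a uniform tubular collar of $\partial M$. Then $\Delta F \in C^{\infty}_{c}(M) \subset L^{2}(M)$. Let $\Delta_{D}$ denote the Friedrichs extension of the Dirichlet Laplacian; the hypothesis $\lambda_{0}^{D}(M) > 0$ places its spectrum in $[\lambda_{0}^{D}(M),\infty)$, so $\Delta_{D}$ is boundedly invertible on $L^{2}(M)$. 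Setting $v := \Delta_{D}^{-1}(\Delta F)$ and $\mathcal{H}f := F - v$ produces a function in $L^{2}(M)$ that satisfies $\Delta(\mathcal{H}f) = 0$ weakly and has boundary trace $f$; interior and up-to-boundary elliptic regularity then upgrade $\mathcal{H}f$ to an element of $C^{\infty}(M)$.

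For uniqueness, suppose $u \in L^{2}(M) \cap C^{\infty}(M)$ is harmonic with $u|_{\partial M}=0$. Fix $p_{0} \in M$ and let $\chi_{R}$ be a smooth cutoff equal to $1$ on $B(p_{0},R)$, supported in $B(p_{0},2R)$, with $|\nabla \chi_{R}| \leq C/R$; such cutoffs are available under bounded geometry. Green's identity, using $u|_{\partial M}=0$ and $\supp \chi_{R}$ compact, yields
\[
\int_{M} \chi_{R}^{2} |\nabla u|^{2} \;=\; -2 \int_{M} \chi_{R}\, u\, \nabla \chi_{R} \cdot \nabla u,
\]
and Young's inequality gives $\int_{M} \chi_{R}^{2} |\nabla u|^{2} \leq 4 \int_{M} |\nabla \chi_{R}|^{2} u^{2}$. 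Letting $R \to \infty$ and using $u \in L^{2}(M)$ forces $\nabla u \equiv 0$, hence $u$ is locally constant; since $\lambda_{0}^{D}(M) > 0$ every connected component of $M$ meets $\partial M$, where $u$ vanishes, so $u \equiv 0$.

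For the final claim, decompose $\mathcal{H}f = F - v$. Since $\nu(F)$ is compactly supported and smooth, it suffices to handle $\nu(v)$. Now $v \in \mathrm{dom}(\Delta_{D})$ with $\Delta v \in L^{2}(M)$ and $v|_{\partial M}=0$. The global $H^{2}$-regularity estimate for the Dirichlet problem on a manifold with bounded geometry yields $v \in H^{2}(M)$; the uniform trace theorem then gives $\nabla v |_{\partial M} \in L^{2}(\partial M)$, and in particular $\nu(v) \in L^{2}(\partial M)$.

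The main technical obstacle I anticipate is precisely this last step: justifying the global $H^{2}$-regularity and the global $L^{2}$-trace statement on manifolds with boundary and bounded geometry. Both are standard consequences of covering $M$ by uniform Fermi-collar charts equipped with a uniformly locally finite partition of unity, in which classical Euclidean half-space estimates can be patched together; nonetheless, setting up the correct functional-analytic framework in the sense of \cite{GN,MR1817852} is where most of the care is required.
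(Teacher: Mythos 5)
Your proposal is correct, and the existence and regularity parts follow essentially the same route as the paper: extend $f$ to some $F \in C^{\infty}_c(M)$, solve $\Delta^D v = \Delta F$ using the bounded invertibility of the Dirichlet Laplacian coming from $\lambda_0^D(M) > 0$, set $\mathcal{H}f = F - v$, and then use the global $H^2$-estimate for $\mathcal{D}(\Delta^D)$ on manifolds with bounded geometry (the paper's Theorem~\ref{Elliptic estimates}, from \cite{GN}) together with the uniform trace theorem to get $\nu(\mathcal{H}f) \in L^2(\partial M)$. The one genuine divergence is in the uniqueness step: the paper shows that a square-integrable smooth harmonic function $h$ vanishing on $\partial M$ lies in $\mathcal{D}(\Delta^*)$ with $\Delta^* h = 0$, invokes essential self-adjointness of the Dirichlet Laplacian on the complete manifold $M$ to identify $\Delta^*$ with $\Delta^D$, and concludes $h = 0$ from injectivity of $\Delta^D$; you instead run a Caccioppoli-type cutoff argument with $|\nabla\chi_R| \leq C/R$ to force $\nabla u \equiv 0$ and then use connectedness. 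Your route is more elementary and self-contained (it does not even use bounded geometry for this step, only completeness), while the paper's route reuses machinery it needs anyway in \Cref{approximation}; note also that connectedness of $M$ is a standing assumption in the paper, so your aside about $\lambda_0^D(M)>0$ forcing every component to meet $\partial M$ is unnecessary. Two minor points of care: the extension $F$ exists for any smooth boundary via an ordinary collar, so bounded geometry is not really what provides it; and for the final trace statement the paper does not invoke a trace theorem for normal derivatives of $H^2$ functions directly, but rather extends $\nu$ to a globally bounded vector field $N$ with $\nabla N$ bounded (\Cref{extension}), observes $N(\mathcal{H}f) \in H^1(M)$, and applies the $H^1(M) \to L^2(\partial M)$ trace theorem --- you should make this (or an equivalent) reduction explicit.
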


The main ingredient in the preceding theorem is that the square-integrable harmonic extension actually belongs to $H^2(M)$. This relies on elliptic estimates on manifolds with boundary and bounded geometry, which were recently proved in \cite{GN}. In view of \Cref{unique harmonic ext thm}, we may consider the Dirichlet-to-Neumann map
\begin{equation*}
	\Lambda \colon C^{\infty}_c(\partial M) \subset L^{2}(\partial M) \to L^{2}(\partial M), \text{ } f \mapsto \nu(\mathcal{H}f).
\end{equation*}
This linear operator admits Friedrichs extension, being densely-defined, symmetric and non-negative definite. The spectrum of this self-adjoint operator is called the \textit{Steklov spectrum} of $M$. It is worth to point out that if $M$ is compact, then this definition coincides with the standard one in the literature. The first part of this paper is devoted to the study of some basic properties of this operator.

In the second part of the paper, we focus on the behavior of the Steklov spectrum under Riemannian coverings. The philosophy of such results is that some properties of the fundamental group of a compact manifold are reflected in the geometry of its universal covering space. A classic result in this direction is due to Brooks \cite{Brooks} asserting that the fundamental group of a closed (that is, compact and without boundary) Riemannian manifold is amenable if and only if the bottom of the spectrum of the Laplacian on its universal covering space is zero. The analogous results for manifolds with boundary involving the Dirichlet and the Neumann spectrum have been recently established in \cite{Mine2}. It is also worth to mention that according to \cite{Mine}, if the fundamental group of the manifold is amenable then its spectrum is contained in the spectrum of its universal covering space.

It is noteworthy that any covering space of a compact manifold with boundary has bounded geometry, and the bottom of its Dirichlet spectrum is positive. Hence, we may define the Steklov spectrum of the covering space as above. In this setting, we prove the analogue of Brooks' result for manifolds with boundary involving the Steklov spectrum.

\begin{theorem}\label{universal covering thm}
Let $M$ be a compact Riemannian manifold with boundary and denote by $\tilde{M}$ its universal covering space. Then the following are equivalent:
\begin{enumerate}[topsep=0pt,itemsep=-1pt,partopsep=1ex,parsep=0.5ex,leftmargin=*, label=(\roman*), align=left, labelsep=0em]
\item $\pi_1(M)$ is amenable,
\item the Steklov spectra satisfy the inclusion $\sigma(M) \subset \sigma(\tilde{M})$,
\item the bottom of the Steklov spectrum of $\tilde{M}$ is zero.
\end{enumerate}
\end{theorem}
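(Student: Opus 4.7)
The implication (ii) $\Rightarrow$ (iii) is immediate: on the compact connected manifold $M$, the constant function $1$ on $\partial M$ is a Steklov eigenfunction with eigenvalue $0$ (its harmonic extension is constant, with vanishing normal derivative), so $0 \in \sigma(M) \subset \sigma(\tilde{M})$, and therefore $\inf \sigma(\tilde{M}) = 0$.

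For (i) $\Rightarrow$ (ii), I would adapt the amenable-covering framework of \cite{Mine} from the Laplacian to the Dirichlet-to-Neumann operator of \Cref{unique harmonic ext thm}. Fix a Steklov eigenvalue $\lambda$ of the compact Steklov problem on $M$ with boundary eigenfunction $f \in C^\infty(\partial M)$ and harmonic extension $Hf \in C^\infty(M)$. By amenability of $\pi_1(M)$, the Riemannian covering $\partial \tilde{M} \to \partial M$ admits a Følner sequence of smooth cutoffs $\chi_n$ on $\tilde{M}$ with $\|\nabla \chi_n\|_{L^2} / \|\chi_n\|_{L^2}$ negligible against the lifts of functions on $M$. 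Set $\varphi_n := \chi_n|_{\partial \tilde{M}} \cdot (f \circ \pi)$ and $u_n := \chi_n \cdot (Hf \circ \pi)$. A Green's-identity computation, using that $Hf \circ \pi$ is harmonic, reduces $\int_{\tilde{M}} |\nabla u_n|^2$ to $\lambda \|\varphi_n\|^2_{L^2(\partial \tilde{M})}$ plus commutator terms involving $[\Delta, \chi_n](Hf \circ \pi)$, which are $o(\|\varphi_n\|^2)$ by the Følner property. Since the genuine harmonic extension $\mathcal{H} \varphi_n$ minimises Dirichlet energy, this yields $\langle \Lambda_{\tilde{M}} \varphi_n, \varphi_n \rangle \leq \lambda \|\varphi_n\|^2 + o(\|\varphi_n\|^2)$; a matching lower bound, and the standard upgrade from a Rayleigh-quotient estimate to a Weyl sequence, then place $\lambda \in \sigma(\tilde{M})$.

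For the hard implication (iii) $\Rightarrow$ (i) I would argue by contraposition: assume $\pi_1(M)$ is non-amenable and aim to prove $\inf \sigma(\tilde{M}) > 0$. The non-amenable case of the Brooks-type results in \cite{Mine2} supplies a positive isoperimetric constant for $\tilde{M}$ at the level of unions of deck translates of a fundamental domain. The target estimate is
\begin{equation*}
\int_{\tilde{M}} |\nabla \mathcal{H} \varphi|^2 \geq c \int_{\partial \tilde{M}} \varphi^2 \qquad \text{for every } \varphi \in C^\infty_c(\partial \tilde{M}),
\end{equation*}
which I would derive by coupling this combinatorial spectral gap with a uniform trace inequality on a single fundamental domain $D$ (whose constants depend only on the bounded geometry inherited from $M$), and then summing over deck translates.

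The main obstacle will be this last step. Since the harmonic extension $u = \mathcal{H} \varphi$ does not vanish on $\partial \tilde{M}$, the Dirichlet-spectrum lower bound $\int |\nabla u|^2 \geq \lambda_0^D(\tilde{M}) \int u^2$ does not apply directly. I would circumvent this by decomposing $u$ into a piece supported in a tubular collar of $\partial \tilde{M}$ (estimated fundamental-domain-wise by the trace inequality) and a piece lying in $H^1_0(\tilde{M})$ (estimated by the non-amenable Dirichlet spectral gap coming from \cite{Mine2}), and then optimising the splitting against the isoperimetric constant.
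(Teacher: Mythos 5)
Your outline of (ii) $\Rightarrow$ (iii) is correct and matches the paper. The other two implications each contain a genuine gap.

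For (i) $\Rightarrow$ (ii): your cutoff construction is in the right spirit, but the final step --- ``a matching lower bound, and the standard upgrade from a Rayleigh-quotient estimate to a Weyl sequence'' --- does not exist. Knowing $\langle \Lambda_{\tilde M}\varphi_n,\varphi_n\rangle/\|\varphi_n\|^2 \to \lambda$ only tells you that $\lambda$ lies in the convex hull of the spectrum; it places $\lambda$ in $\sigma(\tilde M)$ only when $\lambda$ is the bottom. Since $\sigma(M)$ is a discrete sequence of eigenvalues and (ii) demands the full inclusion, you need an honest Weyl sequence, i.e.\ a bound on $\|\Lambda_{\tilde M}\varphi_n-\lambda\varphi_n\|_{L^2(\partial\tilde M)}$ itself. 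This is the technical heart of the forward direction, and the paper supplies it as \Cref{approximation of test functions}: for any compactly supported smooth extension $f$ of $\varphi$ one has $\|\Lambda f-\lambda f\|_{L^2(\partial\tilde M)}\le\|\nu(f)-\lambda f\|_{L^2(\partial\tilde M)}+C\|\Delta f\|_{L^2(\tilde M)}$, which lets one test with the explicit cutoff extension $\chi_P\tilde f$ and control the error by $\|\Delta(\chi_P\tilde f)\|_{L^2}$. That estimate in turn rests on the $H^2$ elliptic estimate of \Cref{Elliptic estimates} (using $\lambda_0^D(\tilde M)>0$ and bounded geometry) and the trace theorem \Cref{Trace thm}; none of this machinery appears in your sketch, and without it the commutator terms you mention cannot be converted into an $L^2(\partial\tilde M)$ bound on $(\Lambda-\lambda)\varphi_n$.

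For (iii) $\Rightarrow$ (i): the contrapositive route through isoperimetric constants and a collar/interior decomposition of $\mathcal H\varphi$ is not carried out --- you correctly identify the obstruction yourself, and it is far from clear that ``optimising the splitting'' closes it. The paper sidesteps all of this with a two-line reduction (\Cref{Steklov and Neumann}): if $\sigma_0(\tilde M)=0$, take $f_n\in C^\infty_c(\tilde M)$ with $\|f_n\|_{L^2(\partial\tilde M)}=1$ and $\int_{\tilde M}\|\grad f_n\|^2\to 0$ (from the variational characterization \Cref{bottom of Steklov}); if $\lambda_0^N(\tilde M)>0$ these same test functions would satisfy $\|f_n\|_{L^2(\tilde M)}\to 0$, hence $f_n\to 0$ in $H^1(\tilde M)$ and, by the trace theorem, in $L^2(\partial\tilde M)$ --- a contradiction. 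So $\lambda_0^N(\tilde M)=0$, and amenability follows from the Neumann version of Brooks' theorem (\Cref{Brooks Neumann}, i.e.\ \cite{Mine2}). I recommend replacing your entire third paragraph with this reduction rather than attempting a Steklov-specific isoperimetric argument.
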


It seems interesting that the topology of the boundary does not play any role in the preceding theorem, taking into account that we consider operators acting on functions defined on the boundary (which is a significant difference from the Dirichlet and the Neumann analogues of Brooks' result). For instance, the fundamental group of any boundary component may be non-amenable, while the fundamental group of the manifold is amenable. Furthermore, the fundamental group of any boundary component may be amenable (or even trivial), while the fundamental group of the manifold is non-amenable.

In the third part of the paper, we consider the Steklov spectrum of total spaces of Riemannian principal bundles over compact manifolds. The notion of Riemannian submersion has been introduced in 1960s as a tool to describe the geometry of a manifold in terms of simpler components (the base space and the fibers). Hence, it is natural to express the spectrum of the total space in terms of the geometry and the spectrum of the base space and the fibers. There are various results in this direction involving compact (cf. for instance the survey \cite{MR2963622}) or non-compact manifolds (see for example \cite{Bessa,MR3787357,Mine4,Mine3}). Our discussion is motivated by the recent paper \cite{Mine3}, which focuses on non-compact total spaces of Riemannian principal bundles (see also \cite{MR2348279} for the compact case).

To set the stage, let $G$ be a connected Lie group acting freely, smoothly and properly via isometries on a Riemannian manifold $M_2$. Then the quotient $M_1 = M_2/G$ is a Riemannian manifold and the projection $p \colon M_2 \to M_1$ is a Riemannian submersion. We then say that $p$ \textit{arises from the action of} $G$. Similarly to the case of Riemannian coverings, we are interested in how properties of $G$ are reflected in the spectrum of $M_2$. As indicated in \cite{Mine3}, it is natural to compare the spectrum of the Laplacian on $M_2$ with the spectrum of the Schr\"odinger operator
\[
S = \Delta + \frac{1}{4} \| p_*H \|^2 - \frac{1}{2} \diver p_*H
\]
on $M_1$, where $H$ is the mean curvature of the fibers. More precisely, according to \cite[Theorem 1.3]{Mine3}, if $M_1$ is compact, then $G$ is unimodular and amenable if and only if $\lambda_0^D(M_2) = \lambda_0^D(S)$. It is noteworthy that if $G$ is unimodular, then $S$ is intertwined with the symmetric diffusion operator
\[
L = \Delta + p_*H
\]
regarded in $L^2_{\sqrt{V}}(M_1)$, where $V$ is a function expressing the volume element of the fiber. If $M_1$ is compact, then we have that $\lambda_0^D(L) > 0$, which yields that any $f \in C^{\infty}(\partial M_1)$ has a unique $L$-harmonic extension $\mathcal{H}_L f \in C^{\infty}(M_1)$ (that is, $L (\mathcal{H}_Lf) = 0$). Hence, we may consider the Dirichlet-to-Neumann map
\[
\Lambda_{L} \colon C^{\infty}(\partial M_1) \subset L^2_{\sqrt{V}}(\partial M_1) \to L^2_{\sqrt{V}}(\partial M_1), \text{ } f \mapsto \nu(\mathcal{H}_L f).
\]
We denote the spectrum of the Friedrichs extension of this operator by $\sigma_L(M_1)$.

Returning to our discussion on the Steklov spectrum, we begin by pointing out that if $p \colon M_2 \to M_1$ is a Riemannian submersion arising from the action of a connected Lie group $G$, where $M_1$ is compact, then $M_2$ has bounded geometry and $\lambda_0^D(M_2) > 0$ (the latter one is a consequence of the recent \cite[Theorem 1.1]{Mine3}). Therefore, we may define the Steklov spectrum of $M_2$ as above. In this setting, we establish the following analogue of \cite[Theorem 1.3]{Mine3}.

\begin{theorem}\label{Submersion thm}
	Let $p \colon M_2 \to M_1$ be a Riemannian submersion arising from the action of a connected Lie group $G$, where $M_1$ is compact with boundary. Then the following are equivalent:
	\begin{enumerate}[topsep=0pt,itemsep=-1pt,partopsep=1ex,parsep=0.5ex,leftmargin=*, label=(\roman*), align=left, labelsep=0em]
		\item $G$ is unimodular and amenable,
		\item the Steklov spectra satisfy the inclusion $\sigma_{L}(M_1) \subset \sigma(M_2)$,
		\item the bottom of the Steklov spectrum of $M_2$ is zero.
	\end{enumerate}
\end{theorem}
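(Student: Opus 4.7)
I would prove the equivalences in the cycle $(\mathrm{ii})\Rightarrow(\mathrm{iii})\Rightarrow(\mathrm{i})\Rightarrow(\mathrm{ii})$, paralleling the Brooks-type argument of \cite[Theorem 1.3]{Mine3} for the Dirichlet spectrum, but adapted to the Dirichlet-to-Neumann setting. The implication $(\mathrm{ii})\Rightarrow(\mathrm{iii})$ is an easy observation, $(\mathrm{i})\Rightarrow(\mathrm{ii})$ is a F\"olner-type lifting, and $(\mathrm{iii})\Rightarrow(\mathrm{i})$ is the main obstacle.

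\textbf{$(\mathrm{ii})\Rightarrow(\mathrm{iii})$.} Since $\Delta$ annihilates constants and $p_{*}H$ is a vector field, $L(1)=0$, so the constant $1\in C^{\infty}(\partial M_{1})$ is its own $L$-harmonic extension; hence $\Lambda_{L}1=\nu(1)=0$, showing $0\in\sigma_{L}(M_{1})\subset\sigma(M_{2})$. As $\Lambda$ is non-negative, the bottom of $\sigma(M_{2})$ must be zero.

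\textbf{$(\mathrm{i})\Rightarrow(\mathrm{ii})$.} Given $\lambda\in\sigma_{L}(M_{1})$, pick a Weyl sequence $f_{n}\in C^{\infty}(\partial M_{1})$ for $\Lambda_{L}-\lambda$ with $L$-harmonic extensions $u_{n}$. Unimodularity of $G$ yields the standard intertwining under which the basic functions $u_{n}\circ p$ are $\Delta_{M_{2}}$-harmonic on $M_{2}$; their traces on $\partial M_{2}$, however, fail to be square-integrable because the fibers are non-compact. Amenability of $G$ supplies a Reiter sequence in $C^{\infty}_{c}(G)$ which, acting on a fundamental domain, produces cutoff functions $\chi_{m}$ on $M_{2}$ with $\|\nabla\chi_{m}\|_{2}/\|\chi_{m}\|_{2}\to 0$ on compact sets. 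Setting $v_{n,m}=\chi_{m}\cdot(u_{n}\circ p)$ and $g_{n,m}=v_{n,m}|_{\partial M_{2}}$, the minimizing property of the harmonic extension together with the harmonicity of $u_{n}\circ p$ yields
\[
\langle\Lambda g_{n,m},g_{n,m}\rangle\,\leq\,\int_{M_{2}}|\nabla v_{n,m}|^{2}\,=\,\int_{M_{2}}\chi_{m}^{2}|\nabla(u_{n}\circ p)|^{2}\,+\,E_{n,m},
\]
where $E_{n,m}$ involves $|\nabla\chi_{m}|$ and tends to $0$ by the F\"olner property. A coarea/fiber-integration converts the leading term to a weighted Dirichlet energy of $u_{n}$ on $M_{1}$, which equals $\langle\Lambda_{L}f_{n},f_{n}\rangle_{L^{2}_{\sqrt{V}}}\to\lambda\|f_{n}\|^{2}$ by integration by parts. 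A diagonal extraction then produces a Weyl sequence for $\Lambda-\lambda$ on $\partial M_{2}$, so $\lambda\in\sigma(M_{2})$.

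\textbf{$(\mathrm{iii})\Rightarrow(\mathrm{i})$.} I would argue by contraposition: assume $G$ is non-amenable or non-unimodular, and show that the bottom of $\sigma(M_{2})$ is strictly positive. A vanishing bottom would produce a sequence of boundary functions of unit $L^{2}$-norm whose harmonic extensions $u_{n}$ satisfy $\int_{M_{2}}|\nabla u_{n}|^{2}\to 0$. The strategy is to average $u_{n}$ along $G$-orbits via a Reiter-type mean (well-defined only under amenability/unimodularity hypotheses) so as to descend to a Weyl sequence for $\Lambda_{L}$ at eigenvalue $0$ on $\partial M_{1}$; the Brooks-Kesten mechanism already exploited in the proof of \cite[Theorem 1.1]{Mine3} to show $\lambda_{0}^{D}(M_{2})>0$ should then force a quantitative bound
\[
\int_{M_{2}}|\nabla u|^{2}\,\geq\,c\,\|u|_{\partial M_{2}}\|_{L^{2}(\partial M_{2})}^{2}
\]
whenever $G$ fails either property. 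The main technical difficulty I expect is that the Rayleigh denominator is a boundary integral rather than an interior one: one needs a uniform boundary-trace estimate on $\partial M_{2}$ compatible with both the $G$-action and the Fermi-coordinate structure near $\partial M_{2}$, in order to transfer the Brooks-type inequality from the interior Dirichlet setting of \cite{Mine3} to the Steklov setting considered here.
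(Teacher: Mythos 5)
Your overall architecture (the cycle of implications, the easy step $0\in\sigma_L(M_1)$ for (ii)$\Rightarrow$(iii), and F\o lner-type cutoffs applied to the lifted eigenfunction for (i)$\Rightarrow$(ii)) matches the paper's, but there are two genuine gaps. First, in (i)$\Rightarrow$(ii) your key display only bounds the quadratic form $\langle\Lambda g_{n,m},g_{n,m}\rangle$ from above by roughly $\lambda\|f_n\|^2$. An upper bound on the Rayleigh quotient proves $\sigma_0(M_2)\leq\lambda$; it does not produce a Weyl sequence for $\Lambda-\lambda$, and for $\lambda$ strictly above the bottom of the spectrum this is not enough to conclude $\lambda\in\sigma(M_2)$. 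The paper resolves exactly this issue with \Cref{approximation of test functions}, which uses the $H^2$ elliptic estimates on manifolds of bounded geometry (\Cref{Elliptic estimates}) and the trace theorem to bound $\|\Lambda h-\lambda h\|_{L^2(\partial M_2)}$ by $\|\nu(h)-\lambda h\|_{L^2(\partial M_2)}+C\|\Delta h\|_{L^2(M_2)}$ for \emph{any} compactly supported smooth extension $h$; since $M_1$ is compact one takes an honest eigenfunction $f$ (so $Lf=0$ and $\nu_1(f)=\lambda f$), and both error terms are then supported only in the F\o lner collar $W_i'(y)$, where \Cref{domain volume} makes them small relative to $\|h\|_{L^2(\partial M_2)}^2\geq|W_0|_g$. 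Without some substitute for this operator-norm control, your argument does not close.

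Second, your (iii)$\Rightarrow$(i) is not workable as described: in the contrapositive you assume $G$ is \emph{not} amenable (or not unimodular), so the ``Reiter-type mean'' you propose to average $u_n$ along $G$-orbits is precisely the object that does not exist, and the asserted quantitative lower bound on $\int_{M_2}\|\grad u\|^2$ in terms of the boundary trace is left unsubstantiated. The paper's route is different and much simpler: $\sigma_0(M_2)=0$ forces $\lambda_0^N(M_2)=0$ (\Cref{Steklov and Neumann}, an immediate consequence of the variational characterization of $\sigma_0$ together with the trace theorem \Cref{Trace thm}), and then $\lambda_0^N(M_2)=0$ implies $G$ is unimodular and amenable (\Cref{converse Neumann}): one discards the horizontal part of the gradient, integrates over the fibers, finds a single fiber $F_y$ on which the restricted Rayleigh quotient is small, transplants to $G$ via a local trivialization with uniformly comparable left-invariant metrics, and concludes $\lambda_0(G)=0$, whence the claim by \Cref{unimodular and amenable}. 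Your worry about a ``uniform boundary-trace estimate'' is thereby bypassed entirely, since the Neumann Rayleigh quotient has an interior denominator.
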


The paper is organized as follows: In Section \ref{Preliminaries}, we give some preliminaries involving functional analysis, the spectrum of Laplace type operators, Sobolev spaces on manifolds with bounded geometry, amenable coverings, and Lie groups. Section \ref{Steklov spectrum} is devoted to the proof of \Cref{unique harmonic ext thm} and the discussion of some properties of the Steklov spectrum. In Section \ref{Coverings}, we focus on Riemannian coverings and establish an extension of \Cref{universal covering thm}. In Section \ref{submersions}, we study Riemannian submersions and give the proof of \Cref{Submersion thm}.

\medskip

\textbf{Acknowledgements.} I would like to thank Werner Ballmann for his helpful comments and remarks. I am also grateful to the Max Planck Institute for Mathematics in Bonn for its support and hospitality.

\section{Preliminaries}\label{Preliminaries}

We begin by recalling some basic facts from functional analysis, which may be found for instance in \cite{MR1361167,MR1335452}. Throughout this section, let $\mathsf{H}$ be a separable Hilbert space over $\R$.

The \textit{spectrum} of a self-adjoint operator $T \colon \mathcal{D}(T) \subset \mathsf{H} \to \mathsf{H}$ is defined as
\[
\sigma(T) = \{ \lambda \in \R : T - \lambda : \mathcal{D}(T) \subset \mathsf{H} \to \mathsf{H} \text{ is not bijective} \}.
\]
In general, the spectrum of a self-adjoint operator does not consist only of eigenvalues, but consists of approximate eigenvalues, as the following proposition indicates.

\begin{proposition}\label{approximate eigenvalues}
Let $T \colon \mathcal{D}(T) \subset \mathsf{H} \to \mathsf{H}$ be a self-adjoint operator and consider $\lambda \in \R$. Then $\lambda \in \sigma(T)$ if and only if there exists $(v_n)_{n \in \N} \subset \mathcal{D}(T)$ such that $\| v_n \| = 1$ and $(T-\lambda)v_n \rightarrow 0$ in $\mathsf{H}$.
\end{proposition}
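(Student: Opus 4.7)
The plan is to prove this via the standard Weyl-sequence criterion from spectral theory. The key observation is that self-adjointness of $T-\lambda$ gives the identity $\overline{\text{Range}(T-\lambda)} = \kernel(T-\lambda)^{\perp}$, which cleanly splits the failure of bijectivity of $T-\lambda$ into two cases; the converse direction, by contrast, should be a one-line application of the closed graph theorem.

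For the forward direction, I would suppose $\lambda \in \sigma(T)$ and distinguish two cases. If $\kernel(T-\lambda) \neq 0$, then any unit vector $v$ in the kernel gives a trivially constant sequence $v_n := v$ with $(T-\lambda)v_n = 0$. Otherwise $T-\lambda$ is injective, and by the identity above its range must then be dense. Since $T-\lambda$ is still not bijective, the range cannot be all of $\mathsf{H}$ and hence cannot be closed, so the inverse $(T-\lambda)^{-1}$ defined on the range is unbounded. I would then pick $w_n$ in the range with $\|w_n\| = 1$ and $\|(T-\lambda)^{-1} w_n\| \to \infty$, and normalize to produce
\[ v_n := \frac{(T-\lambda)^{-1} w_n}{\|(T-\lambda)^{-1} w_n\|}, \]
which satisfies $\|v_n\| = 1$ and $(T-\lambda) v_n \to 0$ in $\mathsf{H}$.

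For the converse, I would argue by contradiction: assume such a sequence exists but $\lambda \notin \sigma(T)$. Then $T-\lambda$ is bijective and, since $T$ is self-adjoint (and therefore closed), the closed graph theorem supplies a bounded inverse $(T-\lambda)^{-1}$. Applying this inverse to $(T-\lambda)v_n \to 0$ forces $v_n \to 0$, contradicting $\|v_n\| = 1$. I expect the only mildly delicate point to be the injective-but-not-surjective subcase of the forward direction, where self-adjointness is genuinely needed to ensure that the range of $T-\lambda$ is dense, so that its unbounded inverse is at least densely defined and a Weyl sequence can actually be extracted; everything else reduces to routine Hilbert-space bookkeeping.
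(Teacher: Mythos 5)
The paper states this proposition without proof, as a standard fact from spectral theory drawn from its functional-analysis references (Hislop--Sigal, Kato), so there is no argument of the author's to compare against. Your proof is the standard Weyl-criterion argument and is correct; the only step worth making explicit is that the implication ``range not closed $\Rightarrow$ inverse unbounded'' relies on $T-\lambda$ being a closed operator (automatic since $T$ is self-adjoint), because a bounded inverse together with closedness of $T-\lambda$ would force the range to be closed.
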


The infimum of $\sigma(T)$ is called the \textit{bottom} of the spectrum of $T$ and is denoted by $\lambda_0(T)$. A particularly useful expression for $\lambda_0(T)$ is provided by Rayleigh's theorem, which asserts that
\[
\lambda_0(T) = \inf_{v} \frac{\< Tv,v \>}{\| v \|^2},
\]
where the infimum is taken over all non-zero $v \in \mathcal{D}(T)$.

Consider now a densely-defined, symmetric linear operator $T \colon \mathcal{D}(T) \subset \mathsf{H} \to \mathsf{H}$. We say that $T$ is \textit{bounded from below} if there exists $c\in\R$ such that $\< Tv,v \> \geq c \| v \|^2$ for any $v \in \mathcal{D}(T)$. Fix such a $c$ and define the inner product
\[
\< v,w \>_{T} = \< Tv,w \> + (1-c) \< v,w \>
\]
on $\mathcal{D}(T)$. Denoting by $\mathsf{H}_T$ the completion of $\mathcal{D}(T)$ with respect to this inner product, it is easy to see that the inclusion $\mathcal{D}(T) \xhookrightarrow{} \mathsf{H}$ extends to an injective map $\mathsf{H}_T \to \mathsf{H}$. Using this map, we regard $\mathsf{H}_{T}$ as a subspace of $\mathsf{H}$. The \textit{Friedrichs extension} $T^F$ of $T$ is the restriction of the adjoint $T^*$ to the space $\mathcal{D}(T^F) = \mathsf{H}_T \cap \mathcal{D}(T^*)$. The Friedrichs extension $T^F$ is a self-adjoint extension of $T$, and Rayleigh's theorem implies the following expression for the bottom of its spectrum.

\begin{proposition}\label{bottom of Friedrichs extension}
The bottom of the spectrum of $T^F$ is given by
\[
\lambda_0(T^F) = \inf_{v} \frac{\< Tv,v  \>}{\| v \|^2},
\]
where the infimum is taken over all non-zero $v \in \mathcal{D}(T)$.
\end{proposition}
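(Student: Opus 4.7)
The plan is to establish the two inequalities between $\lambda_0(T^F)$ and $\mu := \inf \<Tv,v\>/\|v\|^2$ separately. Since $T^F$ is self-adjoint, Rayleigh's theorem applies and gives $\lambda_0(T^F) = \inf \<T^F w, w\>/\|w\|^2$ over non-zero $w \in \mathcal{D}(T^F)$. Because $T^F$ extends $T$ and $\mathcal{D}(T) \subset \mathcal{D}(T^F)$, restricting the infimum to the smaller set $\mathcal{D}(T)$ can only enlarge it, which immediately yields $\lambda_0(T^F) \leq \mu$.

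The real work lies in the reverse inequality, for which I would approximate an arbitrary $u \in \mathcal{D}(T^F)$ by elements of $\mathcal{D}(T)$ in such a way that both numerator and denominator of the Rayleigh quotient are preserved in the limit. By construction $\mathcal{D}(T^F) \subset \mathsf{H}_T$ and $\mathcal{D}(T)$ is dense in $\mathsf{H}_T$, so I pick a sequence $v_n \in \mathcal{D}(T)$ with $v_n \to u$ in $\mathsf{H}_T$. Continuity of the inclusion $\mathsf{H}_T \hookrightarrow \mathsf{H}$ gives $\|v_n\| \to \|u\|$, and continuity of the inner product on $\mathsf{H}_T$ gives $\<v_n, v_n\>_T \to \<u,u\>_T$. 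Hence $\<Tv_n, v_n\> = \<v_n, v_n\>_T - (1-c)\|v_n\|^2$ converges to $\<u, u\>_T - (1-c)\|u\|^2$.

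It then remains to identify this limit with $\<T^F u, u\>$. For $u \in \mathcal{D}(T^F) = \mathsf{H}_T \cap \mathcal{D}(T^*)$ and any $v \in \mathcal{D}(T)$, the definition of the adjoint together with the defining formula for $\<\cdot, \cdot\>_T$ yield $\<T^F u, v\> = \<T^* u, v\> = \<u, Tv\> = \<u, v\>_T - (1-c)\<u, v\>$. Applying this with $v = v_n$ and letting $n \to \infty$, the left-hand side converges to $\<T^F u, u\>$ by continuity of the ordinary inner product, while the right-hand side converges to $\<u, u\>_T - (1-c)\|u\|^2$ by the previous paragraph. Dividing by $\|v_n\|^2 \to \|u\|^2$ shows $\<Tv_n, v_n\>/\|v_n\|^2 \to \<T^F u, u\>/\|u\|^2$, so $\mu \leq \<T^F u, u\>/\|u\|^2$ for every non-zero $u \in \mathcal{D}(T^F)$, and taking the infimum in $u$ gives $\mu \leq \lambda_0(T^F)$.

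The main obstacle I anticipate is the compatibility argument in the last step, namely ensuring that the extension by continuity of the sesquilinear form $\<\cdot, \cdot\>_T$ from $\mathcal{D}(T) \times \mathcal{D}(T)$ to $\mathsf{H}_T \times \mathsf{H}_T$ really does agree with the adjoint pairing $\<T^F u, \cdot\>$; once this is in place, all other steps reduce to routine density and continuity. No additional structure of $T$ beyond lower-boundedness and symmetry is needed.
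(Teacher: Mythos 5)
Your argument is correct and is precisely the standard reasoning the paper compresses into the single remark that ``Rayleigh's theorem implies'' the formula: the inequality $\lambda_0(T^F)\leq\inf_v \< Tv,v\>/\|v\|^2$ is immediate because $T^F$ extends $T$, and the reverse inequality follows from density of $\mathcal{D}(T)$ in $\mathsf{H}_T$ together with the compatibility of the extended form $\<\cdot,\cdot\>_T$ with the adjoint pairing, which you correctly identify as the only point requiring care (and which holds since both sides of your identity are $\mathsf{H}_T$-continuous in $u$ and agree on the dense subspace $\mathcal{D}(T)$). No gaps.
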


\subsection{Laplace type operators}

Throughout this paper manifolds are assumed to be connected. However, their boundaries may be non-connected. Moreover, the term "manifold with boundary" refers to a manifold with non-empty, smooth boundary. 

Let $M$ be a Riemannian manifold with possibly empty boundary. A \textit{symmetric Laplace type operator} $L$ on $M$ is an operator of the form $L = \Delta - 2 \grad \ln \vf + V$, where $\Delta$ is the Laplacian, $\vf, V \in C^{\infty}(M)$ and $\vf > 0$. In the case where $\vf = 1$, $L$ is a \textit{Schr\"odinger operator}, while $L$ is called a \textit{diffusion operator} if $V = 0$. Denote by $L^2_{\vf}(M)$ the $L^2$-space of $M$ with respect to the measure $\vf^2 d\vol$, where $d \vol$ is the measure induced by the Riemannian metric.
It is worth to point out that the isometric isomorphism $m_{\vf} \colon L^2_{\vf}(M) \to L^{2}(M)$ defined by $m_{\vf}f=\vf f$, intertwines $L$ with the Schr\"odinger operator
\[
S = m_{\vf} \circ L \circ m_{\vf}^{-1} = \Delta + V - \frac{\Delta \vf}{\vf}.
\]
Given a non-zero $f \in C^{\infty}_c(M)$, set
\[
\mathcal{R}_L(f) = \frac{\int_M ( \| \grad f \|^2 + V f^2) \vf^2}{\int_M f^2 \vf^2}.
\]
In the case of the Laplacian (that is, $\vf = 1$ and $V =0$), we denote this quantity by $\mathcal{R}(f)$.

If $M$ does not have boundary, then
\[
L \colon C^{\infty}_c(M) \subset L^2_{\vf}(M) \to L^2_{\vf}(M)
\]
is densely-defined and symmetric. If it is bounded from below, we denote by $\lambda_0(L)$ the bottom of the spectrum of its Friedrichs extension. From \Cref{bottom of Friedrichs extension} and the divergence formula, we obtain the following expression for $\lambda_0(L)$.

\begin{proposition}\label{bottom no boundary}
The bottom of the spectrum of $L$ is given by
\[
\lambda_0(L) = \inf_{f} \frac{\< L f , f \>_{L^2_{\vf}(M)}}{\| f \|^2_{L^2_{\vf}(M)}} = \inf_{f} \mathcal{R}_L(f),
\]
where the infimum is taken over all non-zero $f \in C^{\infty}_c(M)$.
\end{proposition}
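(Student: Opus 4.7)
The plan is to reduce the statement to two independent pieces: Rayleigh's theorem applied to the Friedrichs extension for the first equality, and a divergence-formula computation for the second. Since the paper has just stated that $L$ is densely-defined, symmetric, and assumed bounded from below as an operator on $C^\infty_c(M) \subset L^2_\vf(M)$, Proposition \ref{bottom of Friedrichs extension} immediately yields
\[
\lambda_0(L) = \inf_{f} \frac{\<Lf,f\>_{L^2_\vf(M)}}{\|f\|^2_{L^2_\vf(M)}},
\]
the infimum taken over non-zero $f \in C^\infty_c(M)$. This disposes of the first equality with no further work.

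For the second equality, I would rewrite $L$ in divergence form with respect to the weighted measure $\vf^2\, d\vol$. A direct computation using $\grad \ln \vf = \vf^{-1}\grad \vf$ shows that
\[
Lf = -\vf^{-2}\diver(\vf^2 \grad f) + V f
\]
for every $f \in C^\infty(M)$, where the sign convention $\Delta = -\diver \grad$ is used. Pairing this expression with $f$ in $L^2_\vf(M)$ and applying the divergence theorem gives
\[
\<Lf,f\>_{L^2_\vf(M)} = -\int_M f \,\diver(\vf^2 \grad f) + \int_M V f^2 \vf^2 = \int_M (\|\grad f\|^2 + V f^2)\,\vf^2,
\]
which is precisely $\mathcal{R}_L(f)\cdot \|f\|^2_{L^2_\vf(M)}$. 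The boundary term in the divergence formula vanishes because $M$ has empty boundary and $f$ is compactly supported, so no additional hypotheses are needed.

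Combining the two equalities yields the claimed formula. The only point requiring minor care is the justification of the integration by parts: since $f \in C^\infty_c(M)$, the vector field $f\vf^2 \grad f$ is compactly supported and smooth, so the divergence theorem applies on the non-compact manifold $M$ without issue. In this sense there is no real obstacle; the content of the proposition lies almost entirely in the algebraic identity above, while the operator-theoretic input is outsourced to Proposition \ref{bottom of Friedrichs extension}.
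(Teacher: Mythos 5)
Your proof is correct and follows exactly the route the paper intends: the paper justifies this proposition in one line by citing Proposition~\ref{bottom of Friedrichs extension} together with the divergence formula, which is precisely your two-step argument. Your divergence-form identity $Lf = -\vf^{-2}\diver(\vf^2\grad f)+Vf$ and the resulting integration by parts are the correct way to fill in that one line.
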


For the rest of this subsection, suppose that $M$ has non-empty boundary. We begin our discussion with the Dirichlet spectrum of $L$. The operator
\begin{equation}\label{Dirichlet}
L  \colon \{ f \in C_c^{\infty}(M) : f = 0 \text{ on } \partial M \} \subset L^{2}_{\vf}(M) \to L^{2}_{\vf}(M)
\end{equation}
is densely-defined and symmetric. If it is bounded from below, we denote by $L^D$ its Friedrichs extension. It is noteworthy that if $M$ is complete, then this operator is essentially self-adjoint (cf. for instance \cite[Theorem A.24]{BP}); that is, $L^D$ is the closure of this operator and actually coincides with the adjoint of this operator. The spectrum of $L^D$ is called the \textit{Dirichlet spectrum} of $L$. The following expression for the bottom $\lambda_0^D(L)$ of the Dirichlet spectrum is an immediate consequence of \Cref{bottom of Friedrichs extension} and the divergence formula.

\begin{proposition}\label{bottom Dirichlet}
The bottom of the Dirichlet spectrum of $L$ is given by
\[
\lambda_0^D(L) = \inf_{f} \frac{\< L f , f \>_{L^2_{\vf}(M)}}{\| f \|^2_{L^2_{\vf}(M)}} = \inf_{f} \mathcal{R}_L(f),
\]
where the infimum is taken over all non-zero $f \in C^{\infty}_c(M)$ with $f = 0 $ on $\partial M$.
\end{proposition}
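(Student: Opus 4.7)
The plan is to obtain this proposition as a direct application of Proposition \ref{bottom of Friedrichs extension} combined with a single integration by parts, exactly paralleling the proof of Proposition \ref{bottom no boundary}. By hypothesis, the operator in \eqref{Dirichlet} is densely defined, symmetric, and bounded below on $L^2_\vf(M)$, so Proposition \ref{bottom of Friedrichs extension} applies verbatim to give the first equality
\[
\lambda_0^D(L) = \inf_{f} \frac{\< Lf, f \>_{L^2_\vf(M)}}{\|f\|^2_{L^2_\vf(M)}},
\]
where the infimum is taken over non-zero $f \in C_c^\infty(M)$ vanishing on $\partial M$.

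For the second equality, I would exploit the observation that $L$ can be put in weighted divergence form: using $\grad \ln \vf = \vf^{-1}\grad \vf$ one checks that $Lf = -\vf^{-2}\diver(\vf^2 \grad f) + Vf$, which is the identity responsible for the symmetry of $L$ in the measure $\vf^2\, d\vol$. Pairing this with $f\vf^2$ and applying the divergence formula on $M$ yields
\[
\<Lf, f\>_{L^2_\vf(M)} = \int_M \vf^2 \|\grad f\|^2\, d\vol - \int_{\partial M} f \vf^2\, \nu(f)\, d\sigma + \int_M V f^2 \vf^2\, d\vol.
\]
The Dirichlet condition $f|_{\partial M} = 0$ kills the boundary integral, leaving $\<Lf,f\>_{L^2_\vf(M)} = \int_M(\|\grad f\|^2 + Vf^2)\vf^2\, d\vol = \mathcal{R}_L(f)\,\|f\|^2_{L^2_\vf(M)}$, which is the second equality.

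There is no substantial obstacle here; the only point requiring attention is the vanishing of the boundary term in the integration by parts, which is precisely ensured by the Dirichlet condition built into the domain of the operator in \eqref{Dirichlet}. The argument is thus the natural adaptation of Proposition \ref{bottom no boundary} to the boundary case, the divergence formula doing all the work that the compact support of the test functions does in the closed case.
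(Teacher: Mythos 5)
Your proof is correct and follows exactly the route the paper intends: the paper states this proposition as ``an immediate consequence of \Cref{bottom of Friedrichs extension} and the divergence formula,'' and your argument simply fills in those two steps, with the Dirichlet condition killing the boundary term precisely as you note. The weighted divergence form $Lf = -\vf^{-2}\diver(\vf^{2}\grad f)+Vf$ you use is consistent with the paper's sign convention for $\Delta$, so there is nothing to add.
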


It is not difficult to verify that the bottom of the Dirichlet spectrum of $L$ coincides with the bottom of the spectrum of $L$ considered on the interior of $M$.

We are also interested in the \textit{Neumann spectrum} of $L$, which is the spectrum of the Friedrichs extension $L^N$ of the operator
\[
L \colon \{ f \in C_c^{\infty}(M) : \nu(f) = 0 \text{ on } \partial M \} \subset L^{2}_{\vf}(M) \to L^{2}_{\vf}(M),
\]
in the case where this operator is bounded from below, where $\nu$ stands for the outward pointing unit normal to $\partial M$. The following expression for the bottom $\lambda_0^N(L)$ of the Neumann spectrum may be found for instance in \cite[Proposition 3.2]{Mine2} in the case of Schr\"odinger operators. This readily extends the to symmetric Laplace type operators, by passing to the corresponding Schr\"odinger operator as described in the beginning of this subsection.

\begin{proposition}\label{bottom Neumann}
The bottom of the Neumann spectrum of $L$ is given by
\[
\lambda_0^N(L) = \inf_{f} \mathcal{R}_L(f),
\]
where the infimum is taken over all non-zero $f\in C^{\infty}_c(M)$.
\end{proposition}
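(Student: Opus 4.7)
My plan is to invoke Proposition \ref{bottom of Friedrichs extension} for the Friedrichs extension $L^N$, convert the operator Rayleigh quotient $\< Lf, f \>_{L^2_{\vf}(M)}/\|f\|^2_{L^2_{\vf}(M)}$ into the geometric quantity $\mathcal{R}_L(f)$ via the divergence theorem, and finally remove the Neumann constraint from the competing test functions by reducing to the already-established Schr\"odinger case \cite[Proposition 3.2]{Mine2} through the intertwining $L = m_{\vf}^{-1} \circ S \circ m_{\vf}$.

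First I would apply Proposition \ref{bottom of Friedrichs extension} to $L$ on the symmetric domain $\mathcal{D} = \{ f \in C^{\infty}_c(M) : \nu(f) = 0 \text{ on } \partial M \}$, obtaining
\[
	\lambda_0^N(L) = \inf_{f \in \mathcal{D}\setminus\{0\}} \frac{\< Lf, f \>_{L^2_{\vf}(M)}}{\|f\|^2_{L^2_{\vf}(M)}}.
\]
Writing $L = \Delta - 2\grad\ln\vf + V$ and integrating $\int_M (Lf) f\vf^2$ by parts against the weighted measure, the interior $\grad\vf$-contributions produced by the Laplacian exactly cancel those coming from the first-order term, while the only surviving boundary contribution $-\int_{\partial M}\nu(f) f\vf^2$ vanishes on $\mathcal{D}$. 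Therefore $\< Lf, f \>_{L^2_{\vf}(M)} = \int_M(\|\grad f\|^2 + Vf^2)\vf^2$, and hence $\lambda_0^N(L) = \inf_{\mathcal{D}}\mathcal{R}_L(f)$.

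Since $\mathcal{D} \subset C^{\infty}_c(M)$, the inequality $\inf_{C^{\infty}_c(M)}\mathcal{R}_L(f) \leq \lambda_0^N(L)$ is immediate. For the reverse, I would pass to the corresponding Schr\"odinger operator $S = \Delta + V - \Delta\vf/\vf$ via the substitution $g = \vf f$, which is a bijection $C^{\infty}_c(M) \to C^{\infty}_c(M)$ since $\vf > 0$. A short divergence-theorem computation shows that
\[
	\int_M \bigl(\|\grad g\|^2 + (V - \Delta\vf/\vf)g^2\bigr) - \int_M (\|\grad f\|^2 + V f^2)\vf^2 = \int_{\partial M} \vf f^2 \, \nu(\vf),
\]
and that this boundary discrepancy is precisely the Robin-type boundary term that arises in the Friedrichs form of $S$ on the image $m_{\vf}\mathcal{D}$. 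Invoking \cite[Proposition 3.2]{Mine2} to characterize the bottom of the relevant Schr\"odinger spectrum over \emph{all} of $C^{\infty}_c(M)$ and translating back via $m_{\vf}^{-1}$ then furnishes the required inequality $\inf_{C^{\infty}_c(M)}\mathcal{R}_L(f) \geq \lambda_0^N(L)$.

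The main obstacle is this last step: the multiplication $m_{\vf}$ does not send the Neumann condition for $L$ to the Neumann condition for $S$, but rather to a Robin-type condition $\nu(g) = (\nu(\vf)/\vf)g$, so one cannot simply identify $\lambda_0^N(L)$ with $\lambda_0^N(S)$. The content of the argument is that the boundary contribution generated by the change of variable in $\mathcal{R}$ exactly compensates the Robin boundary term in the Schr\"odinger form, so that the statement of \cite[Proposition 3.2]{Mine2} transfers cleanly through the intertwining to yield the desired characterization.
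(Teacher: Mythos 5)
Your route is, at bottom, the same one the paper takes: it quotes \cite[Proposition 3.2]{Mine2} for Schr\"odinger operators and extends it to Laplace type operators ``by passing to the corresponding Schr\"odinger operator''. Your first steps are correct: applying \Cref{bottom of Friedrichs extension} on the Neumann domain $\mathcal{D}=\{f\in C^{\infty}_c(M):\nu(f)=0\}$, the integration by parts giving $\< Lf,f\>_{L^2_{\vf}(M)}=\mathcal{R}_L(f)\,\|f\|^2_{L^2_{\vf}(M)}$ for $f\in\mathcal{D}$, and the easy inequality $\inf_{C^{\infty}_c(M)}\mathcal{R}_L\le\lambda_0^N(L)$. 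You also compute the boundary discrepancy correctly and rightly observe that $m_{\vf}$ turns the Neumann condition for $L$ into a Robin condition for $S$.

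The way you close the argument, however, has a genuine gap. After substituting $g=\vf f$, the quantity to control is
\[
\mathcal{R}_L(f)=\frac{\int_M\bigl(\|\grad g\|^2+(V-\Delta\vf/\vf)g^2\bigr)-\int_{\partial M}(\nu(\vf)/\vf)\,g^2}{\int_M g^2},
\]
and the reverse inequality you need says that the infimum of this \emph{Robin} functional over all $g\in C^{\infty}_c(M)$ is not smaller than its infimum over $m_{\vf}\mathcal{D}$. But \cite[Proposition 3.2]{Mine2} is a statement about the \emph{Neumann} functional $\mathcal{R}_S(g)$, with no boundary integral; it says nothing about the functional carrying the extra term $-\int_{\partial M}(\nu(\vf)/\vf)g^2$, so its statement cannot be ``translated back via $m_{\vf}^{-1}$''. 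The ``exact compensation'' you invoke only re-derives the identity $\< Lf,f\>_{L^2_{\vf}(M)}=\< Sg,g\>_{L^2(M)}$ on $\mathcal{D}$, which is automatic since $m_{\vf}$ is a unitary intertwiner; it does not remove the boundary condition from the competing test functions, and removing that condition is the entire content of the proposition. What transfers is the \emph{proof} of \cite[Proposition 3.2]{Mine2}, not its statement: given $f\in C^{\infty}_c(M)$, modify it in a collar of width $\epsilon$ about $\partial M$ (e.g.\ replace $f(x,t)$ by $f(x,\chi_\epsilon(t))$ in Fermi coordinates, with $\chi_\epsilon$ flat at $t=0$) to obtain $f_\epsilon\in\mathcal{D}$ with $\mathcal{R}_L(f_\epsilon)\to\mathcal{R}_L(f)$; this local approximation is insensitive to the smooth positive weight $\vf$ and the potential $V$ and yields the reverse inequality directly, with no detour through $S$. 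Either supply that approximation for $\mathcal{R}_L$, or prove the Robin analogue of \cite[Proposition 3.2]{Mine2}; as written, the last step does not follow from what you cite.
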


It should be emphasized that the test functions in the preceding proposition are not required to satisfy any boundary condition.

To simplify our notation, in the case of the Laplacian, we set $\lambda_0(M) = \lambda_0(\Delta)$, $\lambda_0^D(M) = \lambda_0^D(\Delta)$ and $\lambda_0^N(M) = \lambda_0^N(\Delta)$.

\subsection{Sobolev spaces on manifolds with bounded geometry}

Let $M$ be a Riemannian manifold with boundary and $\nu$ the outward pointing unit normal to $\partial M$. Denote by $R$ the curvature tensor of $M$ and by $\alpha$ the second fundamental form of $\partial M$. The following definition may be found in \cite{GN,MR1817852}\footnote{The definition in \cite{GN} is equivalent to the one in \cite{MR1817852}, as pointed out in \cite[p. 12]{GN}.}.
\begin{definition}\label{manifold with bounded geometry}
We say that $M$ has \textit{bounded geometry} if the following hold:
\begin{enumerate}[topsep=0pt,itemsep=-1pt,partopsep=1ex,parsep=0.5ex,leftmargin=*, label=(\roman*), align=left, labelsep=0em]
	\item there exists $r > 0$ such that
	\[
	\exp \colon \partial M \times [0,r) \to M, \text{ } (x,t) \mapsto \exp_x(-t\nu)
	\]
	is a diffeomorphism onto its image,
	\item the injectivity radius of $\partial M$ (as a manifold endowed with the induced Riemannian metric) is positive,
	\item there is $r_0 > 0$ such that for any $x \in M \smallsetminus B(\partial M, r_0)$, the restriction of $\exp_x$ to $B(0,r_0) \subset T_x M$ is a diffeomorphism onto its image,
	\item for any $k \in \N \cup\{0\}$ there exists $C_k > 0$ such that $\| \nabla^k R \| \leq C_k$ and $\| \nabla^k \alpha \| \leq C_k$.
\end{enumerate}
\end{definition}

We begin our discussion on such manifolds with the following observation.

\begin{lemma}\label{extension}
Let $M$ be a Riemannian manifold with boundary and bounded geometry. Then the outward pointing unit normal $\nu$ to $\partial M$ can be extended to a bounded smooth vector field $N$ on $M$ with $\nabla N$ bounded.
\end{lemma}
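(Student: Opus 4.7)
The plan is to build $N$ in two pieces: first, use the collar neighborhood from condition (i) of \Cref{manifold with bounded geometry} to extend $\nu$ smoothly via parallel transport along inward normal geodesics, and then use a cutoff function in the collar variable to extend by zero to the rest of $M$. Concretely, let $r > 0$ be as in (i) and let $\Phi \colon \partial M \times [0,r) \to U$, $(x,t) \mapsto \exp_x(-t\nu)$, be the resulting diffeomorphism onto a tubular neighborhood $U$ of $\partial M$. On $U$, define $\tilde N$ by $\tilde N(\Phi(x,t)) = -\gamma_x'(t)$, where $\gamma_x(t) = \exp_x(-t\nu)$. Then $\tilde N$ is a smooth unit vector field on $U$ with $\tilde N|_{\partial M} = \nu$. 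Pick $\chi \colon [0,r) \to [0,1]$ smooth with $\chi \equiv 1$ on $[0,r/3]$, $\chi \equiv 0$ on $[r/2, r)$, and bounded derivative, and set $N := (\chi \circ t)\tilde N$ on $U$, $N := 0$ on $M \setminus U$. Clearly $N$ is smooth, bounded, and extends $\nu$.

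The remaining task is to bound $\nabla N$. On the support of $\chi$, we have $\nabla N = (\chi' \circ t)\, dt \otimes \tilde N + (\chi \circ t)\, \nabla \tilde N$. The first summand is bounded since $|dt| \leq 1$ and $\chi'$ is bounded. For the second, I would argue that $\nabla_{\tilde N}\tilde N = 0$ because the integral curves of $\tilde N$ are the reparametrized unit-speed normal geodesics, while for $X$ tangent to a parallel hypersurface $\Phi(\partial M \times \{t\})$, the map $X \mapsto \nabla_X \tilde N$ is (up to sign) the shape operator $A_t$ of that hypersurface. The Riccati equation $A_t' = A_t^2 + R(\cdot, \tilde N)\tilde N$ along each normal geodesic, together with the initial condition $A_0 = -\alpha$ at $\partial M$, then gives a uniform bound for $A_t$ on $t \in [0,r/2]$ in terms of the bounds on $\alpha$ and $R$ provided by condition (iv). Consequently $\nabla \tilde N$ is bounded on $\supp \chi$, and the bound on $\nabla N$ follows.

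The only delicate point is producing the uniform bound on the shape operators of the parallel hypersurfaces from the bounds on $\alpha$ and $R$ via the Riccati equation on the interval $[0, r/2]$; once this is in hand, everything else is routine. Note that conditions (ii) and (iii) of \Cref{manifold with bounded geometry} are not needed for this particular statement, which is purely local in the collar direction.
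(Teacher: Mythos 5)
Your proof is essentially the paper's: the paper likewise extends $\nu$ on the collar from condition (i) to $V=\exp_*(-\partial/\partial t)=-\grad d_{\partial M}$, bounds $\nabla V$ by Riccati comparison (citing Eschenburg--Heintze, using the bounds on $R$ and on the principal curvatures of $\partial M$), and multiplies by a cutoff in the distance to $\partial M$. The only place the paper is more careful is the interval on which the comparison argument applies: from the bounds on $\alpha$ and $R$ alone, the Riccati equation controls the shape operators $A_t$ only up to the blow-up time of the comparison scalar equation, which may be smaller than $r/2$, so the paper cuts off inside $B(\partial M,\delta)$ for a suitable $\delta>0$; your uniform bound on all of $[0,r/2]$ would need the additional observation that condition (i) rules out focal points of $\partial M$ up to distance $r$ (or, more simply, you should shrink the support of $\chi$ as the paper does).
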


\begin{proof}
Denoting by $d_{\partial M}$ the distance to $\partial M$, using the diffeomorphism from (i), extend $\nu$ to the vector field
\[
V = \exp_*(-\frac{\partial}{\partial t}) = - \grad d_{\partial M}
\]
defined in $B(\partial M, r)$. It follows from standard comparison theorems (cf. for instance \cite{Eschenburg-Heintze}) that there exists $\delta > 0$ such that $\nabla V$ is bounded in $B(\partial M, \delta)$, keeping in mind that the sectional curvature of $M$ is bounded and so are the principal curvatures of $\partial M$. Consider now a smooth $\chi \colon [0 +\infty) \to \R$ with $\chi(x) = 1$ for $x \leq \delta/4$ and $\chi(x) = 0$ for $x \geq \delta/2$. It is immediate to verify that $N = (\chi \circ d_{\partial M}) V$ is a bounded smooth vector field on $M$ with $\nabla N$ bounded, which coincides with $\nu$ on $\partial M$. \qed
\end{proof}\medskip

The next theorem is essentially a special version of \cite[Theorem 1.1]{GN}, where we point out a difference in the form of the elliptic estimates, in the case where the bottom of the Dirichlet spectrum of the Laplacian is positive.

\begin{theorem}\label{Elliptic estimates}
Let $M$ be a Riemannian manifold with boundary and bounded geometry, such that $\lambda_0^D(M) > 0$. Then any $f \in \mathcal{D}(\Delta^D)$ belongs to $H^{2}(M)$. More precisely, there exists $C>0$ such that
\[
\| f \|_{H^{2}(M)} \leq C \| \Delta^D f \|_{L^{2}(M)}
\]
for any $f \in \mathcal{D}(\Delta^D)$.
\end{theorem}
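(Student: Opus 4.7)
The plan is to deduce this from the general elliptic regularity theorem of Große–Nistor and a simple spectral estimate that exploits the positivity of $\lambda_0^D(M)$.

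First, I would invoke \cite[Theorem 1.1]{GN}, which, on a manifold with boundary and bounded geometry, yields the inclusion $\mathcal{D}(\Delta^D) \subset H^2(M)$ together with the standard a priori bound
\[
\|f\|_{H^2(M)} \leq C_1\bigl(\|\Delta^D f\|_{L^2(M)} + \|f\|_{L^2(M)}\bigr)
\]
for every $f \in \mathcal{D}(\Delta^D)$, with a constant $C_1 > 0$ depending only on the bounded geometry data. The content of the theorem I want to prove, compared with the Große–Nistor statement, is therefore just the removal of the lower-order term $\|f\|_{L^2(M)}$ from the right-hand side.

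Next, I would use the assumption $\lambda_0^D(M) > 0$ to absorb that term. By \Cref{bottom Dirichlet}, together with a standard density argument extending the Rayleigh quotient bound from the core $\{f \in C^\infty_c(M) : f = 0 \text{ on } \partial M\}$ to the full Friedrichs domain $\mathcal{D}(\Delta^D)$, one has
\[
\lambda_0^D(M)\,\|f\|_{L^2(M)}^2 \leq \langle \Delta^D f, f\rangle_{L^2(M)}
\]
for every $f \in \mathcal{D}(\Delta^D)$. Applying Cauchy–Schwarz on the right and dividing by $\|f\|_{L^2(M)}$ (the case $f = 0$ being trivial) gives the Poincaré-type inequality
\[
\|f\|_{L^2(M)} \leq \frac{1}{\lambda_0^D(M)}\,\|\Delta^D f\|_{L^2(M)}.
\]

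Finally, substituting this bound back into the Große–Nistor estimate yields
\[
\|f\|_{H^2(M)} \leq C_1\left(1 + \frac{1}{\lambda_0^D(M)}\right)\|\Delta^D f\|_{L^2(M)},
\]
which is the desired inequality with $C = C_1(1 + \lambda_0^D(M)^{-1})$.

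The main obstacle is really external: the work is carried by \cite[Theorem 1.1]{GN}, whose proof requires a careful construction of bounded-geometry-compatible charts and a global patching of local $H^2$-estimates. Once that is in hand, the improvement under the spectral gap hypothesis is a routine two-line spectral argument; the only mild subtlety is justifying the extension of Rayleigh's inequality from the symmetric operator's core to the Friedrichs domain, which is standard since $\mathcal{D}(\Delta^D)$ is contained in the form domain on which the quadratic form inherits the same lower bound by continuity.
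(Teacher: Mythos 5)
Your overall strategy is the same as the paper's: quote the Gro\ss{}e--Nistor elliptic estimate and absorb the lower-order term using $\lambda_0^D(M) > 0$. There are, however, two points where your write-up is weaker than what is needed.

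First, the form of the a priori estimate. The version of \cite[Theorem 1.1]{GN} used in the paper reads
\[
\| f \|_{H^2(M)} \leq C\bigl( \| \Delta f \|_{L^{2}(M)} + \| f \|_{H^{1}(M)} \bigr),
\]
with the full $H^1$-norm, not merely $\| f \|_{L^2(M)}$, as the lower-order term. Your two-line spectral argument only absorbs the $L^2$-part; it says nothing about $\| \grad f \|_{L^2(M)}$. If you want to use the $L^2$-only form you must either justify it (e.g.\ by an interpolation inequality valid on manifolds with bounded geometry, which you do not invoke), or you must add the one extra step the paper uses: for $f$ in the core, integration by parts and Cauchy--Schwarz give
\[
\int_{M} \| \grad f\|^2 = \< \Delta f , f \>_{L^2(M)} \leq \| \Delta f \|_{L^{2}(M)}  \| f \|_{L^2(M)} \leq \lambda_0^D(M)^{-1} \| \Delta f \|_{L^{2}(M)}^2,
\]
which together with your Poincar\'e-type bound controls $\| f \|_{H^1(M)}$ by $\| \Delta f \|_{L^2(M)}$. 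This is a small but genuine gap as written.

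Second, you assert that \cite[Theorem 1.1]{GN} directly yields $\mathcal{D}(\Delta^D) \subset H^2(M)$ and the estimate on the whole Friedrichs domain. The paper instead proves the inequality only for $f \in C^{\infty}_c(M)$ vanishing on $\partial M$ and then uses essential self-adjointness of the Dirichlet Laplacian: choosing $f_n$ in the core with $f_n \to f$ and $\Delta f_n \to \Delta^D f$ in $L^2(M)$, the estimate shows $(f_n)$ is Cauchy in $H^2(M)$, whence $f \in H^2(M)$ and the bound passes to the limit. You do gesture at a density argument for the Rayleigh-quotient inequality, but the analogous (and more essential) density step for membership in $H^2(M)$ should be made explicit, since the elliptic estimate is a priori only available on the smooth core. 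Both issues are repairable with the ingredients you already have, so the proof is salvageable, but as stated it does not close.
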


\begin{proof}
Bearing in mind that the Laplacian on $M$ regarded as in (\ref{Dirichlet}) is essentially self-adjoint, we readily see that for any $f \in \mathcal{D}(\Delta^D)$ there exists $(f_n)_{n \in \N} \subset C^{\infty}_{c}(M)$ with $f_n = 0$ on $\partial M$ such that $f_n \rightarrow f$ and $\Delta f_n \rightarrow \Delta^D f$ in $L^2(M)$.
Therefore, it suffices to establish the asserted estimate for $f \in C^{\infty}_{c}(M)$ with $f=0$ on $\partial M$. Indeed, this gives that $(f_{n})_{n \in \N}$ is Cauchy in $H^2(M)$, and thus, converges to $f$ in $H^{2}(M)$.

It follows from \cite[Theorem 1.1]{GN} that there exists $C>0$ such that
\[
\| f \|_{H^2(M)} \leq C( \| \Delta f \|_{L^{2}(M)} + \| f \|_{H^{1}(M)} )
\]
for any $f \in C^{\infty}_{c}(M)$ with $f=0$ on $\partial M$. If, in addition, $f$ is not identically zero, we readily see from \Cref{bottom Dirichlet} that
\[
\| \Delta f \|_{L^{2}(M)} \geq \frac{\< \Delta f , f \>_{L^2(M)}}{\| f \|_{L^{2}(M)}} = \mathcal{R}(f) \| f \|_{L^{2}(M)} \geq \lambda_0^D(M) \| f \|_{L^2(M)}.
\]
Since $\lambda_0^D(M) > 0$, 
this yields that
\[
\int_{M} \| \grad f\|^2 = \< \Delta f , f \>_{L^2(M)} \leq \| \Delta f \|_{L^{2}(M)}  \| f \|_{L^2(M)} \leq \lambda_0^D(M)^{-1} \| \Delta f \|_{L^{2}(M)}^2.
\]
The proof is completed by combining the above inequalities. \qed
\end{proof}\medskip

We will also exploit the following trace theorem, which is a special version of \cite[Theorem 3.15]{GN}.

\begin{theorem}\label{Trace thm}
Let $M$ be a Riemannian manifold with boundary and bounded geometry. Then the restriction to the boundary $\res \colon C^{\infty}_c(M) \to C^{\infty}_c(\partial M)$ extends to a continuous $\res \colon H^{1}(M) \to L^2(\partial M)$. 
\end{theorem}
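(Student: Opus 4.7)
The plan is to work in a collar neighborhood of the boundary, reduce the trace inequality to a one-dimensional estimate in the inward normal direction, and then invoke the density of $C_c^\infty(M)$ in $H^1(M)$ to extend the restriction map by continuity.

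First, I would establish the bound $\|f|_{\partial M}\|_{L^2(\partial M)} \leq C\|f\|_{H^1(M)}$ for every $f \in C^\infty_c(M)$. Using condition (i) of \Cref{manifold with bounded geometry}, parametrize the tubular neighborhood $U = B(\partial M, r)$ by the normal exponential map $\Phi(x,t) = \exp_x(-t\nu_x)$. In these coordinates the Riemannian metric has the warped form $g_t + dt^2$ for a one-parameter family of metrics $g_t$ on $\partial M$, and the volume form reads $J(x,t)\, d\vol_{\partial M}(x)\, dt$ for some smooth positive Jacobian $J$. Because the sectional curvature of $M$ and the principal curvatures of $\partial M$ are uniformly bounded by condition (iv), Jacobi field comparison yields some $r_1 \in (0, r]$ and constants $0 < c \leq J \leq C$ on $\partial M \times [0, r_1]$, as in the argument used for \Cref{extension}.

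Now fix a smooth cutoff $\eta \colon [0, \infty) \to [0, 1]$ with $\eta(0) = 1$ and $\supp \eta \subset [0, r_1)$. For $f \in C_c^\infty(M)$, the fundamental theorem of calculus in the $t$-variable gives
\[
f(x, 0) = -\int_0^{r_1} \bigl(\eta'(t) f(x, t) + \eta(t) \partial_t f(x, t)\bigr)\, dt.
\]
Squaring, applying Cauchy--Schwarz in $t$, integrating over $\partial M$, and using $|\partial_t f| \leq \|\grad f\|$ together with the two-sided bound on $J$ to convert between the product measure on $\partial M \times [0, r_1]$ and the ambient volume, I obtain
\[
\int_{\partial M} f(\cdot, 0)^2\, d\vol_{\partial M} \leq C' \int_U \bigl(|f|^2 + \|\grad f\|^2\bigr)\, d\vol \leq C' \|f\|_{H^1(M)}^2.
\]
To conclude, I would invoke the density of $C_c^\infty(M)$ in $H^1(M)$, which is available in this setting via a uniformly locally finite cover by bounded-geometry charts, a subordinate partition of unity with uniformly bounded gradients, and a distance-function cutoff at infinity. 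Granted density, the restriction map extends uniquely by continuity to a bounded operator $H^1(M) \to L^2(\partial M)$.

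The main obstacle I anticipate is the uniform Jacobian bound $0 < c \leq J \leq C$: its proof requires comparison-theoretic control of Jacobi fields along inward normal geodesics, where both the curvature bound on $R$ and the bound on $\alpha$ (serving as the initial condition for those Jacobi fields) are indispensable. A secondary concern is the density of $C_c^\infty(M)$ in $H^1(M)$, which is nontrivial on non-complete manifolds but is standard in the bounded geometry framework.
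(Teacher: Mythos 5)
The paper offers no proof of this statement at all: it is quoted as a special case of \cite[Theorem 3.15]{GN}, so any self-contained argument is necessarily a different route from the paper's. Your collar-neighborhood argument is the standard proof of the trace inequality and is essentially correct. The identity
\[
f(x,0) = -\int_0^{r_1} \bigl(\eta'(t) f(x,t) + \eta(t)\,\partial_t f(x,t)\bigr)\, dt,
\]
Cauchy--Schwarz in $t$, and the inequality $|\partial_t f| \leq \|\grad f\|$ (valid because $\partial_t$ is the unit normal to the level sets of $d_{\partial M}$, by the generalized Gauss lemma) give the asserted bound once the Jacobian is uniformly pinched. The two points you flag are indeed exactly where bounded geometry enters, and both are available here: the two-sided bound $0 < c \leq J \leq C$ on a possibly smaller collar $\partial M \times [0,r_1]$ follows from Riccati comparison using the bounds on $R$ and $\alpha$ from condition (iv) of \Cref{manifold with bounded geometry}, which is precisely the argument the paper itself runs in \Cref{extension} (citing Eschenburg--Heintze); and the density of $C^{\infty}_c(M)$ in $H^{1}(M)$ is part of the basic Sobolev package for manifolds with boundary and bounded geometry in \cite{GN} (such manifolds are complete, a fact the paper uses elsewhere, so the cutoff at infinity is unproblematic). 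What your approach buys is a transparent, elementary proof that isolates the trace estimate from the heavier machinery of the reference; what it costs is that the density statement you must invoke at the end is itself one of the nontrivial results of that same framework, so the argument is self-contained only modulo that input.
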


\subsection{Amenable coverings}

Consider a right action of a finitely generated discrete group $\Gamma$ on a countable set $X$. We say that this action is \textit{amenable} if there exists an invariant mean on $\ell^\infty(X)$; that is, a linear functional $\mu \colon \ell^\infty(X) \to \R$ such that:
\begin{enumerate}[topsep=0pt,itemsep=-1pt,partopsep=1ex,parsep=0.5ex,leftmargin=*, label=(\roman*), align=left, labelsep=0em]
\item $\inf f \leq \mu (f) \leq \sup f$,
\item $\mu(f \circ r_g) = \mu(f)$, where $r_g(x) = xg$ for any $x \in X$,
\end{enumerate}
for any $f \in \ell^\infty(X)$ and $g \in \Gamma$.

The group $\Gamma$ is called amenable if the right action of $\Gamma$ on itself is amenable. Standard examples of amenable groups are solvable groups and groups of subexponential growth. It is not difficult to verify that if $\Gamma$ is amenable, then any right action of $\Gamma$ is amenable.
A particularly useful characterization of amenability is the following proposition due to F{\o}lner.

\begin{proposition}\label{Folner}
Let $\Gamma$ be a finitely generated generated group and fix a finite, symmetric generating set $G$. Then the right action of $\Gamma$ on a countable set $X$ is amenable if and only if for any $\ve > 0$ there exists a non-empty finite subset $P$ of $X$ such that
\[
| Pg \smallsetminus P| < \ve |P|
\]
for any $g \in G$.
\end{proposition}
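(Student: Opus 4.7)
\medskip

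\noindent\textbf{Proof proposal.} The plan is to prove the two implications separately, treating the easier direction (F\o lner sets produce an invariant mean) first and then tackling the harder direction via an approximation-plus-layer-cake argument.

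For the direction "F\o lner implies amenable," I would choose, for each $n \in \N$, a finite non-empty $P_n \subset X$ with $|P_n g \smallsetminus P_n| < n^{-1}|P_n|$ for every $g \in G$, and define the normalised counting mean
\[
\mu_n(f) = \frac{1}{|P_n|}\sum_{x\in P_n} f(x), \qquad f \in \ell^\infty(X).
\]
Each $\mu_n$ is a state on $\ell^\infty(X)$ (i.e.\ it satisfies the inequality $\inf f \leq \mu_n(f) \leq \sup f$), so the sequence lies in a weak-$*$ compact subset of $\ell^\infty(X)^*$ by Banach-Alaoglu. Extracting a weak-$*$ accumulation point $\mu$ gives a state, and I would check invariance under each generator $g \in G$ by the estimate
\[
|\mu_n(f \circ r_g) - \mu_n(f)| \leq \frac{\|f\|_\infty}{|P_n|}\bigl(|P_n g \smallsetminus P_n| + |P_n \smallsetminus P_n g|\bigr),
\]
which tends to zero because $G$ is symmetric and the F\o lner condition holds for both $g$ and $g^{-1}$. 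Invariance under a generating set extends to invariance under all of $\Gamma$ by a routine induction on word length, so $\mu$ is the desired invariant mean.

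The converse direction is the main obstacle. The issue is that an invariant mean is an abstract linear functional, and one must convert it into combinatorial data (a subset of $X$). I would proceed in two steps. First, using the standard fact that probability measures in $\ell^1(X)$ are weak-$*$ dense in the space of means on $\ell^\infty(X)$, together with a convexity argument (Hahn-Banach/Mazur: the weak and norm closures of a convex set in $\ell^1$ agree), one obtains Reiter-type almost invariant functions: for every $\varepsilon'>0$ there is $\varphi \in \ell^1(X)$ with $\varphi \geq 0$, $\|\varphi\|_1 = 1$, and
\[
\|\varphi \circ r_g - \varphi\|_1 < \varepsilon' \qquad\text{for all } g \in G.
\]
Second, I would apply the Namioka layer-cake trick: write $\varphi = \int_0^\infty \mathbf{1}_{\{\varphi > t\}}\, dt$ so that
\[
\|\varphi \circ r_g - \varphi\|_1 = \int_0^\infty |\{\varphi > t\}g \,\triangle\, \{\varphi > t\}|\, dt, \qquad \|\varphi\|_1 = \int_0^\infty |\{\varphi>t\}|\, dt.
\]
Summing over the finitely many $g \in G$ and comparing integrands, one finds some $t_0 > 0$ for which the level set $P = \{\varphi > t_0\}$ is finite, non-empty, and satisfies $\sum_{g \in G}|Pg \smallsetminus P| < \varepsilon'|G||P|$, so choosing $\varepsilon' = \varepsilon/|G|$ gives a F\o lner set for the desired $\varepsilon$.

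The technical heart, and the place I would be most careful, is the first step of the converse: justifying that weak-$*$ invariance of the mean can be upgraded to norm almost-invariance of an $\ell^1$ function. This is where one needs the Hahn-Banach separation argument (Day's trick), since invariance of the mean only gives that $0$ lies in the weak closure of the convex set $\{(\varphi \circ r_g - \varphi)_{g \in G} : \varphi \in \ell^1(X)_{+,1}\}\subset \ell^1(X)^{|G|}$, and one must promote this to the norm closure. Once this is in hand, the remaining arguments are essentially bookkeeping.
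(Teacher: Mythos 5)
The paper does not prove this proposition at all: it is stated as a classical criterion due to F\o lner and used as a black box, so there is no in-paper argument to compare yours against. Your proposal is the standard proof and is correct in both directions. For ``F\o lner $\Rightarrow$ amenable,'' the normalised counting means $\mu_n$ lie in the weak-$*$ compact set of states on $\ell^\infty(X)$, an accumulation point is invariant under each $g\in G$ by your estimate (note that $|P_n\smallsetminus P_ng|=|P_ng^{-1}\smallsetminus P_n|$, which is where symmetry of $G$ enters), and invariance propagates to all of $\Gamma$ via $r_{gh}=r_h\circ r_g$; the only care needed is that one must pass to a subnet rather than a subsequence, since $\ell^\infty(X)$ is not separable. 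For the converse, you correctly identify the two nontrivial steps: Day's convexity trick (the finitely supported probability densities are weak-$*$ dense in the means, so invariance of $\mu$ puts $0$ in the weak closure of the convex set $\{(\varphi\circ r_g-\varphi)_{g\in G}\}$ in $\ell^1(X)^{|G|}$, and Mazur upgrades this to the norm closure, yielding Reiter functions), followed by Namioka's layer-cake argument, where summing the identity $\|\varphi\circ r_g-\varphi\|_1=\int_0^\infty|\{\varphi>t\}g^{-1}\,\triangle\,\{\varphi>t\}|\,dt$ over $g\in G$ and comparing with $|G|\,\varepsilon'\int_0^\infty|\{\varphi>t\}|\,dt$ produces a level $t_0>0$ whose (automatically finite, non-empty) superlevel set is the desired F\o lner set. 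The discrepancy between $\{\varphi>t\}g$ and $\{\varphi>t\}g^{-1}$ in your display is harmless because $G$ is symmetric and translation by $g$ is a bijection of $X$. This is a complete and correct route to the statement.
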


Let $p \colon M_2 \to M_1$ be a Riemannian covering and choose $x \in M_{1}$ as a base point for $\pi_1(M_1)$. Given $y \in p^{-1}(x)$ and $g \in \pi_1(M_1)$, consider a representative loop $\gamma$ of $g$ based at $x$. Lift $\gamma$ to a path $\tilde{\gamma}$ starting at $y$ and denote its endpoint by $yg$. In this way, we obtain a right action of $\pi_1(M_1)$ to $p^{-1}(x)$, which is called the \textit{monodromy action}. We say that the covering is \textit{amenable} if the monodromy action is amenable. It is worth to mention that a normal Riemannian covering is amenable if and only if its deck transformation group is amenable. In particular, the universal covering of a manifold is amenable if and only if its fundamental group is amenable.

If $M_1$ is compact with boundary, then $\pi_1(M_1)$ is finitely generated. More specifically, the finite and symmetric set
\[
S_r = \{ g \in \pi_1(M_1) : g \text{ has a representative loop of length less than } r \}
\]
generates $\pi_1(M_1)$ for $r > 0$ sufficiently large. The next elementary lemma provides a description of the monodromy action in terms of this set.

\begin{lemma}\label{action of Gr}
Given $y_1 , y_2 \in p^{-1}(x)$, there exists $g \in S_r$ such that $y_2 = y_1 g$ if and only if $d(y_1,y_2) < r$.
\end{lemma}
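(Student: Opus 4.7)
The plan is to unwind the definitions of the monodromy action and of $S_r$, using only that $p\colon M_2 \to M_1$ is a local isometry together with uniqueness of path lifting.

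For the forward implication, suppose $g \in S_r$ and $y_2 = y_1 g$. By definition of $S_r$, there is a loop $\gamma$ at $x$ representing $g$ with length $\ell(\gamma) < r$. Lift $\gamma$ to the unique path $\tilde\gamma$ in $M_2$ starting at $y_1$. By the very definition of the monodromy action, $\tilde\gamma$ ends at $y_1 g = y_2$. Since $p$ is a local isometry, $\tilde\gamma$ and $\gamma$ have the same length, so $\tilde\gamma$ is a path from $y_1$ to $y_2$ of length less than $r$, giving $d(y_1,y_2) < r$.

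For the converse, assume $d(y_1,y_2) < r$. Since the Riemannian distance is the infimum of lengths of piecewise smooth paths joining the two points, there exists a piecewise smooth path $\tilde\gamma\colon [0,1] \to M_2$ from $y_1$ to $y_2$ with $\ell(\tilde\gamma) < r$. Project to $\gamma = p \circ \tilde\gamma$; this is a loop based at $x$ (because $p(y_1)=p(y_2)=x$), and again because $p$ is a local isometry its length equals $\ell(\tilde\gamma) < r$. Let $g \in \pi_1(M_1,x)$ be the homotopy class of $\gamma$. Then $g \in S_r$ by the definition of $S_r$. By uniqueness of path lifting, $\tilde\gamma$ is the lift of $\gamma$ starting at $y_1$; hence $y_1 g$, defined as the endpoint of this lift, equals $y_2$.

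There is no real obstacle here: the argument is essentially a direct translation between the monodromy action and path lifting, using that $p$ preserves lengths infinitesimally. The only mild point to mention in the write-up is that the infimum in the definition of $d$ is realized up to an arbitrarily small error by an actual piecewise smooth path, which is enough to guarantee a representative loop of length strictly less than $r$ after projecting. Everything else is immediate from \textbf{(i)} unique lifting of paths in a covering and \textbf{(ii)} $p$ being a local isometry.
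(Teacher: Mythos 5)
Your proof is correct and follows essentially the same route as the paper's: lift a short representative loop to get the forward direction, and project a short connecting path to get the converse, using in both cases that the covering projection preserves lengths and that the monodromy action is defined via endpoints of lifts. The extra care you take about the infimum in the definition of $d$ and about uniqueness of path lifting is implicit in the paper's terser argument.
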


\begin{proof}
Suppose that $y_2 = y_1 g$ for some $g \in S_r$. Then there exists a representative loop $\gamma$ of $g$ based at $x$ of length less than $r$. Since the endpoint of its lift starting at $y_1$ is $y_2$, it is clear that $d(y_1,y_2) < r$. Conversely, if $d(y_1,y_2) < r$, consider a curve $c$ from $y_1$ to $y_2$ of length less than $r$. Denoting by $g$ the class of $p \circ c$ in $\pi_1(M_1)$, we readily see that $y_2 = y_1 g$ and $g \in S_r$, $p \circ c$ having length less than $r$. \qed \medskip
\end{proof}

Amenability of a covering is intertwined with the preservation of the bottom of the spectrum, as Brooks' result illustrates. In the sequel, we will exploit the corresponding result involving the bottom of the Neumann spectrum.

\begin{theorem}[{{\cite[Theorem 1.1]{Mine2}}}]\label{Brooks Neumann}
Let $p \colon M_2 \to M_1$ be a Riemannian covering, where $M_1$ is compact with boundary. Then $p$ is amenable if and only if $\lambda_0^N(M_2) = 0$.
\end{theorem}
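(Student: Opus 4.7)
The plan is to prove both implications by translating between the analytic criterion for $\lambda_0^N(M_2)=0$ given by \Cref{bottom Neumann} (Rayleigh quotients of test functions with \emph{no} boundary condition) and F{\o}lner's criterion \Cref{Folner} applied to the monodromy action of $\pi_1(M_1)$ on $p^{-1}(x)$. The geometric bridge will be \Cref{action of Gr}: two fiber points are $S_r$-adjacent iff they lie within distance $r$. Fix $r>0$ so large that $S_r$ generates $\pi_1(M_1)$ and, additionally, so that there is a compact $K\subset M_2$ with $p(K)=M_1$ and with the property that for any $y\in p^{-1}(x)$ one can choose a lift $K_y$ of $K$ containing $y$ (a ``combinatorial fundamental domain''). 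Since $M_1$ is compact, the family $\{K_y\}_{y\in p^{-1}(x)}$ has uniformly bounded overlap, and $K_{y_1}$ meets an $r$-neighborhood of $K_{y_2}$ iff $y_2=y_1g$ for some $g\in S_{r'}$ with $r'$ comparable to $r$.

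For the direction ``$p$ amenable $\Rightarrow \lambda_0^N(M_2)=0$'', fix $\varepsilon>0$ and, by \Cref{Folner}, choose a finite non-empty $P\subset p^{-1}(x)$ with $|Pg\smallsetminus P|<\varepsilon|P|$ for every $g\in S_r$. Set $U_P=\bigcup_{y\in P}K_y$, and let $f_\varepsilon\in C^\infty_c(M_2)$ be a smooth cutoff that equals $1$ on $U_P$, vanishes outside the $r$-neighborhood of $U_P$, and has $\|\nabla f_\varepsilon\|$ bounded by a constant depending only on $r$. The gradient is supported in the ``boundary layer'' covered by the $K_y$ with $y\in(Pg\smallsetminus P)$ for some $g\in S_r$, so bounded overlap gives $\|\nabla f_\varepsilon\|_{L^2(M_2)}^2\leq C\,|P|\,\varepsilon$, while $\|f_\varepsilon\|_{L^2(M_2)}^2\geq |P|\vol(K)$. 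Thus $\mathcal{R}(f_\varepsilon)\leq C'\varepsilon$, and \Cref{bottom Neumann} (noting that no boundary condition is imposed on test functions) yields $\lambda_0^N(M_2)\leq C'\varepsilon$. Letting $\varepsilon\downarrow 0$ finishes this direction.

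For the converse, assume $\lambda_0^N(M_2)=0$ and pick $f_n\in C^\infty_c(M_2)$ with $\mathcal{R}(f_n)\to 0$, normalized by $\|f_n\|_{L^2(M_2)}=1$. Discretize along the fiber by setting $a_n(y)=\int_{K_y}f_n^2$ for $y\in p^{-1}(x)$, so that $\sum_y a_n(y)$ is comparable to $1$ by bounded overlap. If $y_2=y_1g$ with $g\in S_r$, then $K_{y_1}$ and $K_{y_2}$ are contained in a common set of bounded diameter, so a standard Poincar\'e-type inequality on that set controls $|a_n(y_1)-a_n(y_2)|$ (or, working with $b_n=\sqrt{a_n}$, controls $|b_n(y_1)-b_n(y_2)|^2$) by the local $L^2$-norm of $\nabla f_n$. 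Summing over $g\in S_r$ and over $y\in p^{-1}(x)$, bounded overlap again gives $\sum_{y,g}|b_n(y)-b_n(yg)|^2\leq C\|\nabla f_n\|_{L^2(M_2)}^2\to 0$, while $\sum_y b_n(y)^2$ stays bounded away from $0$. A co-area argument on the level sets $\{b_n>t\}$ then produces, for each $n$ large enough, a finite set $P_n\subset p^{-1}(x)$ with $|P_n g\smallsetminus P_n|<\varepsilon_n|P_n|$ for every $g\in S_r$, where $\varepsilon_n\to 0$. By \Cref{Folner} the monodromy action is amenable, hence $p$ is amenable.

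The main obstacle is the converse direction, and within it the co-area step: the $L^2$-estimate on $b_n$ must be converted into a genuine combinatorial F{\o}lner set. This requires choosing thresholds $t$ adapted to $b_n$ so that the discrete perimeter $|\partial P_n^{S_r}|$ is dominated by the $\ell^1$-type ``gradient'' of $b_n$ on the Schreier graph, in the spirit of the Cheeger/Buser transition; ensuring that all estimates are uniform in $y$ relies crucially on the bounded overlap of the family $\{K_y\}$, which in turn reflects the compactness of $M_1$.
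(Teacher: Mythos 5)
The paper itself offers no proof of \Cref{Brooks Neumann}: the theorem is imported as a black box from \cite[Theorem 1.1]{Mine2} and is only \emph{used} (in the step $\sigma_0(M_2)=0\Rightarrow\lambda_0^N(M_2)=0\Rightarrow p$ amenable of \Cref{covering thm}), so there is no internal argument to compare against line by line. Your outline is the classical Brooks-type discretization argument \cite{Brooks}, and its first half is, up to the target functional, exactly the machinery this paper does build in \Cref{Coverings}: your $K_y$ are the domains $F_y$, your cutoff $f_\varepsilon$ is the function $\chi_P$, and the estimate $\|\nabla f_\varepsilon\|_{L^2}^2\leq C\varepsilon|P|$ against $\|f_\varepsilon\|_{L^2}^2\geq c|P|$ is precisely \Cref{amenability} combined with the uniform bounds (\ref{uniform estimates}); since the test functions in \Cref{bottom Neumann} carry no boundary condition, that direction is complete.

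The converse is the genuinely delicate half, and two of your steps need more than you give them. First, the inequality $|b_n(y_1)-b_n(y_2)|^2\leq C\int\|\nabla f_n\|^2$ is not simply ``a standard Poincar\'e-type inequality on a set of bounded diameter'': Neumann--Poincar\'e constants of arbitrary bounded-diameter subsets of $M_2$ are not uniformly controlled (dumbbell-shaped unions can degenerate). What saves you is that all $K_y$ have the same volume, so comparing the $L^2$-norms over $K_{y_1}$ and $K_{y_2}$ reduces to comparing each to the mean over a \emph{connected} union $\Omega$ of boundedly many copies of the fundamental domain, whose Poincar\'e constant is controlled by a chaining argument depending only on $M_1$ and $|S_r|$; this uniformity must be stated and proved, not assumed. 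Second, the discrete co-area formula on the Schreier graph produces level sets of an $\ell^1$-controlled function, whereas you only have $\sum_{y,g}|b_n(y)-b_n(yg)|^2\to 0$; you must first pass to $\sum_{y,g}|b_n(y)^2-b_n(yg)^2|\to 0$ by Cauchy--Schwarz against the bounded quantity $\sum_y b_n(y)^2$, and then choose the threshold $t$ for the level sets of $b_n^2$ by averaging over $t$. You correctly flag this second point as the crux, and both points are standard, so I would describe the proposal as a correct skeleton of the intended (and, in the cited reference, actual) strategy rather than a wrong approach --- but as written neither step is yet a proof.
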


\subsection{Lie groups}

A connected Lie group $G$ is called \textit{amenable} if there exists a left-invariant mean on $L^{\infty}(G)$; that is a linear functional $\mu \colon L^{\infty}(G) \to \R$ such that
\begin{enumerate}[topsep=0pt,itemsep=-1pt,partopsep=1ex,parsep=0.5ex,leftmargin=*, label=(\roman*), align=left, labelsep=0em]
	\item $\essinf f \leq \mu(f) \leq \esssup f $,
	\item $\mu(f \circ L_x) = \mu(f)$,
\end{enumerate}
for any $f \in L^{\infty}(G)$ and $x \in G$, where $L_x \colon G \to G$ stands for multiplication from the left with $x \in G$. Here, we consider $L^\infty(G)$ with respect to the Haar measure of $G$, which is just a constant multiple of the volume element of $G$ induced by a left-invariant metric. For more details, see \cite{MR0251549}. It is well-known that a connected Lie group $G$ is amenable if and only if it is a compact extension of a solvable group (cf. for example \cite[Lemma 2.2]{MR454886}).

A Lie group $G$ is called \textit{unimodular} if its Haar measure is right-invariant. It is noteworthy that a connected Lie group $G$ is unimodular if and only if $\tr (\ad X) = 0$ for any $X$ in the Lie algebra of $G$ (cf. for instance \cite[Proposition 1.2]{Hoke}). Standard examples of unimodular and amenable Lie groups are connected, nilpotent Lie groups.

Even thought the above properties are group theoretic, they are reflected in the spectrum of the Laplacian. The following characterization has been established for simply connected Lie groups in \cite[Theorem 3.8]{Hoke}, and extended to connected Lie groups in \cite[Theorem 2.10]{Mine}.

\begin{theorem}\label{unimodular and amenable}
	A connected Lie group $G$ is unimodular and amenable if and only if $\lambda_0(G) = 0$ for some/any left-invariant metric on $G$.
\end{theorem}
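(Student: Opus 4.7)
My plan is to prove the two implications separately using the variational characterization of $\lambda_0(G)$ provided by Proposition \ref{bottom no boundary}. For the forward direction, assume $G$ is connected, unimodular and amenable, and fix a left-invariant metric. Unimodularity ensures that the Haar measure is bi-invariant, so it coincides up to a constant with the Riemannian volume form. Amenability then yields a F{\o}lner sequence in the continuous sense: for each compact neighborhood $K$ of the identity and each $\varepsilon > 0$, there is a precompact open set $\Omega \subset G$ with $\vol(K \Omega \smallsetminus \Omega) < \varepsilon \vol(\Omega)$. I would smooth the characteristic function of such an $\Omega$ by convolving with a fixed mollifier supported in $K$, producing $\chi \in C_c^{\infty}(G)$ with $0 \leq \chi \leq 1$, $\supp \chi \subset K\Omega$, $\chi \equiv 1$ on $\Omega \smallsetminus K^{-1}\partial \Omega$, and $\| \grad \chi \|_{\infty}$ bounded uniformly in $\Omega$. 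The F{\o}lner condition forces $\vol(\supp \grad \chi)/\vol(\Omega) \to 0$ along the sequence, so $\mathcal{R}(\chi_n) \to 0$ and $\lambda_0(G) = 0$.

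For the converse, I would argue contrapositively. If $G$ is non-unimodular, the modular function $\delta \colon G \to (0,+\infty)$ is a nontrivial smooth multiplicative character, and $H := \grad \log \delta$ is a left-invariant nonzero vector field whose norm is constant on $G$. A substitution $f = \delta^{1/2} h$ for $h \in C_c^{\infty}(G)$ intertwines the left-invariant Laplacian with a Schr\"odinger operator whose potential picks up the constant term $\tfrac{1}{4}\| H \|^2 > 0$, yielding $\lambda_0(G) \geq \tfrac{1}{4}\|H\|^2 > 0$. If instead $G$ is unimodular but non-amenable, I would reverse the F{\o}lner argument: for a sequence $(f_n) \subset C_c^{\infty}(G)$ with $\mathcal{R}(f_n) \to 0$, a coarea/Cavalieri computation applied to $|f_n|$ produces sublevel sets with arbitrarily small boundary-to-volume ratio, contradicting non-amenability via the Lie group version of Cheeger's inequality (which rests on the $(L^2)$ characterization of amenability by the bottom of the spectrum of the convolution Laplacian).

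The step I expect to be most delicate is the non-unimodular implication. One must track carefully the unitary transformation between $L^2(G, \vol_L)$ and $L^2(G, \vol_R)$ (with $\vol_L, \vol_R$ denoting left and right Haar measure), compute the resulting drift and potential explicitly, and verify that the lower-order terms do not destroy the positive constant coming from $\|H\|^2$; this is exactly where one exploits the fact that $\log \delta$ is a Lie algebra homomorphism, so $\|H\|$ is a positive constant on $G$ rather than merely a bounded function. The amenable-implies-$\lambda_0 = 0$ direction is more standard once the F{\o}lner-type cutoffs are built with uniform gradient bounds, and the non-amenable direction for unimodular $G$ essentially reduces to Brooks' argument applied in the Lie group setting.
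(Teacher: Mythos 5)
Note first that the paper does not actually prove \Cref{unimodular and amenable}: it imports the statement as a known result, proved for simply connected groups in \cite{Hoke} and extended to connected groups in \cite{Mine}, so there is no internal proof to compare your argument against. That said, your outline is essentially the classical argument underlying those references, and each of its three parts is sound: mollifying indicator functions of F{\o}lner sets (with gradient bounds uniform in the set, thanks to left-invariance of the metric) gives $\lambda_0(G)=0$ when $G$ is unimodular and amenable; in the non-unimodular case the substitution $f=\delta^{1/2}h$ does yield $\lambda_0(G)\geq \frac14\|H\|^2>0$, the key point being that the cross term $\int \delta\, h\,\langle \grad h, H\rangle$ vanishes because $\diver(\delta H)=0$ --- this is exactly where the homomorphism property of $\log\delta$ (equivalently, $\diver H=-\|H\|^2$ and $\grad\delta=\delta H$) enters, and you are right to flag the sign bookkeeping as the delicate step; and Cheeger's inequality disposes of the unimodular non-amenable case. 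The one place where your reasoning as stated is defective is the justification of that last case: invoking ``the $L^2$ characterization of amenability by the bottom of the spectrum of the convolution Laplacian'' is circular, since a Kesten-type spectral characterization is essentially what is being proved. What you actually need there is the isoperimetric (F{\o}lner) characterization of amenability for unimodular locally compact groups: from $\mathcal{R}(f_n)\to 0$ the coarea formula produces sublevel sets with boundary-area-to-volume ratio tending to zero, hence vanishing Cheeger constant, hence F{\o}lner sets, hence amenability. With that substitution the argument closes; it also establishes the ``some/any'' clause for free, since none of the three cases depends on the particular left-invariant metric chosen.
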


\section{The Steklov spectrum of manifolds with bounded geometry}\label{Steklov spectrum}

In this section, we define the Steklov spectrum of manifolds satisfying the assumptions of \Cref{unique harmonic ext thm}, and discuss some basic properties of it. Throughout this section, we consider a Riemannian manifold $M$ with boundary and bounded geometry such that $\lambda_0^D(M) > 0$, and we denote by $\nu$ the outward pointing unit normal to $\partial M$.

\begin{proposition}\label{existence of harmonic extension}
Any $f \in C^{\infty}_c(\partial M)$ has a unique square-integrable harmonic extension $\mathcal{H}f \in C^{\infty}(M)$. In addition, this extension is written as $\mathcal{H}f = f + h$, where $f \in C^{\infty}_c(M)$ is an extension of $f$ and $h \in \mathcal{D}(\Delta^D)$.
\end{proposition}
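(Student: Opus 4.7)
The plan is to reduce the construction of $\mathcal{H}f$ to inverting the Dirichlet Laplacian, which is well posed since $\lambda_{0}^{D}(M) > 0$. First I would choose a compactly supported smooth extension $\tilde{f} \in C^{\infty}_{c}(M)$ of $f$: using the collar neighborhood $\exp \colon \partial M \times [0,r) \to M$ from \Cref{manifold with bounded geometry}(i), one can set $\tilde{f}(\exp_{x}(-t\nu)) := \chi(t) f(x)$ for a smooth cutoff $\chi$ with $\chi(0)=1$ and $\supp \chi \subset [0,r)$, then extend by zero. Then $\Delta \tilde{f} \in C^{\infty}_{c}(M) \subset L^{2}(M)$, and since $\lambda_{0}^{D}(M) > 0$ the self-adjoint operator $\Delta^{D}$ is invertible on $L^{2}(M)$; I set
\[
h := -(\Delta^{D})^{-1}(\Delta \tilde{f}) \in \mathcal{D}(\Delta^{D}), \qquad \mathcal{H}f := \tilde{f} + h .
\]
By construction $\Delta(\mathcal{H}f) = 0$ distributionally, and $\mathcal{H}f \in L^{2}(M)$ because $\tilde{f}$ has compact support and $h \in L^{2}(M)$.

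To obtain $\mathcal{H}f \in C^{\infty}(M)$ and the correct boundary trace, I would combine \Cref{Elliptic estimates}, which yields $h \in H^{2}(M)$, with interior elliptic regularity and classical Dirichlet boundary regularity applied to $\Delta h = -\Delta \tilde{f}$ with $h|_{\partial M}=0$: bootstrapping in local half-space coordinates furnished by bounded geometry with smooth data yields $h \in C^{\infty}(M)$. That the trace of $h$ indeed vanishes uses essential self-adjointness of the Dirichlet Laplacian (granted because bounded geometry makes $M$ complete), so $h$ is an $H^{1}$-limit of functions in $C^{\infty}_{c}(M)$ that vanish on $\partial M$, and \Cref{Trace thm} propagates the zero boundary trace to $h$. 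Consequently $\mathcal{H}f|_{\partial M} = f$.

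For uniqueness, let $u_{1}, u_{2}$ be two square-integrable harmonic extensions of $f$, and set $u := u_{1}-u_{2} \in C^{\infty}(M) \cap L^{2}(M)$; this $u$ is harmonic and vanishes on $\partial M$. For any $\phi \in C^{\infty}_{c}(M)$ with $\phi|_{\partial M} = 0$, Green's identity on a compact set containing $\supp \phi$ gives
\[
\int_{M} u \, \Delta \phi \;=\; \int_{M} (\Delta u)\,\phi \;+\; \int_{\partial M}\bigl(\phi\,\nu(u) - u\,\nu(\phi)\bigr)\,d\sigma \;=\; 0,
\]
since $\Delta u = 0$ on the interior and both $u$ and $\phi$ vanish on $\partial M$. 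Hence $u$ lies in the domain of the $L^{2}$-adjoint of the symmetric operator in \eqref{Dirichlet}, and essential self-adjointness identifies this adjoint with $\Delta^{D}$; thus $u \in \mathcal{D}(\Delta^{D})$ with $\Delta^{D} u = 0$, and the inclusion $\sigma(\Delta^{D}) \subset [\lambda_{0}^{D}(M), \infty) \subset (0,\infty)$ forces $u=0$.

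The step I expect to be most delicate is the upgrade from $h \in H^{2}(M)\cap \mathcal{D}(\Delta^{D})$ to $h \in C^{\infty}(M)$ with $h|_{\partial M}=0$: both the smoothness up to the boundary and the identification of the trace reduce, in the collar charts provided by bounded geometry, to classical half-space Dirichlet regularity, but the patchwork must be assembled carefully through \Cref{Elliptic estimates} and \Cref{Trace thm} rather than re-proving elliptic boundary theory from scratch.
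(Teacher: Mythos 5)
Your proposal is correct and follows essentially the same route as the paper: extend $f$ compactly, use the invertibility of $\Delta^D$ (from $\lambda_0^D(M)>0$) to produce the correction term $h\in\mathcal{D}(\Delta^D)$, upgrade to smoothness by elliptic regularity, and prove uniqueness by showing the difference lies in $\mathcal{D}(\Delta^*)=\mathcal{D}(\Delta^D)$ via essential self-adjointness and then invoking the spectral gap. The only difference is that you spell out the collar-chart construction of the extension and the boundary-trace bookkeeping more explicitly than the paper, which simply cites elliptic regularity.
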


\begin{proof}
Let $f \in C^{\infty}_c(\partial M)$ and consider an extension $f \in C^{\infty}_c(M)$ of it. The assumption that $\lambda_0^D(M) > 0$ means that
\[
\Delta^D \colon \mathcal{D}(\Delta^D) \subset L^2(M) \to L^2(M)
\]
is bijective. In particular, there exists $h \in \mathcal{D}(\Delta^D)$ such that $\Delta^D h = - \Delta f$. Bearing in mind that $f \in C^{\infty}(M)$, we derive from elliptic regularity that $h \in C^{\infty}(M)$ with $h = 0$ on $\partial M$, and $\Delta h = - \Delta f$. It is evident that $h + f$ is a square-integrable harmonic extension of $f$.

Let $f_1, f_2 \in C^{\infty}(M)$ be square-integrable harmonic extensions of a given $f \in C^{\infty}_c(M)$. Then $h = f_1 - f_2 \in C^{\infty}(M) \cap L^2(M)$ is harmonic and vanishes on $\partial M$. Given $g \in C^\infty_{c}(M)$ with $g = 0$ on $\partial M$, we compute
\[
\< h , \Delta g \>_{L^2(M)} = \int_{M} \< \grad h, \grad g \> - \int_{\partial M} h \nu(g) = \int_{\partial M} \nu(h) g = 0,
\]
where we used that $h$ is harmonic. This shows that $h \in \mathcal{D}(\Delta^*)$ with $\Delta^* h = 0$. Since $M$ is complete, the Laplacian $\Delta$ regarded as in (\ref{Dirichlet}) is essentially self-adjoint, and hence, $\Delta^* = \Delta^D$. We deduce that $h$ belongs to the kernel of $\Delta^D$ and thus, $h = 0$, due to the fact that $\lambda_0^D(M) > 0$.\qed
\end{proof}

\begin{proposition}\label{approximation}
For any $f \in C^{\infty}_c(\partial M)$ we have that $\mathcal{H}f \in H^2(M)$. Moreover, there exists a sequence $(f_n)_{n \in \mathbb{N}} \subset C^{\infty}_c(M) $, consisting of extensions of $f$, such that $f_n \rightarrow \mathcal{H}f$ in $H^{2}(M)$.
\end{proposition}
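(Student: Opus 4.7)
The plan is to leverage the decomposition provided by \Cref{existence of harmonic extension} together with the elliptic estimate in \Cref{Elliptic estimates}. First I would write $\mathcal{H}f = \tilde{f} + h$, where $\tilde{f} \in C^{\infty}_c(M)$ is a fixed smooth compactly supported extension of $f \in C^{\infty}_c(\partial M)$, and $h \in \mathcal{D}(\Delta^D)$ with $\Delta^D h = -\Delta \tilde{f}$. Since $\tilde{f} \in C^{\infty}_c(M) \subset H^2(M)$, the first claim $\mathcal{H}f \in H^2(M)$ follows immediately once we know $h \in H^2(M)$, and this is a direct consequence of \Cref{Elliptic estimates}.

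For the approximation, the idea is to approximate $h$ (the part of $\mathcal{H}f$ vanishing on the boundary) by smooth compactly supported functions that vanish on $\partial M$, and then add the fixed extension $\tilde{f}$ back. Since $M$ has bounded geometry, it is complete, so the Laplacian regarded as in (\ref{Dirichlet}) is essentially self-adjoint; hence $\Delta^D$ is the closure of this operator. Thus, there exists $(h_n)_{n\in\N} \subset C^{\infty}_c(M)$ with $h_n = 0$ on $\partial M$ such that $h_n \to h$ in $L^2(M)$ and $\Delta h_n \to \Delta^D h$ in $L^2(M)$. Applying the estimate from \Cref{Elliptic estimates} to the differences $h_n - h_m$, we see that $(h_n)$ is Cauchy in $H^2(M)$, and by uniqueness of limits we conclude $h_n \to h$ in $H^2(M)$.

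The sequence $f_n = \tilde{f} + h_n$ then lies in $C^{\infty}_c(M)$, satisfies $f_n|_{\partial M} = \tilde{f}|_{\partial M} = f$ (since $h_n$ vanishes on $\partial M$), and converges to $\tilde{f} + h = \mathcal{H}f$ in $H^2(M)$, which is exactly what is required.

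I do not foresee a serious obstacle here: the proof is essentially an assembly of \Cref{existence of harmonic extension}, \Cref{Elliptic estimates}, and the essential self-adjointness of the Dirichlet Laplacian on a complete manifold. The only subtle point is to apply the elliptic estimate not directly to $h$ (which already would give $h \in H^2(M)$), but to the differences $h_n - h_m$ in order to upgrade the $L^2$-approximation into an $H^2$-approximation; this works because each $h_n - h_m$ lies in $C^{\infty}_c(M)$ and vanishes on $\partial M$, which is the class of test functions for which \Cref{Elliptic estimates} was stated.
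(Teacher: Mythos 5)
Your proposal is correct and follows essentially the same route as the paper: decompose $\mathcal{H}f$ via \Cref{existence of harmonic extension}, use essential self-adjointness of the Dirichlet Laplacian to approximate $h$ in the graph norm, and upgrade to $H^2$-convergence via \Cref{Elliptic estimates}. Your explicit Cauchy argument on the differences $h_n - h_m$ is exactly the mechanism already built into the paper's proof of \Cref{Elliptic estimates}, so nothing is missing.
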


\begin{proof}
Given $f \in C^{\infty}_c(\partial M)$, it follows from \Cref{existence of harmonic extension} that $\mathcal{H}f = f + h$, for some extension $f \in C^{\infty}_c(M)$ of $f$ and $h \in \mathcal{D}(\Delta^D)$. Since the Laplacian on $M$ regarded as in (\ref{Dirichlet}) is essentially self-adjoint, we readily see that there exists $(h_{n})_{n \in \N} \subset C^{\infty}_c(M)$ with $h_n = 0$ on $\partial M$, such that $h_n \rightarrow h$ and $\Delta h_n \rightarrow \Delta h$ in $L^2(M)$. Then \Cref{Elliptic estimates} yields that $h_n \rightarrow h$ in $H^2(M)$. Therefore, $f_n = f + h_n \in C^{\infty}_c(M)$ is an extension of $f$ and $f_n \rightarrow \mathcal{H}f$ in $H^2(M)$. \qed
\end{proof}\medskip

\noindent{\emph{Proof of \Cref{unique harmonic ext thm}:}} We know from \Cref{existence of harmonic extension,,approximation} that any $f \in C^{\infty}_c(\partial M)$ admits a unique square-integrable harmonic extension $\mathcal{H}f \in H^2(M)$. In view of \Cref{extension}, we may extend $\nu$ to a bounded smooth vector field $N$ on $M$ with $\nabla N$ bounded. Then we have that $N(\mathcal{H}f) \in H^{1}(M)$, and its restriction $\nu(\mathcal{H}f)$ to the boundary is square-integrable, by virtue of \Cref{Trace thm}. \qed\medskip

It is now clear that the Dirichlet-to-Neumann map
\[
\Lambda \colon C^{\infty}_c(\partial M) \subset L^{2}(\partial M) \to L^{2}(\partial M), \text{ } f \mapsto \nu(\mathcal{H}f)
\]
is well-defined. It is also evident that this linear operator is densely-defined.

\begin{lemma}\label{symmetry}
For any $f,h \in C^{\infty}_c(\partial M)$, we have that
\[
\< \Lambda f , h \>_{L^{2}(\partial M)} = \< f , \Lambda h \>_{L^{2}(\partial M)} = \int_{M} \< \grad (\mathcal{H}f) , \grad (\mathcal{H}h) \>.
\]
\end{lemma}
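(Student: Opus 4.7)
The plan is to reduce everything to Green's identity for compactly supported smooth functions, then pass to the limit using the $H^2$-approximation provided by \Cref{approximation}. Concretely, for any $u,v \in C^\infty_c(M)$ the divergence formula gives
\[
\int_M \langle \grad u, \grad v \rangle = \int_M u\, \Delta v + \int_{\partial M} u\, \nu(v).
\]
I would then invoke \Cref{approximation} to pick sequences $(f_n),(h_n) \subset C^\infty_c(M)$ with $f_n|_{\partial M} = f$, $h_n|_{\partial M} = h$, and $f_n \to \mathcal{H}f$, $h_n \to \mathcal{H}h$ in $H^2(M)$, and apply the identity above to the pair $(f_n, h_n)$.

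Next I would pass to the limit term by term. The interior pairing $\int_M \langle \grad f_n, \grad h_n \rangle$ converges to $\int_M \langle \grad \mathcal{H}f, \grad \mathcal{H}h \rangle$ because gradients converge in $L^2(M)$ (a consequence of $H^1$-convergence). The bulk term $\int_M f_n\, \Delta h_n$ tends to $0$: the sequence $f_n$ is $L^2$-bounded and $\Delta h_n \to \Delta \mathcal{H}h = 0$ in $L^2(M)$ by harmonicity of $\mathcal{H}h$, so Cauchy--Schwarz closes this step. The boundary term requires the most care: writing $\nu(h_n) = N(h_n)|_{\partial M}$ for the bounded extension $N$ of $\nu$ from \Cref{extension} (which has $\nabla N$ bounded as well), the $H^2$-convergence $h_n \to \mathcal{H}h$ forces $N(h_n) \to N(\mathcal{H}h)$ in $H^1(M)$, whence \Cref{Trace thm} yields $\nu(h_n) \to \nu(\mathcal{H}h) = \Lambda h$ in $L^2(\partial M)$. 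Therefore $\int_{\partial M} f\, \nu(h_n) \to \langle f, \Lambda h\rangle_{L^2(\partial M)}$.

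Combining the three limits gives $\int_M \langle \grad \mathcal{H}f, \grad \mathcal{H}h\rangle = \langle f, \Lambda h\rangle_{L^2(\partial M)}$, and swapping the roles of $f$ and $h$ produces the symmetric identity $\langle \Lambda f, h\rangle_{L^2(\partial M)} = \int_M \langle \grad \mathcal{H}f, \grad \mathcal{H}h\rangle$. The only nontrivial obstacle is the boundary-trace convergence of $\nu(h_n)$, which is precisely why \Cref{extension} and \Cref{Trace thm} are invoked together; the remaining two limits are straightforward continuity arguments in $L^2(M)$.
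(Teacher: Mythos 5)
Your proof is correct and follows essentially the same route as the paper: Green's identity for compactly supported extensions combined with the $H^2$-approximation of \Cref{approximation}. The paper is slightly more economical---it applies the divergence formula directly to the pair $(f_n,\mathcal{H}h)$, so the boundary term is already $\int_{\partial M} f\,\nu(\mathcal{H}h)$ and only $f$ needs to be approximated, avoiding your trace-convergence step for $\nu(h_n)$---but that step of yours is valid and the two arguments are substantively the same.
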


\begin{proof}
For any extension $f \in C^{\infty}_c(M)$ of $f$, we derive from the divergence formula that
\[
\< f , \Lambda h \>_{L^{2}(\partial M)} = \int_{M} \< \grad f , \grad (\mathcal{H}h) \>.
\]
The proof is completed by \Cref{approximation}, which asserts that there exists a sequence $(f_n)_{n \in \N}$ of such extensions satisfying $f_n \rightarrow \mathcal{H}f$ in $H^{2}(M)$. \qed
\end{proof}\medskip

Hence, aforementioned Dirichlet-to-Neumann map admits Friedrichs extension, being symmetric and bounded from below by zero. The spectrum $\sigma(M)$ of its Friedrichs extension is called the \textit{Steklov spectrum} of $M$, and its bottom is denoted by $\sigma_0(M)$.

\begin{proposition}\label{bottom of Steklov}
The bottom of the Steklov spectrum of $M$ is given by
\[
\sigma_0(M) = \inf_{f} \frac{\int_{M} \| \grad f \|^2}{\int_{\partial M} f^2},
\]
where the infimum is taken over all $f \in C^{\infty}_c(M)$ which are not identically zero on $\partial M$.
\end{proposition}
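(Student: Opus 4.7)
The plan is to reduce the statement to Proposition \ref{bottom of Friedrichs extension} applied to the symmetric operator $\Lambda$, using Lemma \ref{symmetry} to rewrite the numerator, and then to swap the minimization over boundary data for minimization over compactly supported extensions by means of Proposition \ref{approximation}.

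\textbf{Setup.} Denote by $I$ the right-hand side of the asserted formula. Since $\Lambda$ is symmetric and non-negative, Proposition \ref{bottom of Friedrichs extension} and Lemma \ref{symmetry} give
\[
\sigma_0(M) = \inf_{0 \neq g \in C^{\infty}_c(\partial M)} \frac{\langle \Lambda g, g \rangle_{L^2(\partial M)}}{\| g \|^2_{L^2(\partial M)}} = \inf_{0 \neq g \in C^{\infty}_c(\partial M)} \frac{\int_M \| \grad(\mathcal{H}g) \|^2}{\int_{\partial M} g^2}.
\]
I will show $I \le \sigma_0(M)$ and $I \ge \sigma_0(M)$ separately.

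\textbf{The inequality $I \le \sigma_0(M)$.} Fix a non-zero $g \in C^{\infty}_c(\partial M)$. By Proposition \ref{approximation} there is a sequence $(g_n) \subset C^{\infty}_c(M)$ of extensions of $g$ with $g_n \to \mathcal{H}g$ in $H^2(M)$. Each $g_n$ restricts to $g$ on $\partial M$, so the denominator $\int_{\partial M} g_n^2 = \int_{\partial M} g^2$ is constant, while $\int_M \|\grad g_n\|^2 \to \int_M \|\grad(\mathcal{H}g)\|^2$ by the $H^1$-convergence. Each $g_n$ is admissible in the infimum defining $I$ (provided $g \neq 0$), so passing to the limit yields $I \le \int_M \|\grad(\mathcal{H}g)\|^2 / \int_{\partial M} g^2$; taking infimum over $g$ gives $I \le \sigma_0(M)$.

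\textbf{The inequality $I \ge \sigma_0(M)$.} Let $f \in C^{\infty}_c(M)$ with $g := f|_{\partial M}$ not identically zero, and write $f = \mathcal{H}g + w$ where $w := f - \mathcal{H}g \in H^1(M)$ vanishes on $\partial M$. The key identity is the orthogonality
\[
\int_M \langle \grad(\mathcal{H}g), \grad w \rangle = 0,
\]
from which $\int_M \|\grad f\|^2 = \int_M \|\grad(\mathcal{H}g)\|^2 + \int_M \|\grad w\|^2 \ge \int_M \|\grad(\mathcal{H}g)\|^2$ follows, and hence $\int_M \|\grad f\|^2 / \int_{\partial M} f^2 \ge \int_M \|\grad(\mathcal{H}g)\|^2 / \int_{\partial M} g^2 \ge \sigma_0(M)$, giving $I \ge \sigma_0(M)$ upon taking infimum.

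\textbf{The main technical point: justifying the orthogonality.} Since $\mathcal{H}g$ is not assumed compactly supported, we cannot apply the divergence formula to the pair $(\mathcal{H}g, w)$ directly, and this is where Proposition \ref{approximation} enters a second time. Let $(g_n) \subset C^{\infty}_c(M)$ be extensions of $g$ converging to $\mathcal{H}g$ in $H^2(M)$, so that $w_n := f - g_n \in C^{\infty}_c(M)$ vanishes on $\partial M$ and $w_n \to w$ in $H^1(M)$. The divergence formula applied to $\mathcal{H}g \in H^2(M)$ and $w_n \in C^{\infty}_c(M)$ gives
\[
\int_M \langle \grad(\mathcal{H}g), \grad w_n \rangle = -\int_M (\Delta \mathcal{H}g)\, w_n + \int_{\partial M} \nu(\mathcal{H}g)\, w_n = 0,
\]
since $\mathcal{H}g$ is harmonic and $w_n$ vanishes on $\partial M$. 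The $H^1$-convergence of $w_n$ to $w$ then yields the required orthogonality in the limit. Combining the two inequalities completes the proof. \qed
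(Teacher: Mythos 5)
Your proof is correct and follows essentially the same route as the paper: both reduce to the Rayleigh quotient of $\Lambda$ via Proposition \ref{bottom of Friedrichs extension} and Lemma \ref{symmetry}, obtain one inequality from the $H^2$-approximation of $\mathcal{H}g$ by compactly supported extensions (Proposition \ref{approximation}), and the other from the orthogonal decomposition $f = \mathcal{H}g + w$. The only difference is that you re-derive inline the energy-minimizing property of the harmonic extension, which the paper isolates as Lemma \ref{harmonmic extension minimizes energy} and proves by the same divergence-formula-plus-limit argument.
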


Before proceeding to the proof of this proposition, we need the following remark.

\begin{lemma}\label{harmonmic extension minimizes energy}
For any extension $f \in C^{\infty}_{c}(M)$ of a function $f \in C^{\infty}_{c}(\partial M)$, we have that
\[
\int_{M} \| \grad (\mathcal{H}f) \|^2 \leq \int_{M} \| \grad f \|^2.
\]
\end{lemma}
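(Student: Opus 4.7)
The plan is to prove this as a standard Dirichlet-principle-type inequality: expand the energy of $f$ around $\mathcal{H}f$ and show that the cross-term vanishes. Write $h = f - \mathcal{H}f$. Since $f \in C^{\infty}_c(M)$ and $\mathcal{H}f \in H^2(M)$ (by \Cref{approximation}), we have $h \in H^1(M)$, and $h$ has vanishing boundary trace because both $f$ and $\mathcal{H}f$ restrict to the same function on $\partial M$. Then
\[
\int_M \| \grad f \|^2 = \int_M \| \grad \mathcal{H}f \|^2 + 2 \int_M \langle \grad \mathcal{H}f, \grad h \rangle + \int_M \| \grad h \|^2,
\]
so the lemma reduces to showing that the cross-term $\int_M \langle \grad \mathcal{H}f, \grad h \rangle$ vanishes.

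To establish this, I would invoke \Cref{approximation} to pick a sequence $(f_n)_{n\in\N} \subset C^{\infty}_c(M)$ of extensions of $f|_{\partial M}$ with $f_n \to \mathcal{H}f$ in $H^2(M)$. Setting $h_n = f - f_n \in C^{\infty}_c(M)$, we have $h_n = 0$ on $\partial M$ and $h_n \to h$ in $H^1(M)$. Since both $f_n$ and $h_n$ are smooth and compactly supported with $h_n$ vanishing on the boundary, Green's first identity (in the form used in the proof of \Cref{existence of harmonic extension}) gives
\[
\int_M \langle \grad f_n, \grad h_n \rangle = \int_M h_n \, \Delta f_n + \int_{\partial M} h_n\, \nu(f_n) = \int_M h_n \, \Delta f_n.
\]

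Now I pass to the limit. On the left, $H^1$-convergence of both factors yields $\int_M \langle \grad f_n, \grad h_n \rangle \to \int_M \langle \grad \mathcal{H}f, \grad h \rangle$. On the right, $H^2$-convergence of $f_n$ to the harmonic function $\mathcal{H}f$ implies $\Delta f_n \to \Delta \mathcal{H}f = 0$ in $L^2(M)$, while $\|h_n\|_{L^2(M)}$ stays bounded; Cauchy--Schwarz then forces $\int_M h_n \, \Delta f_n \to 0$. Hence the cross-term is zero and $\int_M \|\grad f\|^2 = \int_M \|\grad \mathcal{H}f\|^2 + \int_M \|\grad h\|^2 \geq \int_M \|\grad \mathcal{H}f\|^2$, as claimed.

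The only subtle point is that $\mathcal{H}f$ is not compactly supported, so one cannot integrate by parts against it directly; this is precisely where the $H^2$-approximation provided by \Cref{approximation} is essential, and it is the key input making the argument go through on a non-compact manifold with bounded geometry.
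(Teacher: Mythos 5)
Your proof is correct and follows essentially the same route as the paper: expand the Dirichlet energy around $\mathcal{H}f$ and kill the cross-term by integration by parts, using \Cref{approximation} together with the fact that $f - \mathcal{H}f$ lies in $H^1_0(M)$. The only (immaterial) difference is in the limiting argument for the cross-term: the paper integrates by parts directly against the harmonic function $\mathcal{H}f$ paired with compactly supported approximants $h_n$ of the difference --- which is legitimate since $h_n \grad(\mathcal{H}f)$ is compactly supported, so your worry about integrating by parts against the non-compactly-supported $\mathcal{H}f$ is not actually an obstruction --- whereas you additionally replace $\mathcal{H}f$ by its $H^2$-approximants $f_n$ and use $\Delta f_n \to 0$ in $L^2(M)$; both arguments yield the same conclusion.
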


\begin{proof}
Since $\mathcal{H}f - f = h \in H_{0}^1(M)$, there exists $(h_{n})_{n \in \N} \subset C^{\infty}_{c}(M)$ with $h_n = 0$ on $\partial M$, such that $h_n \rightarrow h$ in $H^{1}(M)$. It is immediate to verify that
\[
\int_{M} \| \grad f \|^2 = \int_{M} \| \grad (\mathcal{H}f) \|^2 + \int_{M} \| \grad h \|^2 - 2 \int_{M} \< \grad (\mathcal{H}f) , \grad h \>,
\]
while
\[
\int_{M}\< \grad (\mathcal{H}f) , \grad h \> = \lim_{n} \int_{M} \< \grad (\mathcal{H}f) , \grad h_n \> = \lim_{n} \int_{\partial M} \nu(\mathcal{H}f) h_n = 0,
\]
which establishes the asserted inequality. \qed 
\end{proof}\medskip

From the proof, it is clear that the equality in the preceding lemma holds if and only if $M$ is compact and $f = \mathcal{H}f$. \medskip

\noindent{\emph{Proof of \Cref{bottom of Steklov}:}}
We know from \Cref{bottom of Friedrichs extension} and \Cref{symmetry} that
\[
\sigma_0(M) = \inf_{f} \frac{\<  \Lambda f ,f \>_{L^{2}(\partial M)}}{\| f \|_{L^{2}(\partial M)}^2} = \inf_{f} \frac{\int_{M}\| \grad (\mathcal{H}f)  \|^2}{\int_{\partial M} f^2},
\]
where the infimum is taken over all non-zero $f \in C^{\infty}_c(\partial M)$.
This is less or equal to the asserted infimum, by virtue of \Cref{harmonmic extension minimizes energy}. The equality follows from \Cref{approximation}, according to which, the square-integrable harmonic extension can be approximated in $H^{2}(M)$ by compactly supported smooth extensions.\qed\medskip

A straightforward consequence of \Cref{bottom of Steklov} is the following relation between the bottom of the Steklov and the Neumann spectrum.

\begin{theorem}\label{Steklov and Neumann}
If $\sigma_0(M) = 0$, then $\lambda_0^N(M) = 0$.
\end{theorem}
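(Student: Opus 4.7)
The plan is to use the variational characterizations of $\sigma_0(M)$ and $\lambda_0^N(M)$ given by \Cref{bottom of Steklov} and \Cref{bottom Neumann}, respectively, and to link them via the trace theorem of \Cref{Trace thm}. Concretely, assuming $\sigma_0(M) = 0$, \Cref{bottom of Steklov} yields a sequence $(f_n)_{n \in \N} \subset C^{\infty}_c(M)$ which is not identically zero on $\partial M$ and satisfies
\[
\frac{\int_M \|\grad f_n\|^2}{\int_{\partial M} f_n^2} \to 0.
\]
After rescaling, I may assume $\int_{\partial M} f_n^2 = 1$, so that $\int_M \|\grad f_n\|^2 \to 0$.

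To deduce $\lambda_0^N(M) = 0$ from \Cref{bottom Neumann}, I would then plug $(f_n)$ into the Neumann Rayleigh quotient $\mathcal{R}(f_n)$; the numerators already tend to zero, so the only issue is to prevent the denominators $\int_M f_n^2$ from collapsing. This is exactly where \Cref{Trace thm} enters: since $C^{\infty}_c(M) \subset H^1(M)$ and the restriction to $\partial M$ extends to a continuous map $H^1(M) \to L^2(\partial M)$, there exists a constant $C>0$ such that
\[
\|f_n\|_{L^2(\partial M)}^2 \leq C \bigl( \|f_n\|_{L^2(M)}^2 + \|\grad f_n\|_{L^2(M)}^2 \bigr).
\]
With the normalization above, this rearranges to $\|f_n\|_{L^2(M)}^2 \geq C^{-1} - \|\grad f_n\|_{L^2(M)}^2$, which is bounded below by a positive constant for $n$ large.

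Combining these two ingredients gives
\[
\mathcal{R}(f_n) = \frac{\int_M \|\grad f_n\|^2}{\int_M f_n^2} \longrightarrow 0,
\]
and hence $\lambda_0^N(M) = 0$ by \Cref{bottom Neumann} (noting that $\lambda_0^N(M) \geq 0$ is automatic since $\mathcal{R}$ is non-negative). There is no real obstacle here once bounded geometry is in force: the only potentially delicate point is the trace inequality for non-compact $M$, but this is precisely the content of \Cref{Trace thm}, which holds under the standing bounded geometry assumption. Thus the proof reduces to a short computation with the two variational formulas plus one invocation of the trace theorem.
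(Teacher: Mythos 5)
Your proof is correct and uses exactly the same ingredients as the paper's: the variational formulas of \Cref{bottom of Steklov} and \Cref{bottom Neumann} together with the trace theorem \Cref{Trace thm}. The only difference is presentational—the paper argues by contradiction (assuming $\lambda_0^N(M)>0$ to force $\|f_n\|_{L^2(M)}\to 0$ and then contradicting the boundary normalization via the trace map), while you run the same estimate directly by using the trace inequality to bound $\|f_n\|_{L^2(M)}$ from below.
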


\begin{proof}
We readily see from \Cref{bottom of Steklov} that there exists $(f_{n})_{n \in \N} \subset C^{\infty}_c(M)$ such that $\| f_{n} \|_{L^2(\partial M)} = 1$ and 
\[
\int_{M}\| \grad f_n \|^2 \rightarrow 0.
\]
Assume to the contrary that $\lambda_0^N(M) > 0$. Then \Cref{bottom Neumann} implies that
\[
\| f_{n} \|_{L^{2}(M)}^2 \leq \lambda_0^N(M)^{-1} \int_{M} \| \grad f_n \|^2 \rightarrow 0.
\]
This means that $f_n \rightarrow 0$ in $H^1(M)$ and thus, $f_n \rightarrow 0$ in $L^2(\partial M)$, by virtue of \Cref{Trace thm}, which is a contradiction. \qed
\end{proof}\medskip

Recall that according to \Cref{approximate eigenvalues}, the spectrum of a self-adjoint operator consists of the approximate eigenvalues of the operator. In general, given $f \in C^{\infty}_c(\partial M)$ and $\lambda \in \R$, it may be quite complicated to estimate the quantity $\| \Lambda f - \lambda f \|_{L^{2}(\partial M)}$. The next observation allows us to substitute the harmonic extension with any compactly supported smooth extension, and the error term is controlled in terms of the Laplacian of the chosen extension.

\begin{proposition}\label{approximation of test functions}
There exists $C>0$ such that
	\[
\| \Lambda f - \lambda f \|_{L^{2}(\partial M)} \leq \| \nu(f) - \lambda f \|_{L^{2}(\partial M)} + C \| \Delta f \|_{L^{2}(M)}
	\]
for any extension $f \in C^{\infty}_c(M)$ of any $f \in C^{\infty}_c(\partial M)$.
\end{proposition}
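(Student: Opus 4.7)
The plan is to write the harmonic extension $\mathcal{H}f$ as $f + h$ (as in \Cref{existence of harmonic extension}), where $f \in C^\infty_c(M)$ is the given extension and $h \in \mathcal{D}(\Delta^D)$ satisfies $\Delta h = -\Delta f$. On the boundary we have $\nu(\mathcal{H}f) = \nu(f) + \nu(h)$, so
\[
\Lambda f - \lambda f = (\nu(f) - \lambda f) + \nu(h)
\]
in $L^2(\partial M)$, and the triangle inequality reduces the desired estimate to showing
\[
\|\nu(h)\|_{L^2(\partial M)} \leq C \|\Delta f\|_{L^2(M)}
\]
with a constant $C$ independent of $f$ and $\lambda$. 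In other words, the whole content of the proposition is that the boundary normal derivative of the Dirichlet-regularizer $h$ is controlled by the data $\Delta f$.

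To establish this, I would first apply \Cref{Elliptic estimates}: since $h \in \mathcal{D}(\Delta^D)$ and $\Delta^D h = -\Delta f$,
\[
\|h\|_{H^2(M)} \leq C_1 \|\Delta^D h\|_{L^2(M)} = C_1 \|\Delta f\|_{L^2(M)}.
\]
Next, by \Cref{extension}, I extend $\nu$ to a bounded smooth vector field $N$ on $M$ with $\nabla N$ bounded. Because $N$ and $\nabla N$ are bounded, the pointwise estimate $|N(h)| \leq \|N\|_\infty |\nabla h|$ and $|\nabla(N(h))| \leq \|N\|_\infty |\nabla^2 h| + \|\nabla N\|_\infty |\nabla h|$ give
\[
\|N(h)\|_{H^1(M)} \leq C_2 \|h\|_{H^2(M)}.
\]
Finally, \Cref{Trace thm} yields
\[
\|\nu(h)\|_{L^2(\partial M)} = \|\res(N(h))\|_{L^2(\partial M)} \leq C_3 \|N(h)\|_{H^1(M)},
\]
and chaining the three inequalities produces a constant $C = C_1 C_2 C_3$ depending only on $M$ (through the bounded-geometry data and the extension $N$), as required.

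The main subtlety — more than an obstacle — is ensuring that the constant $C$ is truly uniform in $f$ and $\lambda$. This is automatic in each step: the elliptic estimate constant in \Cref{Elliptic estimates} depends only on $M$, the extension $N$ is fixed once and for all by \Cref{extension}, and the trace constant from \Cref{Trace thm} depends only on $M$. A minor point worth verifying is that the argument really gives $\nu(h) \in L^2(\partial M)$ with the stated bound, which requires that the trace theorem be applicable to the $H^1$-function $N(h)$ rather than only to smooth compactly supported functions; this is exactly the content of the extended trace operator produced in \Cref{Trace thm}. With these ingredients in place the proof is a direct chain of three inequalities after the decomposition $\mathcal{H}f = f + h$.
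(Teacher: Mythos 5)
Your proposal is correct and follows essentially the same route as the paper: both decompose $\mathcal{H}f = f + h$ with $h \in \mathcal{D}(\Delta^D)$ and $\Delta h = -\Delta f$, then chain the elliptic estimate of \Cref{Elliptic estimates}, the extension of $\nu$ to a bounded vector field $N$ with $\nabla N$ bounded from \Cref{extension}, and the trace theorem \Cref{Trace thm} to bound $\|\nu(h)\|_{L^2(\partial M)}$ by $C\|\Delta f\|_{L^2(M)}$. The uniformity of the constant in $f$ and $\lambda$ is handled exactly as in the paper.
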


\begin{proof}
Let $f \in C^{\infty}_c(M)$ be an extension of a given $f \in C^{\infty}_c(\partial M)$. Extending $\nu$ to a bounded smooth vector field $N$ on $M$ with $\nabla N$ bounded (as in \Cref{extension}), we obtain from \Cref{Trace thm} that there exists $C_1 > 0$ such that
\[
\| \nu(f) - \nu(\mathcal{H}f) \|_{L^{2}(\partial M)} \leq C_1 \| N(f) - N(\mathcal{H}f) \|_{H^{1}(M)} \leq C_1 C_{2} \| f - \mathcal{H}f \|_{H^{2}(M)},
\]
where $C_2$ is a constant depending on $N$. In view of \Cref{existence of harmonic extension}, it is apparent that $f - \mathcal{H}F \in \mathcal{D}(\Delta^D)$. Therefore, we derive from \Cref{Elliptic estimates} that there exists $C_3 > 0$ such that
\[
 \| f - \mathcal{H}f \|_{H^{2}(M)} \leq C_3 \| \Delta (f - \mathcal{H}f) \|_{L^{2}(M)} = C_3 \| \Delta f  \|_{L^{2}(M)}.
\]
We conclude that
\begin{eqnarray}
\| \Lambda f - \lambda f \|_{L^{2}(\partial M)} &\leq& \| \nu(f) - \lambda f \|_{L^{2}(\partial M)} + \| \nu(f) - \nu(\mathcal{H}f) \|_{L^2(\partial M)} \nonumber\\
&\leq& \| \nu(f) - \lambda f \|_{L^{2}(\partial M)} +  C_1C_2C_3 \| \Delta f \|_{L^{2}(M)},\nonumber
\end{eqnarray}
as we wished. \qed
\end{proof}

\section{Steklov spectrum under Riemannian coverings}\label{Coverings}

Throughout this section, we consider a Riemannian covering $p \colon M_2 \to M_1$ where $M_{1}$ is compact with boundary. It is not difficult to verify that $M_2$ has bounded geometry. Indeed, it is easily checked that properties (ii)-(iv) of \Cref{manifold with bounded geometry} are satisfied, after noticing that $B(\partial M_2 , r) = p^{-1}(B(\partial M_1 , r))$ for any $r>0$, while the validity of (i) is explained for instance in \cite[Lemma 4.2]{Mine2}. It is also important for our discussion that $\lambda_0^D(M_2) > 0$. This follows from \cite[Theorem 1.3]{BMP1}, keeping in mind that $\lambda_0^D(M_2)$ coincides with the bottom of the spectrum of the Laplacian on the interior of $M_2$. Hence, we may define the Steklov spectrum of $M_2$ as in the previous section. The aim of this section is to establish the following extension of \Cref{universal covering thm}. 

\begin{theorem}\label{covering thm}
Let $p \colon M_2 \to M_1$ be a Riemannian covering, where $M_1$ is compact with boundary. Then the following are equivalent:
\begin{enumerate}[topsep=0pt,itemsep=-1pt,partopsep=1ex,parsep=0.5ex,leftmargin=*, label=(\roman*), align=left, labelsep=0em]
\item $p$ is amenable,
\item the Steklov spectra satisfy the inclusion $\sigma(M_1) \subset \sigma(M_2)$,
\item the bottom of the Steklov spectrum is preserved; that is, $\sigma_0(M_2) = 0$.
\end{enumerate}
\end{theorem}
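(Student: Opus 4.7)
The strategy is to prove the cycle $(ii) \Rightarrow (iii) \Rightarrow (i) \Rightarrow (ii)$. For $(ii) \Rightarrow (iii)$, on the compact $M_1$ the constant function $1$ is its own square-integrable harmonic extension with vanishing normal derivative, so $0 \in \sigma(M_1)$; the inclusion forces $0 \in \sigma(M_2)$, and non-negativity of the Steklov operator (see \Cref{symmetry}) gives $\sigma_0(M_2) = 0$. For $(iii) \Rightarrow (i)$, chain \Cref{Steklov and Neumann} (which gives $\lambda_0^N(M_2) = 0$) with \Cref{Brooks Neumann} to conclude $p$ is amenable.

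The substantive direction is $(i) \Rightarrow (ii)$. Given $\lambda \in \sigma(M_1)$, \Cref{approximate eigenvalues} yields a sequence $(\phi_n) \subset C^{\infty}(\partial M_1)$ with $\|\phi_n\|_{L^2(\partial M_1)} = 1$ and $\eta_n := \|\Lambda \phi_n - \lambda \phi_n\|_{L^2(\partial M_1)} \to 0$. The harmonic extensions $F_n := \mathcal{H}\phi_n \in C^{\infty}(M_1)$ satisfy uniform bounds on $\|F_n\|_{L^2(M_1)}$, $\|\nabla F_n\|_{L^2(M_1)}$, and $\|\Lambda \phi_n\|_{L^2(\partial M_1)}$ (the gradient bound coming from the divergence formula and $\eta_n \to 0$). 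Their lifts $\tilde F_n := F_n \circ p$ and $\tilde \phi_n := \phi_n \circ p$ are smooth, with $\tilde F_n$ harmonic on $M_2$, having boundary trace $\tilde \phi_n$ and normal derivative $(\Lambda \phi_n)\circ p$. Fix $\bar x \in M_1$ and $r > 0$ large enough that $S_r$ generates $\pi_1(M_1)$ and $M_1 \subset B(\bar x, r/4)$, and pick $x \in p^{-1}(\bar x)$. For each $\epsilon > 0$, amenability together with \Cref{Folner} provides a finite $P \subset p^{-1}(\bar x)$ with $|Pg \smallsetminus P| < \epsilon |P|$ for every $g \in S_r$. Following the F\o lner-cut-off strategy of the Brooks-type proofs in \cite{Brooks, Mine2}, construct a smooth $\chi$ on $M_2$ with $\chi \equiv 1$ on a neighbourhood $U$ of $P$ of measure comparable to $|P|$, $\supp \chi \subset V$ slightly larger, and $|\nabla \chi|, |\Delta \chi|$ uniformly bounded. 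By \Cref{action of Gr}, the transition region $V \smallsetminus U$ corresponds to the monodromy-orbit boundary $\bigcup_{g \in S_r}(Pg \smallsetminus P)$, so bounded geometry bounds its volume and boundary area by $C\epsilon|P|$.

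The final step is to apply \Cref{approximation of test functions} to the extension $g_n := \chi \tilde F_n \in C^{\infty}_c(M_2)$ of $\chi \tilde \phi_n$. Harmonicity of $\tilde F_n$ gives $\Delta g_n = -2\langle \nabla \chi, \nabla \tilde F_n \rangle + (\Delta \chi)\tilde F_n$, supported in $V \smallsetminus U$; on $\partial M_2$, $\nu(g_n) - \lambda g_n = \chi[(\Lambda \phi_n - \lambda \phi_n)\circ p] + \nu(\chi)\tilde \phi_n$. Using that $p$ is a local isometry of bounded multiplicity on each $B(y, r)$, summing over $y \in P$ with the F\o lner estimate yields $\|\Delta g_n\|_{L^2(M_2)} + \|\nu(g_n) - \lambda g_n\|_{L^2(\partial M_2)} \leq C_1 \eta_n |P|^{1/2} + C_2 \epsilon^{1/2} |P|^{1/2}$, while the choice $M_1 \subset B(\bar x, r/4)$ forces $U \cap \partial M_2$ to contain an isometric copy of $\partial M_1$ near each $y \in P$, giving $\|\chi \tilde \phi_n\|_{L^2(\partial M_2)}^2 \geq c|P|$. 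Choosing $\epsilon$ small and then $n$ large makes the ratio of error to norm arbitrarily small, so \Cref{approximate eigenvalues} concludes $\lambda \in \sigma(M_2)$. I expect the main obstacle to be designing the F\o lner-type cut-off so that its transition region really has size $O(\epsilon |P|)$ rather than the naive $O(|P|)$ coming from individual ball surface areas, and verifying that all the constants $c, C_1, C_2$ are uniform in $n, \epsilon, P$ — the Brooks-type arguments in \cite{Brooks, Mine2} provide the prototype, which should adapt to the Steklov operator via the key estimate \Cref{approximation of test functions}.
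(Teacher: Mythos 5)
Your proposal follows essentially the same route as the paper: the cycle closes via \Cref{Steklov and Neumann} and \Cref{Brooks Neumann}, and the substantive direction $(i)\Rightarrow(ii)$ lifts an eigenfunction, truncates it by a F{\o}lner-based cut-off, and controls the error through \Cref{approximation of test functions}, with the transition-region estimate you flag as the main obstacle being exactly the content of the paper's \Cref{amenability} and its partition-of-unity construction. The only real difference is that the paper uses discreteness of $\sigma(M_1)$ to work with an exact eigenfunction ($\nu_1(f)=\lambda f$ pointwise), which makes your $\eta_n$-bookkeeping unnecessary and lets the boundary error term vanish identically on the region where the cut-off equals one.
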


Let $\tilde{M}$ be the universal covering space of $M_1$, consider the Riemannian coverings $p_i \colon \tilde{M} \to M_i$, and denote by $\Gamma_i$ the deck transformation group of $p_i$, $i=1,2$. It should be noticed that $p \circ p_2 = p_1$.

We begin by introducing the fundamental domains that will be used in the sequel. It is worth to point out that the Dirichlet fundamental domains used in \cite{BMP1,Mine} cannot be exploited in our setting, since we have to deal with integrals over the boundary.

Fix a finite, smooth triangulation of $M_1$ that induces a triangulation of $\partial M_1$, and for each full-dimensional simplex choose a lift on $\tilde{M}$, so that the union $F$ of their images is connected. The set $F$ is called a \textit{finite sided fundamental domain} of the covering $p_1$.
We readily see that $\Vol(\partial F) = \Area (\partial \tilde{M} \cap \partial F) = 0$ and the translates $gF$ of $F$ with $g \in \Gamma_1$ cover $\tilde{M}$. Here, $\partial F$ stands for the boundary of $F$ as a subset of $\tilde{M}$, and similarly below. It is also immediate to verify that
\[
\int_{F} (f\circ p_1) = \int_{M_1} f \text{ and } \int_{\partial \tilde{M} \cap F} (f \circ p_1) = \int_{\partial M_1} f
\]
for any $f \in C^{\infty}(M_1)$.

Choose $\tilde{x} \in F^\circ$ and set $x = p_1(\tilde{x})$. Given $y \in p^{-1}(x)$, there exists $g \in \Gamma_1$ such that $g\tilde{x} \in p_2^{-1}(y)$. Set $F_y = p_2(gF)$ and observe that it does not depend on the choice of $g$, since if $g^{\prime}\tilde{x} \in p_2^{-1}(y)$ for some $g^\prime \in \Gamma_1$, then $g^\prime g^{-1} \in \Gamma_2$. It is evident that the domains $F_y$ with $y \in p^{-1}(x)$ cover $M_2$ and $\diam(F_y) \leq \diam(F)$ for any $y \in p^{-1}(x)$. It is not hard to check that $\partial F_y \subset p_2(g \partial F)$, which implies that $\Vol(\partial F_y) = \Area(\partial M_2 \cap \partial F_y) = 0$.
From the fact that $p_1 \colon gF \to M_1$ and $p_1 \colon \partial \tilde{M} \cap \partial F \to \partial M_1$ are isometries up to sets of measure zero, we readily see that so are the restrictions $p \colon F_y \to M_1$ and $p \colon \partial M_2 \cap F  \to  \partial M_1$ for any $y \in p^{-1}(x)$. This yields that
\begin{equation}\label{integration covering}
\int_{F_y} (f\circ p) = \int_{M_1} f \text{ and } \int_{\partial M_2 \cap F_y} (f \circ p) = \int_{\partial M_1} f
\end{equation}
for any $f \in C^{\infty}(M_1)$ and $y \in p^{-1}(x)$. For any $y,z \in p^{-1}(x)$ with $y \neq z$, using that $F_y \cap F_z \subset \partial F_y$, we derive that
\begin{equation}\label{intersections covering}
\Vol(F_y \cap  F_z) = \Area(\partial M_2 \cap F_y \cap  F_z) = 0.
\end{equation}

We now construct a partition of unity on $M_{2}$, which will be used to obtain cut-off functions. To this end, we will exploit the following.

\begin{lemma}\label{cardinality of intersection}
Given $r>0$, there exists $k(r) \in \N$ such that for any $z \in M_2$ the open ball $B(z,1)$ intersects at most $k(r)$ of the closed balls $C(y,r)$ with $y \in p^{-1}(x)$.
\end{lemma}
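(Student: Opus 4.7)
The plan is to count the fiber points by lifting to the universal cover $\tilde{M}$ and exploiting proper discontinuity of the $\Gamma_1$-action. The reduction is immediate: if $C(y,r) \cap B(z,1) \neq \emptyset$, then $d_{M_2}(y,z) < r+1$, so it suffices to bound uniformly in $z \in M_2$ the quantity $N(z) := \#\bigl(p^{-1}(x) \cap B(z,r+1)\bigr)$.

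Fix $\tilde{z} \in p_2^{-1}(z)$. For each $y$ contributing to $N(z)$, lifting a path of length less than $r+1$ from $z$ to $y$ starting at $\tilde{z}$ produces a point $\tilde{y} \in p_2^{-1}(y) \subset p_1^{-1}(x) = \Gamma_1 \tilde{x}$ with $d_{\tilde{M}}(\tilde{y}, \tilde{z}) < r+1$. Distinct $y$'s produce lifts with distinct $p_2$-images, hence distinct $\tilde{y}$'s, so
\[
N(z) \leq \#\bigl\{g \in \Gamma_1 : d_{\tilde{M}}(g\tilde{x}, \tilde{z}) < r+1\bigr\}.
\]

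To make this bound independent of $z$, I would use a translation trick. Choose $g_0 \in \Gamma_1$ with $\tilde{z} \in g_0 F$, so that $d_{\tilde{M}}(g_0^{-1}\tilde{z}, \tilde{x}) \leq D := \diam(F)$. Since $\Gamma_1$ acts by isometries on $\tilde{M}$, the substitution $h = g_0^{-1}g$ gives $d(g\tilde{x}, \tilde{z}) = d(h\tilde{x}, g_0^{-1}\tilde{z})$, whence every such $h\tilde{x}$ lies in $B(\tilde{x}, r+1+D)$. Therefore
\[
N(z) \leq \#\bigl\{h \in \Gamma_1 : h\tilde{x} \in B(\tilde{x}, r+1+D)\bigr\} =: k(r),
\]
which is finite by proper discontinuity of $\Gamma_1 \curvearrowright \tilde{M}$ together with compactness of the closed ball $\overline{B(\tilde{x}, r+1+D)}$ (using that $\tilde{M}$, as a covering of the compact $M_1$, is complete). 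Since $D$ and $\tilde{x}$ are fixed, $k(r)$ depends only on $r$, which is what was needed.

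The main obstacle is conceptual rather than technical: one must carefully keep straight the fiber $p^{-1}(x) \subset M_2$ and the $\Gamma_1$-orbit $\Gamma_1 \tilde{x} \subset \tilde{M}$, and verify that committing to a fixed lift $\tilde{z}$ of $z$ produces an injection from the fiber points of interest into $\Gamma_1 \tilde{x}$ even though $p_2 \colon \Gamma_1\tilde{x} \to p^{-1}(x)$ is many-to-one. The translation step then upgrades the $\tilde{z}$-dependent bound into a uniform one around the fixed basepoint $\tilde{x}$.
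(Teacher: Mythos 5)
Your argument is correct. It reaches the same kind of bound as the paper, but by a noticeably different route: the paper never leaves $M_2$ and $M_1$ --- it takes minimizing geodesics $\gamma_i$ from $z$ to the relevant $y_i$, forms the concatenations $(p\circ\gamma_i)*(p\circ\gamma_1^{-1})$, observes these are pairwise non-homotopic loops at $x$ of length less than $2r+2$, and concludes $k\leq|S_{2r+2}|$, leaning on the finiteness of $S_{2r+2}$ already recorded in the preliminaries. You instead pass to the universal cover, inject the fiber points near $z$ into the orbit $\Gamma_1\tilde{x}$ via path lifting, and recentre at $\tilde{x}$ with the translation $g_0^{-1}$ using a fundamental domain, at the cost of having to justify afresh why $\Gamma_1\tilde{x}\cap B(\tilde{x},r+1+D)$ is finite. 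Your justification is fine, with one small caveat: since $\tilde{M}$ has boundary, the compactness of the closed ball $\overline{B(\tilde{x},r+1+D)}$ is not the classical Hopf--Rinow theorem but its length-space version (complete, locally compact length spaces are proper), or, more in the spirit of the paper, one can simply note via \Cref{action of Gr} applied to $p_1$ that this count is $|S_{r+1+D}|$, whose finiteness is already assumed. The paper's version is shorter and avoids the fundamental domain entirely; yours has the minor advantage of using arbitrary short paths rather than minimizing geodesics (whose existence in a manifold with boundary would itself require the completeness/properness discussion), and it makes the role of proper discontinuity explicit. Both yield a constant $k(r)$ depending only on $r$, as required.
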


\begin{proof}
Suppose that $B(z,1)$ intersects the closed balls $C(y_i,r)$ with $y_i \in p^{-1}(x)$ pairwise different and let $\gamma_i$ be a minimizing geodesic from $z$ to $y_i$, $i=1,\dots,k$. Then the concatenations $(p \circ \gamma_i) * (p \circ \gamma_1^{-1})$ are pairwise non-homotopic loops based at $x$ of length less than $2r+2$. We conclude that $k \leq |S_{2r+2}|$.\qed
\end{proof}\medskip

Consider $r>0$ such that $F \subset B(\tilde{x},r)$ and $S_r$ generates $\pi_1(M_1)$. Fix a non-negative $\psi \in C^{\infty}_{c}(\tilde{M})$ with $\psi = 1$ in $B(\tilde{x},r)$ and $\supp \psi \subset B(\tilde{x},r+1)$. Given $y \in p^{-1}(x)$, choose $g \in \Gamma_1$ such that $g\tilde{x} \in p_2^{-1}(y)$ and set
\begin{equation}\label{pushdown}
\psi_{y}(z) = \sum_{w \in p_{2}^{-1}(z)} (\psi \circ g^{-1})(w) = \sum_{g^\prime \in \Gamma_2} (\psi \circ g^{-1} \circ g^\prime) (w_0),
\end{equation}
for some fixed $w_0 \in p_2^{-1}(z)$. Since $\psi$ is compactly supported, we readily see that $\psi_y$ is well-defined, smooth, non-negative, $\psi_y \geq 1$ in $B(y,r)$ and $\supp \psi_y \subset B(y,r+1)$. It follows from \Cref{cardinality of intersection} (applied to the universal covering $p_1$) that there exists $\tilde{k} \in \N$ such that locally at most $\tilde{k}$ terms in the right-hand side of (\ref{pushdown}) are non-zero. Thus, we deduce that there exists $C_1 >0 $ such that
\begin{equation*}
\|\grad \psi_y \| \leq C_1  \text{ and } |\Delta \psi_y | \leq C_1
\end{equation*}
for any $y \in p^{-1}(x)$.

According to \Cref{cardinality of intersection}, there exists $k \in \N$ such that at most $k$ of the supports of $\psi_y$ with $y \in p^{-1}(x)$ intersect any ball of radius one in $M_{2}$. Therefore, the function
\[
\psi = \sum_{y \in p^{-1}(x)} \psi_y
\]
is well-defined, smooth, and greater or equal to one. Moreover, we obtain that $\psi$, $\grad \psi$ and $\Delta \psi$ are bounded. 

Consider now the smooth partition of unity on $M_2$ consisting of the functions
\[
\vf_y = \frac{\psi_y}{\psi},
\]
with $y \in p^{-1}(x)$. It is clear that $\supp \vf_y \subset B(y,r+1)$, $\vf_y > 0$ in $B(y,r)$, while $\grad \vf_y$ and $\Delta \vf_y$ are uniformly bounded for all $y \in p^{-1}(x)$. Given a finite subset $P$ of $p^{-1}(x)$, define the function $\chi_P \in C^{\infty}_c(M_2)$ by
\[
\chi_P = \sum_{y \in P} \vf_y
\]
and the sets
\begin{eqnarray}
Q_+ &=& \{ y \in p^{-1}(x) : \chi = 1 \text{ in } F_y \}, \nonumber \\
Q_- &=& \{ y \in p^{-1}(x) : 0 < \chi(z) < 1 \text{ for some } z \in F_y \}. \nonumber
\end{eqnarray}
Since at most $k$ of the supports of $\vf_y$ interest any ball of radius one in $M_2$, it is easy to see that exists $C_2> 0$ such that
\begin{equation}\label{uniform estimates}
\| \grad \chi_P \| \leq C_2 \text{ and } | \Delta \chi_P | \leq C_2 
\end{equation}
for any finite subset $P$ of $p^{-1}(x)$. Furthermore, the sets $Q_+$ and $Q_-$ are finite, $\chi_P$ being compactly supported

The following proposition illustrates how amenability of the covering is related to our construction.

\begin{proposition}\label{amenability}
If $p \colon M_2 \to M_1$ is amenable, then for any $\ve>0$ there exists a finite subset $P$ of $p^{-1}(x)$ such that $|Q_-| < \varepsilon |Q_+|$.
\end{proposition}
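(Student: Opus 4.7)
The approach is to apply F\o lner's criterion (\Cref{Folner}) to the finite symmetric generating set $S_R$, where $R := r + 1 + \diam(F)$, after translating the geometric conditions defining $Q_+$ and $Q_-$ into combinatorial statements about the monodromy action of $\pi_1(M_1)$ on $p^{-1}(x)$.

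The crucial geometric observation is the following. If $\vf_{y'}(z) > 0$ for some $z \in F_y$, then $z \in \supp \vf_{y'} \subset B(y', r+1)$, so $d(z,y') < r+1$; combined with $d(y,z) \leq \diam(F_y) \leq \diam(F)$ this gives $d(y,y') < R$, and \Cref{action of Gr} produces $g \in S_R$ with $y' = yg$ (using that $e \in S_R$ in the trivial case $y'=y$). Consequently, if $y \cdot S_R \subset P$, then at every $z \in F_y$ only terms $\vf_{y'}$ with $y' \in P$ contribute to $\chi_P(z)$, and since $\sum_{y' \in p^{-1}(x)} \vf_{y'} \equiv 1$ this forces $\chi_P(z) = 1$, so $y \in Q_+$. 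Conversely, if $y \in Q_-$, pick $z \in F_y$ with $0 < \chi_P(z) < 1$: the inequality $\chi_P(z) > 0$ produces some $y_1 \in P$ with $\vf_{y_1}(z) > 0$, and $\chi_P(z) < 1$ produces some $y_2 \notin P$ with $\vf_{y_2}(z) > 0$; by the previous sentence there exist $g_1, g_2 \in S_R$ with $yg_1 \in P$ and $yg_2 \notin P$.

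Given $\ve > 0$, I would then apply \Cref{Folner} to the generating set $S_R$ to select, for a constant $\ve' > 0$ to be fixed later, a finite nonempty $P \subset p^{-1}(x)$ with $|Pg \smallsetminus P| < \ve'|P|$ for every $g \in S_R$. Set $I := \{y \in p^{-1}(x) : y \cdot S_R \subset P\}$; by the first observation $I \subset Q_+$, and the estimate
\[
|P \smallsetminus I| \leq \sum_{g \in S_R} |\{y \in P : yg \notin P\}| = \sum_{g \in S_R} |Pg \smallsetminus P| < |S_R|\,\ve'\,|P|
\]
yields $|Q_+| \geq (1 - |S_R|\ve')|P|$. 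Splitting $Q_-$ according to whether $y \in P$, the second observation gives $Q_- \subset (P \smallsetminus I) \cup (PS_R \smallsetminus P)$, and $|PS_R \smallsetminus P| < |S_R|\ve'|P|$ by the same F\o lner estimate, so $|Q_-| \leq 2|S_R|\ve'|P|$. Choosing $\ve'$ small enough depending on $\ve$ and $|S_R|$ forces $|Q_-| < \ve|Q_+|$.

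The main obstacle is pinning down the right radius $R$ and verifying the metric-to-combinatorial dictionary: one must ensure that every interaction between the cutoffs $\vf_{y'}$ and a fundamental domain $F_y$ is detected by right-multiplication of $y$ by an element of $S_R$, and that the strict inequality of \Cref{action of Gr} applies. Once this is in place, the remainder is a routine F\o lner counting argument.
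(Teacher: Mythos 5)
Your proof is correct and follows essentially the same route as the paper: F\o lner's criterion applied to a generating set $S_R$ with $R$ chosen so that \Cref{action of Gr} converts the overlap conditions between $\supp \vf_{y'}$ and $F_y$ into statements about the monodromy action, followed by the standard counting of $P \smallsetminus \bigcap_g Pg$ and of the boundary set. Your bookkeeping for $Q_-$ (splitting by whether $y \in P$ and absorbing the two cases into $P \smallsetminus I$ and $PS_R \smallsetminus P$) is a slightly cleaner variant of the paper's count, which instead bounds the number of $y \in Q_-$ associated to each pair $(y_1,g)$ with $y_1 \in P \smallsetminus Pg$ and picks up a factor $|S_{d/2}||S_d|$; both yield the required estimate.
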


\begin{proof}
Set $d = 2(r + 1 + \diam (F))$. We know from \Cref{Folner} that for any $\ve > 0$ there exists a non-empty, finite subset $P$ of $p^{-1}(x)$ such that
\[
|Pg \smallsetminus P| < \ve |P|
\]
for any $g \in S_{d}$. Consider the corresponding function $\chi_P$ and let $y \in P$ such that $yg \in P$ for any $g \in S_{d}$. Let $z \in F_{y}$ and $y^\prime \in p^{-1}(x)$ such that $z \in \supp \vf_{y^\prime}$. Then $d(z,y^\prime) < r+1$ and thus, $d(y,y^\prime) < d/2$, because $\diam(F_y) \leq \diam(F)$. \Cref{action of Gr} shows that there exists $g \in S_{d}$ such that $y^\prime = yg \in P$. Since $\{\vf_{y^\prime}\}_{y^\prime\in p^{-1}(x)}$ is a partition of unity on $M_2$, we deduce that $y \in Q_+$. This yields that
\[
|P \cap (\cap_{g \in S_{d}} Pg)| \leq |Q_+|,
\]
and thus,
\[
|Q_{+}| \geq |P| - | \cup_{g \in S_{d}} (P \smallsetminus Pg) | \geq
(1 - \ve | S_{d}|) |P|,
\]
where we used that $S_{d}$ is symmetric.

Consider now $y \in Q_-$ and $z \in F_y$ such that $0 < \chi_P(z) < 1$. Then there exist $y_1 \in P$ and $y_2 \in p^{-1}(x) \smallsetminus P$ such that $\vf_i (z) > 0$, $i=1,2$. This implies that $d(z,y_i) < r + 1$, and hence, $d(y,y_i) < d/2$, $i=1,2$. In particular, we obtain that $d(y_1,y_2) < d$ and there exists $g \in S_d$ such that $y_1 = y_2 g \in P \smallsetminus Pg$, in view of \Cref{action of Gr}. Since $d(y,y_1) < d/2$, we derive from \Cref{action of Gr} that for such a $y$ there exists at most $|S_{d/2}|$ such $y_1$. This gives the estimate
\[
|Q_-| \leq \sum_{g \in S_d} |S_{d/2}| |P \smallsetminus Pg| < \ve |S_{d/2}| |S_d| |P|.
\]
Combining the above, for $\ve > 0$ sufficiently small, we conclude that
\[
\frac{| Q_- |}{|Q_+|} < \frac{\ve |S_{d/2}| |S_d|}{1 - \ve |S_d|},
\]
which completes the proof. \qed
\end{proof}

\medskip

\noindent{\emph{Proof of \Cref{covering thm}:}} Suppose first that the covering $p \colon M_2 \to M_1$ is amenable. Since $M_1$ is compact, its Steklov spectrum is discrete. Therefore, for any $\lambda \in \sigma(M_1)$, there exists a harmonic $f \in C^{\infty}(M_1)$  with $\| f \|_{L^{2}(\partial M_1)} = 1$ such that $\nu_1(f) = \lambda f$ on $\partial M_1$, where $\nu_1$ is the outward pointing unit normal to $\partial M_1$. Denote by $\tilde{f} = f \circ p$ the lift of $f$ on $M_2$ and, given a a non-empty, finite subset $P$ of $p^{-1}(x)$, consider the function $f_P = \chi_P \tilde{f} \in C^{\infty}_c(M_2)$.

For $y \in Q_{+}$, keeping in mind (\ref{integration covering}) and that $f_P = \tilde{f}$ in $F_y$, we readily see that
\[
\int_{F_y} (\Delta f_P)^2 = 0 \text{, } \int_{\partial M_2 \cap F_y } ( \nu_2(f_P) - \lambda f_P)^2 = 0 \text{ and } \int_{\partial M_2 \cap F_y} f_P^2 = 1,
\]
where $\nu_2$ is the outward pointing unit normal to $\partial M_2$. Fix now $y \in Q_-$. Using (\ref{integration covering}) and that $\tilde{f}$ is harmonic, we compute
\begin{eqnarray}
\int_{F_y} (\Delta f_P)^2 &=& \int_{F_y} (\tilde{f} \Delta \chi_P - 2 \< \grad \tilde{f} , \grad \chi_P \>)^2 \nonumber \\
&\leq& 2 \int_{F_y} \tilde{f}^2 (\Delta \chi_P)^2 + 8 \int_{F_y} \| \grad \tilde{f} \|^2 \| \grad \chi_P \|^2 \nonumber\\
&\leq& 2C_2^2 \| f \|_{L^{2}(M_1)}^2 + 8 C_2^2 \int_{M_1} \| \grad f \|^2, \nonumber
\end{eqnarray}
where $C_2$ is the constant from (\ref{uniform estimates}). Moreover, we deduce that
\[
\int_{\partial M_{2} \cap F_y} (\nu_2(f_P) - \lambda f_p)^2 = \int_{\partial M_2 \cap F_y} (\nu_2(\chi_P)\tilde{f})^2 \leq C_2^2
\]
where we used that $\nu_2(\tilde{f}) = \lambda \tilde{f}$ on $\partial M_{2}$, $\| f \|_{L^{2}(\partial M_1)} = 1$ and (\ref{integration covering}). 

Furthermore, we derive from (\ref{integration covering}) and (\ref{intersections covering}) that
\[
\| f_P \|_{L^{2}(\partial M_{2})}^2 \geq \sum_{y \in Q_+} \int_{\partial M_{2} \cap F_y} f_P^2 = |Q_+|.
\]
Combining the above estimates, together with (\ref{intersections covering}) and \Cref{approximation of test functions}, yields that
\begin{eqnarray}
\| \nu(\mathcal{H}f_P) - \lambda f_P \|_{L^{2}(\partial M_2)}^2 &\leq& 2\| \nu(f_p) - \lambda f_p \|_{L^{2}(\partial M_2)}^2 + 2C^2 \| \Delta f_p \|_{L^{2}(M_2)}^2 \nonumber\\
&=& 2\sum_{y \in Q_{-}} \bigg( \int_{\partial M_{2} \cap F_y} (\nu(f_P) - \lambda f_P)^2 + C^2 \int_{F_y} (\Delta f_P)^2 \bigg) \nonumber \\
&\leq& 2C_2^2|Q_-| \big(1 + 2C^2 \| f \|_{L^{2}(M_1)}^2 + 8 C^2 \int_{M_1}\| \grad f \|^2\big).\nonumber
\end{eqnarray}
Since the constants involved in these estimates are independent from $P$, it follows from \Cref{amenability} that for any $\ve > 0$ there exists a finite subset $P$ of $p^{-1}(x)$ such that
\[
\| \nu(\mathcal{H}f_P) - \lambda f_P \|_{L^{2}(\partial M_2)}^2 < \ve \| f_P \|_{L^{2}(\partial M_{2})}^2.
\]
We conclude from \Cref{approximate eigenvalues} that $\lambda \in \sigma(M_2)$, $\ve > 0$ being arbitrary.

It is clear from \Cref{bottom of Steklov} that $\sigma_0(M_2) \geq 0$. Therefore, if $\sigma(M_1) \subset \sigma(M_2)$, then $\sigma_0(M_2) = 0$, because $\sigma_0(M_1) = 0$. Suppose now that $\sigma_0(M_2) = 0$. Then \Cref{Steklov and Neumann} states that $\lambda_0^N(M_2) = 0$, and thus, the covering is amenable, in view of \Cref{Brooks Neumann}. \qed

\section{Steklov spectrum under Riemannian submersions}\label{submersions}

The aim of this section is to establish \Cref{Submersion thm}. Throughout, we consider a Riemannian submersion $p \colon M_2 \to M_1$ arising from the action of a connected Lie group $G$, where $M_1$ is compact with boundary, and denote by $\nu_i$ the outward pointing unit normal to $\partial M_i$, $i=1,2$. For more details on Riemannian submersions, see \cite{MR2110043,MR1707341}. We begin by showing that $M_2$ has bounded geometry and $\lambda_0^D(M_2) > 0$.

\begin{proposition}\label{bounded geometry}
In the aforementioned setting, $M_2$ has bounded geometry.
\end{proposition}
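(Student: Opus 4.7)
My plan is to exploit the $G$-invariance of all the relevant geometric data on $M_2$, combined with the compactness of $M_1 = M_2/G$ and $\partial M_1 = \partial M_2/G$. I will check the four conditions (i)--(iv) of \Cref{manifold with bounded geometry} in turn.

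Condition (iv) is immediate: the smooth functions $\| \nabla^k R \|$ on $M_2$ and $\| \nabla^k \alpha \|$ on $\partial M_2$ are preserved by isometries, hence are $G$-invariant. They descend to continuous functions on the compact sets $M_1$ and $\partial M_1$, and are therefore bounded, which supplies the constants $C_k$.

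For (ii) and (iii), I will use a finite-cover argument. Given $\tilde{x}_0 \in \partial M_2$, on a small open neighborhood $V$ the intrinsic injectivity radius of $\partial M_2$ is bounded below by some $r_V > 0$, and by $G$-invariance the same bound holds on every translate $gV$. Finitely many projections $p(V_i)$ cover the compact set $\partial M_1$, and the minimum of the corresponding $r_{V_i}$ then gives a uniform positive injectivity radius for $\partial M_2$. An identical argument, applied to points lying outside a $G$-invariant tubular neighborhood of $\partial M_2$ (whose complement projects to the compact set $M_1 \smallsetminus B(\partial M_1, r_0)$), yields (iii).

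For (i), I will combine this finite-cover technique with a Riemannian submersion argument. Since $\nu_2$ is $G$-invariant, the normal exponential map $\Phi \colon \partial M_2 \times [0, \infty) \to M_2$, $(x,t) \mapsto \exp_x(-t\nu_2)$, is $G$-equivariant, and the same finite-cover reasoning produces a uniform $r > 0$ on which $\Phi$ is a local diffeomorphism. For global injectivity I will use that $\partial M_2 = p^{-1}(\partial M_1)$ is $p$-saturated, so $\nu_2$ is horizontal; by O'Neill's theorem a normal geodesic in $M_2$ stays horizontal and projects under $p$ to a normal geodesic in $M_1$. Since $M_1$ is compact, it admits a collar of some width $r_1 > 0$; after shrinking $r$ to $\min(r, r_1)$, the equality $\Phi(x_1, t_1) = \Phi(x_2, t_2)$ forces $p(x_1) = p(x_2)$ and $t_1 = t_2$, so $x_2 = g x_1$ for some $g \in G$. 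Equivariance then shows that $g$ fixes $\Phi(x_1, t_1)$, and freeness of the $G$-action gives $g = e$ and $x_1 = x_2$. The main obstacle I anticipate is exactly this global injectivity step in (i); all the remaining work is a direct consequence of $G$-invariance together with compactness of the quotient.
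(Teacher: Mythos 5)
Your proposal is correct and follows essentially the same route as the paper: conditions (ii)--(iv) by $G$-invariance plus compactness of the quotient, and condition (i) by combining a uniform collar obtained from a precompact fundamental piece of $\partial M_2$ with the fact that the normal geodesics are horizontal and project to the normal geodesics of the compact base. The only (cosmetic) difference is in the final injectivity step, where you invoke equivariance and freeness of the action to get $g = e$, while the paper instead concludes via uniqueness of horizontal lifts of the coinciding projected geodesics through the common endpoint; both are valid.
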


\begin{proof}
Bearing in mind that $G$ acts on $M_2$ via isometries, it is easy to verify properties (ii)-(iv) of \Cref{manifold with bounded geometry}, after noticing that $B(\partial M_2, r) = p^{-1}(B(\partial M_1 , r))$ for any $r>0$. To check the validity of (i), observe that there exists $r > 0$ such that the map
	\[
	\exp \colon \partial M_1 \times [0,r) \to M_1, \text{ } (y,t) \mapsto \exp_y(-t \nu_1)
	\]
	is a diffeomorphism onto its image, $M_1$ being compact. Choose a precompact, open $U \subset \partial M_2$ such that $GU = \partial M_2$, and consider $r^\prime \leq r$ such that
	\[
	\exp \colon U \times [0,r^\prime) \to M_2, \text{ } (z,t) \mapsto \exp_z(-t \nu_2)
	\]
	is a diffeomorphism onto its image. Since $G$ acts on $M_2$ via isometries, we derive that
	\[
	\exp \colon \partial M_2 \times [0,r^\prime) \to M_2, \text{ } (z,t) \mapsto \exp_z(-t \nu_2)
	\]
	is a local diffeomorphism onto its image. Hence, it remains to show that this map is injective. 
	
	To this end, let $z_1,z_2 \in \partial M_2$ and $t_1,t_2 \in [0,r^\prime)$ with $\exp_{z_1}(-t_1 \nu_2) = \exp_{z_2}(-t_2 \nu_2) = z$. From the fact that $\gamma_i(t) = \exp_{z_i}(-t \nu_2)$ is a horizontal geodesic, we readily see that $(p \circ \gamma_i)(t) = \exp_{p(z_i)}(- t \nu_1)$, $i=1,2$. Since $r^\prime \leq r$, this yields that $p(z_1) = p(z_2)$ and $t_1 = t_2$. In particular, the geodesics $p \circ \gamma_i$ coincide, and hence, so do their horizontal lifts $\gamma_i$ with endpoint $z$. We conclude that $z_1 = z_2$ and $t_1 = t_2$, as we wished. \qed
\end{proof}\medskip

Consider the Schr\"odinger operator
\begin{equation}\label{Schrodinger}
S = \Delta + \frac{1}{4} \| p_* H \|^2 - \frac{1}{2} \diver p_*H
\end{equation}
on $M_1$. We know from \cite[Theorem 1.1]{Mine3} that $\lambda_0^D(M_2) \geq \lambda_0^D(S)$. It is worth to mention that \cite[Theorem 1.1]{Mine3} is formulated for manifolds without boundary, which however do not have to be complete. Hence, the assertion is readily extends to manifolds with boundary.

\begin{proposition}\label{S positive definite}
In the aforementioned setting, we have that $\lambda_0^D(S) > 0$.
\end{proposition}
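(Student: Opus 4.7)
The plan is to reduce the claim to a pointwise completion-of-squares identity for the Rayleigh quotient of $S$, and then rule out the equality case by a first-order ODE argument.

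For any $f \in C^\infty_c(M_1)$ vanishing on $\partial M_1$, the Dirichlet boundary condition kills the boundary terms in two integrations by parts: one against $\Delta f$ and one against $(\diver p_*H)\, f^2$. The latter gives $-\tfrac12 \int_{M_1} (\diver p_*H) f^2 = \int_{M_1} f \<p_*H, \grad f\>$, and combining with the remaining $\tfrac14\|p_*H\|^2 f^2$ term I would obtain
\begin{align*}
\int_{M_1} Sf \cdot f &= \int_{M_1} \|\grad f\|^2 + \int_{M_1} f \<p_*H, \grad f\> + \tfrac{1}{4}\int_{M_1} \|p_*H\|^2 f^2 \\
&= \int_{M_1} \left\| \grad f + \tfrac{1}{2}\, f\, p_*H \right\|^2.
\end{align*}
Together with \Cref{bottom Dirichlet}, this already yields $\lambda_0^D(S) \geq 0$.

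To upgrade this to strict positivity, I would argue by contradiction using compactness. Since $M_1$ is compact with boundary and $S$ is a Schr\"odinger operator with smooth, hence bounded, potential, the Dirichlet realization $S^D$ has compact resolvent and discrete spectrum; thus $\lambda_0^D(S)$ is attained by a non-zero smooth eigenfunction $f$ with $f|_{\partial M_1} = 0$, by elliptic regularity. If $\lambda_0^D(S) = 0$, the identity above forces $\grad f + \tfrac{1}{2} f\, p_*H \equiv 0$ pointwise on $M_1$. Along any smooth curve $\gamma\colon [0,1] \to M_1$ joining an interior point $\gamma(0)$ to a boundary point $\gamma(1) \in \partial M_1$, the function $u(t) = f(\gamma(t))$ then satisfies the linear ODE $u^\prime(t) = -\tfrac{1}{2}\, u(t)\, \<p_*H, \dot\gamma(t)\>$, whose only solution with $u(1) = 0$ is $u \equiv 0$. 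Since $M_1$ is connected with non-empty boundary, every point can be joined to $\partial M_1$ this way, so $f \equiv 0$, a contradiction.

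The main obstacle is getting the completion-of-squares identity right; once it is in hand, the rest is routine. If one prefers to avoid appealing to discreteness of the Dirichlet spectrum, essentially the same contradiction can be extracted from a minimizing sequence for the Rayleigh quotient: weak compactness in $H^1_0(M_1)$ and Rellich produce an $L^2$-normalized weak limit $f$ which, by the $L^2$-convergence of $\grad f_n + \tfrac12 f_n p_*H$ to $0$, is a weak (hence by regularity smooth) solution of $\grad f + \tfrac12 f\, p_*H = 0$, and is then propagated to zero along curves exactly as above.
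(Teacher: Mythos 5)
Your proof is correct, and its first two thirds coincide with the paper's: the same completion-of-squares identity $\langle Sf,f\rangle_{L^2(M_1)}=\int_{M_1}\|\grad f+\tfrac{f}{2}\,p_*H\|^2$ for Dirichlet test functions gives $\lambda_0^D(S)\geq 0$, and both arguments then invoke discreteness of the Dirichlet spectrum on the compact $M_1$ to produce an eigenfunction realizing the equality case. You diverge only in how the equality case is ruled out. The paper takes the ground state $f$, positive in $M_1^{\circ}$, rewrites the vanishing of the square as $p_*H=-2\grad\ln f$, and derives a contradiction from the boundedness of $p_*H$ versus the blow-up of $\grad\ln f$ as $f\to 0$ at the boundary. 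You instead read $\grad f+\tfrac12 f\,p_*H\equiv 0$ as a linear first-order ODE for $u(t)=f(\gamma(t))$ along curves hitting $\partial M_1$ and propagate $f=0$ inward by ODE uniqueness. Your version is slightly more robust: it needs only that the eigenfunction is non-zero and smooth up to the boundary, not that the ground state is positive in the interior (a fact the paper implicitly leans on), and the bounded-coefficient linear ODE makes the contradiction completely elementary. The paper's version is shorter on the page but hides the same propagation idea in the phrase ``$p_*H$ is smooth while $f$ vanishes on $\partial M_1$''. Both are valid; your optional remark about replacing discreteness by a minimizing-sequence/Rellich argument is also sound, though unnecessary here.
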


\begin{proof}
	Given a non-zero $f \in C^{\infty}_{c}(M_1)$ with $f = 0$ on $\partial M_1$, we compute
	\begin{eqnarray}
		\< Sf , f \>_{L^{2}(M_1)} &=& \int_{M_1} (\| \grad f \|^2 + \frac{1}{4} \| p_*H \| f^2  - - \frac{1}{2} f^2 \diver p_*H) \nonumber \\
		&=& \int_{M_1} (\| \grad f \|^2 + \frac{1}{4} \| p_*H \| f^2  + \frac{1}{2} \< \grad f^2  , p_*H \>) \nonumber \\
		&=& \int_{M_1} \| \grad f  + \frac{f}{2}  p_*H \|^2,  \nonumber
	\end{eqnarray}
	where we used the divergence formula. In particular, we deduce that $\mathcal{R}_S(f) \geq 0$ for any such $f$, which means that $\lambda_0^D(S) \geq 0$, in view of \Cref{bottom Dirichlet}.
	
	Assume to the contrary that $\lambda_0^D(S) = 0$. Since $M_1$ is compact, the Dirichlet spectrum of $S$ is discrete. This yields that there exists $f \in C^{\infty}(M_1)$ positive in $M_1^\circ$ and vanishing on $\partial M_1$ such that $Sf = 0$, which implies that $\mathcal{R}_S(f) = 0$. From the preceding computation, we conclude that
	\[
	p_*H = - 2 \grad \ln f
	\]
	in $M_1^\circ$. This is a contradiction, since $p_*H$ is smooth on $M_1$, while $f$ vanishes on $\partial M_1$.\qed
\end{proof}\medskip

From the above, it follows that $M_2$ has bounded geometry and $\lambda_0^D(M_2)>0$. Therefore, we may define the Steklov spectrum of $M_2$ as in \Cref{Steklov spectrum}. 

The proof of \Cref{Submersion thm} relies on the methods of \cite{Mine3}. For convenience of the reader, we briefly discuss what will be used in the sequel. Given a section $s \colon U \subset M_1 \to M_2$, the map $\Phi \colon G \times U \to p^{-1}(U)$ defined by $\Phi(x,y) = xs(y)$ is a diffeomorphism. We denote by $g_{s(y)}$ the pullback of the Riemannian metric of the fiber $F_y = p^{-1}(y)$ via $\Phi(\cdot , y)$. Then $g_{s(y)}$ is a left-invariant metric depending smoothly on $y \in U$, according to \cite[Proposition 4.1]{Mine3}. The behavior of the volume elements of these metrics is illustrated in the following.

\begin{proposition}[{{\cite[Corollaries 4.2 and 4.3]{Mine3}}}]\label{volume functions}
Let $g$ be a fixed left-invariant metric on $G$. Given a section $s \colon U \subset M_1 \to M_2$, there exists $V_s \in C^{\infty}(U)$ such that the volume elements are related by
\[
d{\vol}_{g_{s(y)}} = V_s(y) d {\vol}_{g}.
\]
If, in addition, $G$ is unimodular, then there exists $V \in C^{\infty}(M)$ such that
\[
d{\vol}_{g_{s(y)}} = V(y) d {\vol}_{g}
\] 
for any section $s \colon U  \subset M_1 \to M_2$ and $y \in U$. Moreover, the gradient of $V$ is given by $\grad V = - V p_{*}H$.
\end{proposition}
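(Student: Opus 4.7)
The plan is to prove the three assertions in order, working throughout with the diffeomorphism $\Phi\colon G\times U\to p^{-1}(U)$, $\Phi(x,y)=xs(y)$, from \cite[Proposition 4.1]{Mine3}, and with the fundamental vector fields generated by the $G$-action. Fix a $g$-orthonormal basis $E_1,\dots,E_n$ of $\mathfrak{g}$ and let $E_i^*$ denote the associated fundamental vector field on $M_2$. For the first assertion, observe that $g_{s(y)}(E_i,E_j)|_e=\<E_i^*|_{s(y)},E_j^*|_{s(y)}\>_{M_2}$, and the matrix $G_{ij}(y)$ formed by these inner products is positive definite because $G$ acts freely. Left-invariance of $g_{s(y)}$ forces its volume element to be a constant multiple of $d\vol_g$ at each $y$, namely $V_s(y)=\sqrt{\det G_{ij}(y)}$, and smoothness of $s$ together with smoothness of the $E_i^*$ on $M_2$ gives $V_s\in C^\infty(U)$.

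For the second assertion, given two local sections $s,s'$ on $U$, freeness of the action provides a unique smooth $\phi\colon U\to G$ with $s'(y)=\phi(y)s(y)$. A direct computation shows $\Phi'_y=\Phi_y\circ R_{\phi(y)}$, so $g_{s'(y)}=R_{\phi(y)}^*g_{s(y)}$ and therefore $d\vol_{g_{s'(y)}}=R_{\phi(y)}^*d\vol_{g_{s(y)}}=V_s(y)R_{\phi(y)}^*d\vol_g$. Unimodularity of $G$ is exactly the statement that the left-invariant measure $d\vol_g$ is also right-invariant, so $R_{\phi(y)}^*d\vol_g=d\vol_g$ and $V_{s'}=V_s$. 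Patching the section-independent local functions over an open cover of $M_1$ by charts admitting sections yields a global $V\in C^\infty(M_1)$.

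The gradient formula is the core computation. Fix $y\in M_1$, $Y\in T_yM_1$, and let $\tilde Y$ be the horizontal lift of a smooth extension of $Y$ near $s(y)$; since the horizontal distribution is $G$-invariant, $\tilde Y$ is $G$-invariant, and consequently $[E_i^*,\tilde Y]=0$, giving $\n_{\tilde Y}E_i^*=\n_{E_i^*}\tilde Y$. Writing $V^2=\det G_{ij}$ along $s$, Jacobi's formula combined with these identities and with $\<\tilde Y,E_j^*\>=0$ yields
\[
Y(V^2)=2V^2\,G^{ij}\<\n_{E_i^*}\tilde Y,E_j^*\>=-2V^2\,G^{ij}\<\tilde Y,\n_{E_i^*}E_j^*\>.
\]
A standard change-of-frame argument identifies $G^{ij}(\n_{E_i^*}E_j^*)^{\mathrm h}$ (the horizontal component) with the mean curvature $H$ of the fibers at $z=s(y)$; only this component contributes because $\tilde Y$ is horizontal, and $p_*$ is an isometry on horizontal vectors, so $\<\tilde Y,H\>=\<Y,p_*H\>$. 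Dividing by $2V$ gives $\grad V=-Vp_*H$.

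The main obstacle is identifying $G^{ij}(\n_{E_i^*}E_j^*)^{\mathrm h}$ with $H$: this amounts to expressing the trace that defines the mean curvature in the (generally non-orthonormal) frame $\{E_i^*\}$, which one verifies by writing $E_i^*=A_{ki}e_k$ for a local vertical orthonormal frame $\{e_k\}$, so that $G=A^{\mathrm T}A$ and $G^{ij}A_{ki}A_{lj}=\delta_{kl}$, and noting that the extra terms from derivatives of $A$ are vertical and hence drop out after horizontal projection. The remaining ingredients—smoothness, the well-definedness of $p_*H$ through $G$-invariance of $H$, and the right-invariance of $d\vol_g$ under unimodularity—are standard.
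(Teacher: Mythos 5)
The paper does not prove this proposition: it is imported wholesale from \cite{Mine3} (Corollaries 4.2 and 4.3), so there is no in-text argument to compare yours against, and I assess it on its own terms. Your proof is correct and is essentially the argument one expects behind the cited result: (i) two left-invariant volume forms on a connected Lie group are proportional, and the factor $V_s(y)=\sqrt{\det\langle E_i^*,E_j^*\rangle(s(y))}$ is smooth and positive by freeness of the action; (ii) two sections differ by right translation, $\Phi'_y=\Phi_y\circ R_{\phi(y)}$, and unimodularity is precisely right-invariance of $d\vol_g$, giving section-independence; (iii) the gradient identity is the first variation of fiber volume in a horizontal direction, which your computation via $[E_i^*,\tilde Y]=0$, Jacobi's formula, $\langle\tilde Y,E_j^*\rangle=0$, and the tensoriality of the second fundamental form (so that $G^{ij}(\nabla_{E_i^*}E_j^*)^{\mathrm h}$ is the frame-independent trace $H$) carries out correctly; the resulting sign is the one forced by the paper's own identity $L=\Delta+p_*H=\Delta-2\grad\ln\sqrt{V}$. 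The only step you use without saying so is that $\det\langle E_i^*,E_j^*\rangle$ is constant along each fiber, which is needed to replace $ds(Y)$ by the horizontal lift $\tilde Y$ when computing $Y(V^2)$; this follows at once from the section-independence you have already proved in step (ii), since every point of the fiber lies on some local section (equivalently, from $|\det\mathrm{Ad}|\equiv 1$ for unimodular $G$). Adding that one sentence closes the argument; no genuine gap.
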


In the case where $G$ is unimodular, it follows that the Schr\"odinger operator defined in (\ref{Schrodinger}) is written as
\[
S = \Delta - \frac{\Delta \sqrt{V}}{\sqrt{V}},
\]
and therefore, corresponds to the symmetric diffusion operator
\[
L = m_{\vf}^{-1} \circ S \circ m_{\vf} = \Delta - 2 \grad \ln \sqrt{V} = \Delta + p_{*}H.
\]
It is noteworthy that 
\begin{equation}\label{lift}
\Delta (f \circ p) = (Lf) \circ p
\end{equation}
for any $f \in C^{\infty}(M_1)$ (cf. for instance \cite[Lemma 2.6]{Mine3}). Moreover, since the Dirichlet spectra of $S$ and $L$ coincide ($m_{\vf}$ being an isometric isomorphism), we derive from \Cref{S positive definite} that $\lambda_0^D(L) > 0$. Hence, any $f \in C^{\infty}(M_1)$ admits a unique $L$-harmonic extension $\mathcal{H}_Lf \in C^{\infty}(M_1)$. This gives rise to the Dirichlet-to-Neumann map
\[
\Lambda_{L} \colon C^{\infty}(\partial M_1) \subset L^2_{\sqrt{V}}(\partial M_1) \to L^2_{\sqrt{V}}(\partial M_1), \text{ } f \mapsto \nu(\mathcal{H}_L f).
\]
It is standard that the spectrum $\sigma_L(M_1)$ of the Friedrichs extension of this map is discrete and the corresponding eigenfunctions are smooth.

Given a section $s \colon U \subset M_1 \to M_2$ and $f \colon G \to \R$, we denote by $f_s \colon p^{-1}(U) \to \R$ the function satisfying
\[
f_s(\Phi(x,y)) = f(x)
\]
for any $x \in G$ and $y \in U$.

\begin{proposition}[{{\cite[Lemma 4.6 and Proposition 4.7]{Mine3}}}]\label{cut-off functions}
Fix a left-invariant metric on $G$. Then for any $r>0$ and any bounded, open $W \subset G$, there exists $\chi \in C^{\infty}_c(G)$ with $\chi = 1$ in $W \smallsetminus B(\partial W,r)$, $\supp \chi \subset B(W,r/2)$, such that for any extensible section $s \colon U \to M_2$, there exists $C > 0$ independent from $W$, satisfying
\[
|\Delta (\chi_s) (z) | \leq C \text{ and } \| \grad (\chi_s)(z) \| \leq C
\]
for any $z \in p^{-1}(U)$.
\end{proposition}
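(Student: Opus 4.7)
The plan is to build $\chi$ directly on the group $G$ by smoothing an indicator function, and then to transfer first- and second-order estimates from $G$ to $M_2$ through the trivialization $\Phi(x,y) = x\,s(y)$, exploiting the $G$-invariance of each fiber metric together with the extensibility of the section to control the comparison between neighboring fiber metrics.

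Concretely, fix the left-invariant metric $g$ on $G$. First I would pick a non-negative smooth mollifier $\eta \in C^{\infty}_c(G)$ with $\int_G \eta\,d\vol_g = 1$ and $\supp \eta \subset B_g(e,r/4)$, and then set $A = W \smallsetminus B(\partial W, 3r/4)$ and define
\[
\chi(x) \,=\, \int_G \mathbf{1}_A(xy^{-1})\,\eta(y)\,d\vol_g(y).
\]
A direct check using the triangle inequality for the left-invariant distance gives $\chi = 1$ on $W \smallsetminus B(\partial W, r)$ and $\supp \chi \subset B(W, r/2)$. Standard mollifier estimates on Lie groups (differentiating under the integral sign and using left invariance of $\eta$) yield bounds $\|\grad_g \chi\|_g \leq C_0$ and $|\Delta_g \chi| \leq C_0$, where $C_0$ depends only on $\eta$ (hence on $r$ and on the bi-invariant data of $G$), and in particular not on $W$.

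Next I would transfer these bounds to $\chi_s$ on $p^{-1}(U)$. Via $\Phi$, the function $\chi_s$ is just $\chi$ composed with the first projection. Decomposing $T_{z}M_2$ at $z=\Phi(x,y)$ into the vertical subspace $V_z$ (tangent to the fiber $F_y$) and the horizontal subspace $H_z$, I would treat each piece separately. The vertical gradient of $\chi_s$ at $z$ equals the $g_{s(y)}$-gradient of $\chi$ at $x$; by Proposition \ref{volume functions} and the smooth dependence of $g_{s(y)}$ on $y$, the metrics $g_{s(y)}$ are uniformly comparable to $g$ over $y$ in a relatively compact set, which is precisely what extensibility of $s$ provides. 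Hence the vertical component of $\grad \chi_s$ is bounded by a constant multiple of $\|\grad_g \chi\|_g$. For the horizontal gradient, along a horizontal curve starting at $\Phi(x,y)$ the $G$-coordinate varies by an amount controlled by the connection $1$-form of $p$ composed with $ds$; extensibility of $s$ bounds this connection form uniformly on $U$, so the horizontal derivative of $\chi_s$ is again controlled by $\|\grad_g \chi\|_g$.

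For the Laplacian I would use an O'Neill-type decomposition
\[
\Delta \chi_s \,=\, \Delta^{v}\chi_s + \Delta^{h}\chi_s - H(\chi_s),
\]
where $\Delta^v$ acts tangentially to the fibers, $\Delta^h$ is the horizontal trace of $\nabla^2 \chi_s$, and $H$ is the mean curvature of the fibers. The vertical term equals $\Delta_{g_{s(y)}}\chi$ evaluated at $x$, which by the uniform comparison of $g_{s(y)}$ with $g$ is dominated by $|\Delta_g \chi|$ plus $\|\grad_g \chi\|_g$. The horizontal Hessian and the $H$-term involve first and second derivatives of $\chi$ together with the section data $s$, the connection form, and the fiber mean curvature $H$; each of these is bounded on $U$ because $s$ is extensible and $M_1$ is compact. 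Putting the pieces together gives $\|\grad \chi_s\|, |\Delta \chi_s| \leq C$ with $C$ depending on $r$, on the extensible section, and on the geometry of $p$, but independent of $W$.

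The main obstacle I anticipate is the bookkeeping for the horizontal second derivatives of $\chi_s$: although $\chi$ ignores the $y$-variable in the trivialization, horizontal vectors do \emph{not} lie in the $\{y=\mathrm{const}\}$ slices, so $\Delta^h \chi_s$ mixes derivatives of $\chi$ with the connection of the bundle and with the variation of $g_{s(y)}$ in $y$. Keeping these terms uniformly bounded — independently of $W$ and of the point in $p^{-1}(U)$ — is where the extensibility hypothesis on $s$ does the real work, since it rules out the section (and the associated geometric data) from blowing up near the boundary of $U$.
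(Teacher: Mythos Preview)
The paper does not prove this proposition: it is quoted verbatim from \cite[Lemma~4.6 and Proposition~4.7]{Mine3}, and the only hint given is the sentence immediately following the statement, namely that ``these functions are obtained from a partition of unity which is constructed by translates of a fixed function (conceptually related to the partition of unity of the previous section).'' So the construction in the cited reference parallels the covering-space argument of Section~\ref{Coverings}: one fixes a single bump $\psi\in C^\infty_c(G)$, forms $\sum_{x\in P}\psi(\,\cdot\,x^{-1})$ over a suitable finite set $P$, and normalizes; the uniform gradient and Laplacian bounds then come from the fact that only boundedly many translates overlap, and the transfer to $\chi_s$ is handled in \cite{Mine3} by working in the trivialization.

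Your mollification approach is a genuine alternative and is essentially correct in outline. The advantage of the partition-of-unity route is that the $W$-independence of the $C^1$ and $C^2$ bounds is completely transparent (everything reduces to the fixed bump $\psi$ and a combinatorial overlap bound), whereas in your approach one has to differentiate the convolution and argue that the derivatives fall on $\eta$; this is fine but requires a little care on a Lie group with only a left-invariant metric. In particular, your ``direct check'' of the support conditions uses $d(x,xy^{-1})<r/4$, which equals $d(e,y^{-1})$, not $d(e,y)$; since a left-invariant metric need not be inversion-invariant, you should take $\supp\eta$ contained in a \emph{symmetric} neighborhood of $e$, or reformulate the convolution so that the smooth factor sits on the side where the derivative lands naturally. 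Your transfer of the estimates to $\chi_s$ via the vertical/horizontal decomposition is the right idea and matches what one would do in \cite{Mine3}; the key point you identify---that the horizontal lift of $\partial/\partial y^i$ in the trivialization is $\partial/\partial y^i$ plus a left-invariant vertical correction with coefficients depending only on $y$ (hence bounded by extensibility of $s$)---is exactly what makes both the first- and second-order horizontal terms controllable by $\|\grad_g\chi\|_g$ and $|\Delta_g\chi|$ uniformly in $W$.
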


The point of this proposition is that the constant depends only on the section, and not on the corresponding $W$. These functions are obtained from a partition of unity which is constructed by translates of a fixed function (conceptually related to the partition of unity of the previous section). The importance of this construction becomes more clear in the following consequence of \Cref{unimodular and amenable}, together with the Cheeger and Buser inequalities (more precisely, the main ingredient in the proof of the latter one).

\begin{proposition}[{{\cite[Corollary 2.11]{Mine3}}}]\label{domain volume}
Suppose that $G$ is non-compact, unimodular and amenable, and choose a left-invariant metric on it. Then for any $\ve > 0$ and $r > 0$, there exists a bounded, open $W \subset G$ such that
\[
| B(\partial W,r) | < \ve | W \smallsetminus B(\partial W , r) |.
\]
\end{proposition}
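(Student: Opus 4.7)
I would deduce this Følner-type estimate from the spectral characterization of amenability in \Cref{unimodular and amenable} via a Cheeger-style level-set argument applied to a test function of small Rayleigh quotient.

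Since $G$ is non-compact, unimodular and amenable, \Cref{unimodular and amenable} gives $\lambda_0(G)=0$, so for every $\eta>0$ \Cref{bottom no boundary} produces a nonnegative $f\in C_c^\infty(G)$ with $\int_G|\grad f|^2<\eta\int_G f^2$. Before extracting a set from $f$, I would first replace $f$ by its convolution with a fixed smooth mollifier $\psi$ supported in $B(e,1)$, in order to arrange a uniform pointwise bound $|\grad f|\leq L$ with $L$ depending only on $\psi$. This uniform mollification is available because the left-invariant metric on $G$ has bounded geometry, and it preserves the smallness of the Rayleigh quotient up to a fixed multiplicative constant.

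Next I would apply the coarea formula to $f^2$ and a layer-cake decomposition, namely
\[
\int_0^\infty |\partial\{f>\sqrt{s}\}|\,ds \;=\; \int_G|\grad f^2| \;\leq\; 2\|f\|_2\|\grad f\|_2 \;\leq\; 2\sqrt{C\eta}\,\|f\|_2^2,\qquad \int_0^\infty|\{f>\sqrt{s}\}|\,ds=\|f\|_2^2,
\]
so that a pigeonhole argument, combined with Sard's theorem, yields a regular value $t^*$ for which the bounded open set $W=\{f>t^*\}$ satisfies $|\partial W|\leq 2\sqrt{C\eta}\,|W|$. The Lipschitz bound $|\grad f|\leq L$ gives the inclusion
\[
B(\partial W,r)\subset\{|f-t^*|\leq Lr\},
\]
and a further pigeonhole selecting $t^*$ inside a window much longer than $Lr$ bounds the measure of the slab on the right by a quantity $\delta(\eta,L,r)\|f\|_2^2$ with $\delta(\eta,L,r)\to 0$ as $\eta\to 0$ for fixed $L,r$. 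For $\eta$ small enough, this forces $|B(\partial W,r)|<\varepsilon\,(|W|-|B(\partial W,r)|)=\varepsilon\,|W\smallsetminus B(\partial W,r)|$, as required.

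The main obstacle is running the pigeonhole on $t^*$ so that it controls both $|\partial W|$ and the thickness of the slab simultaneously; the crucial point is that the Lipschitz constant $L$ coming from the mollification step is independent of $\eta$, so the slab bound can be combined with the coarea bound at the cost only of shrinking $\eta$. A secondary subtlety is that one must select $t^*$ as a regular value and verify that the extracted superlevel set is truly bounded and open, both of which follow from $f\in C_c^\infty(G)$ and the use of Sard's theorem.
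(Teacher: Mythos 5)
First, a remark on the comparison: the paper does not prove this proposition at all --- it is quoted from \cite[Corollary 2.11]{Mine3}, with the indication that it follows from \Cref{unimodular and amenable} together with the main ingredient in the proof of Buser's inequality. Your opening moves (unimodularity and amenability give $\lambda_0(G)=0$ by \Cref{unimodular and amenable}, then one extracts a set from a test function of small Rayleigh quotient) are therefore the intended route, but the way you close the argument has a genuine gap.

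The gap is in the slab step. The inclusion $B(\partial W,r)\subset\{|f-t^*|\le Lr\}$ is correct, but it is far too lossy in exactly the regime you are in: a function of small Rayleigh quotient tends to have small gradient near its level sets, and where $\| \grad f\|$ is small the slab is enormously thicker than the true tube $B(\partial W,r)$. Concretely, on $G=\R^2$ take $f(x)=\min\bigl(\Lambda,\tfrac1c\ln(R/|x|)\bigr)_+$ with $c=\ln(1+\ve)/(4Lr)$ and $\Lambda$ fixed. Then $\mathrm{Lip}(f)\le e^{c\Lambda}/(cR)\le L$ for $R$ large, $\mathcal R(f)=O(R^{-2})\to0$, and $m(t):=|\{f>t\}|=\pi R^2e^{-2ct}$ for $0<t<\Lambda$, so for every level $t^*\in(Lr,\Lambda-Lr)$ one has
\[
|\{|f-t^*|\le Lr\}|\;\ge\;m(t^*-Lr)-m(t^*+Lr)\;=\;\bigl(e^{4cLr}-1\bigr)\,m(t^*+Lr)\;=\;\ve\,|\{f>t^*+Lr\}|,
\]
while for $t^*\le Lr$ the slab contains $\{f=0\}$, of infinite measure, and for $t^*\ge\Lambda-Lr$ the set $\{f>t^*+Lr\}$ is empty. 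So no choice of $t^*$ closes the argument for this admissible test function, even though the conclusion of the proposition is of course true for $\R^2$ (the supports $W=\{f>t^*\}$ are large discs and $B(\partial W,r)$ is tiny --- it is the slab bound that fails, not the statement). Two related points: your claim that the slab measure is at most $\delta(\eta,L,r)\|f\|_2^2$ with $\delta\to0$ as $\eta\to0$ points the wrong way, since by the coarea formula the slab measure is $\int\int_{\{f=t\}}\|\grad f\|^{-1}$, so a small gradient makes slabs larger, not smaller; and the final inequality also needs a lower bound on $|W|=m(t^*)$ in terms of $\|f\|_2^2$, which is not available. What is actually required is a direct estimate of $|B(\partial W_t,r)|$ via the local Poincar\'e/isoperimetric inequality on balls of a manifold with Ricci curvature bounded below --- this is the ``main ingredient in the proof of Buser's inequality'' the paper alludes to --- so you should either invoke \cite[Corollary 2.11]{Mine3} as the paper does or reproduce Buser's lemma in full. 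As secondary issues, the assertion that mollification preserves the smallness of the Rayleigh quotient needs the local Poincar\'e estimate $\|f-f*\psi\|_{L^2}\le C\|\grad f\|_{L^2}$ to bound $\|f*\psi\|_{L^2}$ from below, and on a non-abelian $G$ right translations are not isometries, so $\grad(f*\psi)$ is controlled by $(\|\grad f\|)*|\psi|$ only up to a distortion constant.
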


The most technical part of the proof of \Cref{Submersion thm} is contained in the following.

\begin{proposition}\label{pull-up submersions}
If $G$ is unimodular and amenable, then for any $\lambda \in \sigma_L(M_1)$ and $\ve > 0$, there exists $h \in C^{\infty}_c(M_2)$ such that $\| (\Lambda - \lambda) h \|_{L^2(\partial M_2)} < \ve \| h \|_{L^2(\partial M_2)}$.
\end{proposition}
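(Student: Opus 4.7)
The plan is to lift eigenfunctions of $\Lambda_L$ from $M_1$ to $M_2$ and cut them off along the fibers using F{\o}lner-type domains in $G$. Since $M_1$ is compact, the Friedrichs extension of $\Lambda_L$ has discrete spectrum with smooth eigenfunctions; hence for $\lambda\in\sigma_L(M_1)$ there exists $f\in C^\infty(M_1)$ with $Lf=0$ on $M_1$ and $\nu_1(f)=\lambda f$ on $\partial M_1$, non-trivial on $\partial M_1$. Set $\tilde f = f \circ p\in C^\infty(M_2)$. By the intertwining identity (\ref{lift}), $\Delta\tilde f = (Lf)\circ p = 0$, and since $\nu_2$ is horizontal (as $\partial M_2$ is $G$-invariant) and projects onto $\nu_1$, we have $\nu_2(\tilde f) = \lambda\tilde f$ on $\partial M_2$. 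If $G$ is compact then $M_2$ is compact and $\tilde f\in C^\infty_c(M_2)$; by uniqueness of the square-integrable harmonic extension (\Cref{unique harmonic ext thm}) $\mathcal{H}(\tilde f|_{\partial M_2})=\tilde f$, so $\Lambda(\tilde f|_{\partial M_2})=\lambda\tilde f|_{\partial M_2}$ exactly and the claim holds with $h=\tilde f$ and zero error. We may therefore assume $G$ is non-compact.

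Fix $r>0$ and a finite open cover $\{U_i\}_{i=1}^N$ of $M_1$ by domains of extensible sections $s_i\colon U_i'\supset\overline{U_i}\to M_2$, together with a subordinate partition of unity $\{\psi_i\}\subset C^\infty_c(M_1)$. For each bounded open $W\subset G$, \Cref{cut-off functions} produces $\chi_W\in C^\infty_c(G)$ with $\chi_W\equiv1$ on $W\smallsetminus B(\partial W,r)$, $\supp\chi_W\subset B(W,r/2)$, and with $|\Delta(\chi_W)_{s_i}|$ and $\|\grad(\chi_W)_{s_i}\|$ uniformly bounded by a constant independent of $W$. Define the candidate test function
\[
h_W = \sum_{i=1}^N (\chi_W)_{s_i}\,(\psi_i\circ p)\,\tilde f \in C^\infty_c(M_2).
\]

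Expanding $\Delta h_W$ and $\nu_2(h_W)-\lambda h_W$ using $\Delta\tilde f=0$ and $\nu_2(\tilde f)=\lambda\tilde f$, every surviving term involves a derivative falling on $(\chi_W)_{s_i}$ or on $\psi_i\circ p$. By \Cref{volume functions} and unimodularity, integration over fibers factors through Haar integration on $G$: contributions from the (fixed) $\psi_i$-derivatives are supported on a compact subset of $M_1$ independent of $W$, while contributions from the $\chi_W$-derivatives are supported, in fiber coordinates, in $B(\partial W,r/2)$. Thus
\[
\|\Delta h_W\|_{L^2(M_2)}^2 + \|\nu_2(h_W)-\lambda h_W\|_{L^2(\partial M_2)}^2 \leq C_1\,|B(\partial W,r')|
\]
for some $r'\geq r$ and $C_1>0$ independent of $W$. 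Meanwhile, on the bulk region where every relevant $(\chi_W)_{s_i}$ equals $1$, $h_W$ coincides with $\tilde f$, whence
\[
\|h_W\|_{L^2(\partial M_2)}^2 \geq c\,|W\smallsetminus B(\partial W,r')|\,\|f\|_{L^2_{\sqrt V}(\partial M_1)}^2.
\]
Combining these with \Cref{approximation of test functions} and choosing $W$ via \Cref{domain volume} so that $|B(\partial W,r')|$ is arbitrarily small relative to $|W\smallsetminus B(\partial W,r')|$ yields the claim.

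The main obstacle is controlling the interaction of different sections on overlaps $U_i\cap U_j$: the pullbacks $(\chi_W)_{s_i}$ and $(\chi_W)_{s_j}$ differ by composition with the smooth $G$-valued transition function between $s_i$ and $s_j$, so they do not patch to a single global cut-off on $M_2$. One must verify that these transitions produce only bounded corrections on compact subsets of $M_1$ (independent of $W$), so that on the bulk the sum $\sum_i(\chi_W)_{s_i}(\psi_i\circ p)$ is genuinely comparable to $1$, and that the error terms continue to be supported only in $B(\partial W,r')$ after the section-mismatch is accounted for. The uniformity in $W$ built into \Cref{cut-off functions} is exactly what makes this reduction work, allowing the F{\o}lner-type ratio from \Cref{domain volume} to close the approximate-eigenfunction estimate.
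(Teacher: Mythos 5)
Your strategy coincides with the paper's: lift an eigenfunction $f$ of $\Lambda_L$, dispose of the compact case via $\Delta\tilde f=0$ and $\nu_2(\tilde f)=\lambda\tilde f$, and in the non-compact case cut off along the fibers with the functions of \Cref{cut-off functions}, bounding the error terms by a constant times $|B(\partial W,r')|_g$, bounding $\|h_W\|^2_{L^2(\partial M_2)}$ from below by a constant times $|W\smallsetminus B(\partial W,r')|_g$, and closing with \Cref{domain volume} and \Cref{approximation of test functions}. This is exactly the paper's route.

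However, the step you explicitly defer --- the section mismatch on overlaps $U_i\cap U_j$ --- is where the real content lies, and your proposed resolution is not strong enough as stated. It does not suffice that $\sum_i(\chi_W)_{s_i}(\psi_i\circ p)$ be \emph{comparable} to $1$ on the bulk: the bulk carries measure of order $|W|_g$, so any nonvanishing of $\Delta h_W$ or of $\nu_2(h_W)-\lambda h_W$ there would contribute an error of order $|W|_g$ rather than $|B(\partial W,r')|_g$, and the F{\o}lner ratio from \Cref{domain volume} could not absorb it. What is needed is that $h_W=\tilde f$ \emph{exactly} on a slightly shrunk bulk. The paper arranges this by first choosing $r$ so large that all transition maps satisfy $x_{ij}(U_i\cap U_j)\subset B_g(e,r)$ (possible because the $U_i$ are precompact and the sections extensible), and then taking the bulk in fiber coordinates to be $W\smallsetminus B(\partial W,3r)$ rather than $W\smallsetminus B(\partial W,r)$: the identity $\Phi_i(x,y)=\Phi_j\bigl(x\,x_{ji}(y),y\bigr)$ then shows that every $(\chi_W)_{s_j}$ equals $1$ on that set, so the partition of unity sums to $1$ and $h_W=\tilde f$ there; the same device confines $\supp h_W\cap F_y$ to $\Phi_i\bigl(C(W,2r),y\bigr)$, so all error terms live in a region whose fiberwise measure is controlled by $|B(\partial W,3r)|_g$ uniformly in $i$ and $y$. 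Once this is inserted, your outline becomes the paper's proof.
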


\begin{proof}
Since $M_1$ is compact, for any $\lambda \in \sigma_{L}(M_1)$, there exists $f \in C^{\infty}(M_1)$ with $\| f \||_{L^2_{\sqrt{V}}(\partial M_1)} = 1$, $L f = 0$ in $M_1$ and $\nu_1(f) = \lambda f$ on $\partial M_1$. If $G$ is compact, it is immediate to verify that its lift $\tilde{f} = f \circ p$ is harmonic and satisfies $\nu_2 (\tilde{f}) = \lambda \tilde{f}$ on $\partial M_2$, by virtue of (\ref{lift}). Hence, it remains to prove the assertion in the case where $G$ is non-compact.
	
To this end, cover $M_1$ with finitely many open domains $U_i$ that admit extensible sections $s_i \colon U_i \subset M_1 \to M_2$, $i=1,\dots,k$, and consider a smooth partition of unity $\{\vf_i\}_{1\leq i \leq k}$ subordinate to $\{U_i\}_{1 \leq i \leq k}$. Denote by $x_{ij} \colon U_{i} \cap U_{j} \to G$ the transition maps, which are defined by $s_{j}(y) = x_{ij}(y)s_{i}(y)$ for all $y \in U_{i} \cap U_{j}$, and by $\Phi_{i} \colon G \times U_{i} \to p^{-1}(U_{i})$ the diffeomorphisms defined by $\Phi_{i}(x,y) = x s_{i}(y)$, $i,j=1,\dots,k$.

Choose a left-invariant metric $g$ on $G$. Using that $U_{i}$ is precompact and $s_{i}$ is extensible, we readily see that there exists $r > 0$ such that $x_{ij}(U_{i} \cap U_{j}) \subset B_{g}(e,r)$ for any $i,j=1,\dots,k$, where $e$ is the neutral element of $G$. Given a bounded, open $W \subset G$, denote by $\chi$ the corresponding function, according to \Cref{cut-off functions}, for $r$ as above, where we regard $G$ endowed with the fixed Riemannian metric $g$. Consider the compactly supported, smooth function
\[
h_{i} := \chi_{s_{i}} \tilde{\varphi}_{i} \tilde{f}
\]
in $p^{-1}(U_{i})$, $i=1 , \dots , k$, where $\tilde{\vf}_i = \vf_i \circ p$ and $\tilde{f} = f \circ p$. For $h = \sum_{i=1}^{k} h_{i}$, we obtain from \Cref{cut-off functions}, that there exists $C_f>0$ independent from $W$, such that $|h(z)| \leq C_f$, $\|\grad h(z)\| \leq C_f$ and $| \Delta h (z) | \leq C_f$ for any $z \in M_{2}$. It follows from \Cref{domain volume} that there exists a bounded, open $W \subset G$ such that
\begin{equation}\label{volume estimate}
	\frac{|W_{0}^{\prime}|_{g}}{|W_{0}|_{g}} < \min \left\{\frac{\ve^2}{8 (\lambda^2 + 1) C_f^{2} \int_{\partial M_1} V} , \frac{\ve^2}{4 C^2 C_f^2 \int_{M_1} V} \right\},
\end{equation}
where $W_0^\prime =  B(\partial W , 3r)$, $W_{0} = W \smallsetminus W_0^\prime$, and $C$ is the constant from \Cref{approximation of test functions} on $M_2$. To simplify the notation, set $D_0 = W \smallsetminus C(\partial W,2r)$, $D_0^\prime = C(W,2r)$, and given $y \in U_i$, let $W_i(y) = \Phi_i(W_0 , y)$, $W_i^\prime(y) = \Phi_i(W_0^\prime , y)$, $D_i(y) = \Phi_i(D_0 , y)$ and $D_i^\prime(y) = \Phi_i(D_0^\prime , y)$, $i=1,\dots,k$.
Here, $B(\cdot,\cdot)$ and $C(\cdot, \cdot)$ stand for open and closed tubular neighborhoods with respect to the fixed Riemannian metric $g$, respectively.
Using that
\[
\Phi_{i}(x,y) = \Phi_{j}(xx_{ji}(y) , y)
\]
for any $y \in U_{i} \cap U_{j}$ and $x \in G$, it is immediate to verify that $h(z) = \tilde{f}(z)$ for any $z \in D_{i}(y) \supset W_i(y)$ and that $\supp h \cap F_{y} \subset D_{i}(y) \cup D_{i}^{\prime}(y) \subset W_i(y) \cup W_i^\prime(y)$ for any $y \in U_{i}$, $i=1,\dots,k$. 

By virtue of \Cref{volume functions}, we compute
\begin{eqnarray}
	\| h \|_{L^{2}(\partial M_2)}^{2} &=& \sum_{i=1}^{k} \int_{\partial M_{2}} \tilde{\varphi}_{i}h^{2} \geq \sum_{i=1}^{k} \int_{\partial M_1 \cap U_{i}} \int_{W_{i}(y)} \tilde{\varphi}_{i}h^{2} \dy \nonumber\\
	&=& \sum_{i = 1}^{k} \int_{\partial M_1 \cap U_{i}} \varphi_{i}(y) f^{2}(y)|W_{0}|_{g_{s_{i}(y)}} \dy \nonumber \\
	&=& |W_{0}|_{g} \sum_{i = 1}^{k} \int_{\partial M_1 \cap U_{i}} \varphi_{i} f^{2} V = |W_{0}|_{g}, \nonumber
\end{eqnarray}
where we used that $\| f \|_{L^2_{\sqrt{V}}(\partial M_1)} = 1$.
Moreover, it is evident that
\[
\| \nu_2 (h) - \lambda h \|_{L^{2}(\partial M_{2})}^{2} = \sum_{i = 1}^{k} \int_{\partial M_{2}} \tilde{\varphi}_{i} (\nu_2 (h) - \lambda h)^{2} = Q_1 + Q_2,
\]
where
\[
Q_1 = \sum_{i= 1}^{k}  \int_{\partial M_1 \cap U_{i}} \int_{W_{i}(y)} \tilde{\varphi}_{i} (\nu_2 (h) - \lambda h)^{2} \dy , \text{ } Q_2 = \sum_{i= 1}^{k} \int_{\partial M_1 \cap U_{i}} \int_{W_{i}^{\prime}(y)} \tilde{\varphi}_{i} (\nu_2 (h) - \lambda h)^{2} \dy.
\]
To estimate these quantities, recall that $D_0$ is an open subset of $G$, and hence, so is $\Phi_i(D_0 \times U_i) \subset M_2$, which is a neighborhood of $W_i(y)$ for any $y \in U_i \cap \partial M_1$, $i=1,\dots,k$. Since $h = \tilde{f}$ in $\Phi_i(D_0 \times U_i)$, in $W_i(y)$ we have that
\[
\nu_2(h) = \nu_2(\tilde{f}) = \< \nu_2 , \grad \tilde{f} \> = \< \nu_1 , \grad f \> \circ p = \lambda \tilde{f} = \lambda h,
\]
and thus, $Q_1 = 0$. In $W_i^\prime(y)$, using \Cref{volume functions} and that $(\nu_2(h) - \lambda h)^2 \leq 2C_f^2(\lambda^2 + 1)$, we readily see that
\[
Q_2 \leq 2C_f^2(\lambda^2 + 1) \sum_{i=1}^{k} \int_{\partial M_1 \cap U_{i}} \varphi_{i}(y) |W^{\prime}_{0}|_{g_{s_{i}(y)}} \dy 
= 2C_f^2(\lambda^2 + 1) |W^{\prime}_{0}|_{g} \int_{\partial M_1} V.
\]

Furthermore, it is apparent that
\[
\| \Delta h \|_{L^{2}(M_{2})}^{2} = \sum_{i = 1}^{k} \int_{M_{2}} \tilde{\varphi}_{i} (\Delta h)^{2} = Q_3 + Q_4,
\]
where
\[
Q_3 = \sum_{i= 1}^{k} \int_{U_{i}} \int_{W_{i}(y)} \tilde{\varphi}_{i} (\Delta h)^{2} \dy, \text{ } Q_4 = \sum_{i=1}^k \int_{U_{i}} \int_{W_{i}^{\prime}(y)} \tilde{\varphi}_{i} (\Delta h)^{2} \dy. \nonumber
\]
Using again that $h = \tilde{f}$ in $\Phi_i(D_0 \times U_i) \supset W_i(y)$, we obtain from (\ref{lift}) that $\Delta h = 0$ in $W_i(y)$, and hence, $Q_3 = 0$. Finally, \Cref{volume functions} implies that
\[
Q_4 \leq C_f^{2} \sum_{i=1}^{k} \int_{U_{i} } \varphi_{i}(y) |W^{\prime}_{0}|_{g_{s_{i}(y)}} dy 
= C_f^{2} |W^{\prime}_{0}|_{g} \int_{M_1} V. 
\]

From the above estimates and \Cref{approximation of test functions}, we conclude that
\[
\frac{\| (\Lambda - \lambda)h \|^2_{L^{2}(\partial M_{2})}}{\| h \|^2_{L^{2}(\partial M_{2})}} \leq 2 \frac{\| \nu(h) - \lambda h \|^2_{L^{2}(\partial M_{2})}}{\| h \|^2_{L^{2}(\partial M_{2})}} + 2C^2 \frac{\| \Delta h \|^2_{L^2(M_2)}}{\| h \|^2_{L^{2}(\partial M_2)}} < \ve^2,
\]
by virtue of (\ref{volume estimate}). \qed\medskip
\end{proof}

Another important ingredient in the proof of \Cref{Submersion thm} involves the behavior of the Neumann spectrum under Riemannian submersions. One can establish the following by arguing as in \cite{Mine3}. However, in our setting, where the base manifold is compact, we can prove it in a simpler way (which also establishes the analogous assertion if the base manifold is closed).

\begin{theorem}\label{converse Neumann}
If $\lambda_0^N(M_2) = 0$, then $G$ is unimodular and amenable.
\end{theorem}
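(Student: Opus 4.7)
The plan is to exhibit, for some left-invariant metric on $G$, a sequence $\tilde h_n \in C^\infty_c(G)$ whose Rayleigh quotient tends to zero; by \Cref{bottom no boundary} this forces $\lambda_0(G)=0$ for that metric, and then \Cref{unimodular and amenable} delivers that $G$ is unimodular and amenable. The test functions will come from restricting an almost-minimizer for the Neumann spectrum of $M_2$ to a well-chosen fiber. Since $M_1$ is compact, the fiber metrics stay in a precompact family of left-invariant metrics on $G$, which is precisely the simplification over the non-compact-base case treated in \cite{Mine3}.

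From $\lambda_0^N(M_2)=0$ and \Cref{bottom Neumann}, I pick $f_n\in C^\infty_c(M_2)$ with $\|f_n\|_{L^2(M_2)}=1$ and $\int_{M_2}\|\grad f_n\|^2\to 0$. Decomposing $\grad f_n = \grad^{v}f_n + \grad^{h}f_n$ with respect to the vertical-horizontal splitting induced by $p$, one has $\int_{M_2}\|\grad^v f_n\|^2\to 0$ as well. Fubini for Riemannian submersions then yields that
\[
a_n(y) := \int_{F_y} f_n^2,\qquad b_n(y):=\int_{F_y}\bigl\|\grad_{F_y}(f_n|_{F_y})\bigr\|^2
\]
are smooth on $M_1$, with $\int_{M_1} a_n = 1$ and $\int_{M_1} b_n \to 0$, since $\grad_{F_y}(f_n|_{F_y})$ is exactly $\grad^v f_n$ restricted to $F_y$.

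Cover $M_1$ by finitely many open sets $U_1,\dots,U_k$ carrying extensible sections $s_i\colon U_i\to M_2$. By pigeonhole, after passing to a subsequence, some index $i$ satisfies $\int_{U_i} a_n\geq 1/k$ for all $n$, while $\int_{U_i} b_n\to 0$. A Chebyshev-type argument on $U_i$ then produces $y_n\in U_i$ with $a_n(y_n)>0$ and $b_n(y_n)/a_n(y_n)\to 0$. Passing to a further subsequence, $y_n\to y_\infty$ inside the domain of an extension of $s_i$. Define $\tilde h_n \in C^\infty_c(G)$ by $\tilde h_n(x)=f_n(x\,s_i(y_n))$; since $\Phi(\cdot,y_n)\colon (G,g_{s_i(y_n)})\to F_{y_n}$ is an isometry by construction of $g_{s_i(y)}$, we get
\[
\int_G \tilde h_n^2\,d\vol_{g_{s_i(y_n)}} = a_n(y_n),\qquad \int_G\bigl\|\grad_{g_{s_i(y_n)}}\tilde h_n\bigr\|^2\,d\vol_{g_{s_i(y_n)}} = b_n(y_n),
\]
so the Rayleigh quotient of $\tilde h_n$ on $(G,g_{s_i(y_n)})$ tends to zero.

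Because $y\mapsto g_{s_i(y)}$ is a continuous family of inner products on $\mathfrak g$, the left-invariant metrics $g_{s_i(y_n)}$ are eventually uniformly comparable to the fixed limit metric $g_\infty := g_{s_i(y_\infty)}$; hence the Rayleigh quotient of $\tilde h_n$ with respect to $g_\infty$ also tends to zero, giving $\lambda_0(G,g_\infty)=0$, and \Cref{unimodular and amenable} completes the proof. The main obstacle is really just bookkeeping: the pulled-back fiber metrics depend on $n$, and one must arrange to land in a precompact family of left-invariant metrics so that the vanishing of the Rayleigh quotient transfers to a single fixed left-invariant metric on $G$. Compactness of $M_1$ is exactly what makes that step painless, and removes the need for the more delicate Følner-type constructions used in \cite{Mine3} when the base is non-compact.
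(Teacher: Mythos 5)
Your proposal is correct and follows essentially the same route as the paper: both extract the vertical part of the gradient of a Neumann almost-minimizer, use the coarea/Fubini formula for the submersion together with a finite cover by extensible sections and an averaging (mediant/Chebyshev) argument to locate a fiber on which the restricted Rayleigh quotient is small, transport it to $G$ via $\Phi_{i,y}$, and invoke the uniform comparability of the fiber metrics $g_{s_i(y)}$ (guaranteed by extensibility/compactness) to conclude $\lambda_0(G)=0$ for a fixed left-invariant metric, whence \Cref{unimodular and amenable} applies. The only cosmetic difference is that you compare against the limit metric $g_{s_i(y_\infty)}$ along a subsequence, whereas the paper fixes a reference left-invariant metric $g$ at the outset and uses a single uniform constant $c$.
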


\begin{proof}
	Cover $M_1$ with finitely many open domains $U_i$ admitting extensible sections $s_i  \colon U_i \subset M_1 \to M_2$, and denote by $\Phi_{i,y} \colon G \to F_y$ the diffeomorphism $\Phi_{i,y}(x) = x s_i(y)$ with $y \in U_i$, $i=1,\dots,k$. Let $\{\vf_{i}\}_{1\leq i\leq k}$ be a smooth partition of unity subordinate to $\{U_i\}_{1\leq i\leq k}$.
	Fix a left-invariant Riemannian metric $g$ on $G$ and consider the functions $V_{s_i} \in C^{\infty}(U_i)$ from \Cref{volume functions}. Since $s_i$ is extensible, it follows that there exists $c>0$ such that 
	\[
	\| {\grad}_{g_{s_i(y)}} f \|_{g_{s_i(y)}} \geq c \| {\grad}_{g} f \|_{g}
	\]
	for any $f \in C^{\infty}(G)$ and $y \in U_i$, $i=1,\dots,k$.

	Since $\lambda_0^N(M_2) = 0$, we obtain from \Cref{bottom Neumann} that for any $\ve > 0$ there exists a non-zero $f \in C^{\infty}_c(M_2)$ such that
	\begin{eqnarray}
		\ve &>& \frac{\int_{M_2} \| \grad f \|^2}{\int_{M_2} f^2} \geq \frac{\int_{M_2}  \| (\grad f)^v \|^2}{\int_{M_2} f^2} = \frac{\int_{M_1} \int_{F_y} \| \grad (f|_{F_y}) \|^2 \dy}{\int_{M_1} \int_{F_y} f^2 \dy}\nonumber\\
		&=& \frac{ \sum_{i=1}^k \int_{U_i} \vf_{i}(y) \int_{F_y} \| \grad (f|_{F_y}) \|^2 \dy}{\sum_{i=1}^k \int_{U_i} \vf_{i}(y) \int_{F_y} f^2 \dy} \nonumber\\
		&=&  \frac{\sum_{i=1}^k \int_{U_i} \vf_{i}(y) \int_{G} \| \grad_{g_{s_i(y)}} (f \circ \Phi_{i,y}) \|_{g_{s_i(y)}}^2 V_{s_i}(y) \dy}{\sum_{i=1}^k \int_{U_i} \vf_{i}(y) \int_{G} (f \circ \Phi_{i,y})^2 V_{s_i}(y) \dy} \nonumber
	\end{eqnarray}
where $(\grad f)^v$ stands for the vertical component of $\grad f$, and the integrals over $G$ are with respect to the fixed Riemannian metric $g$. It is now clear that there exists $1 \leq i \leq k$ and $y \in U_i$ such that $\vf_{i}(y) > 0 $, $f$ is not identically zero on $F_y$, and we have that
\[
\ve > \frac{ \vf_i(y) \int_{G}  \| \grad_{g_{s_i(y)}} (f \circ \Phi_{i,y}) \|_{g_{s_i(y)}}^2 V_{s_i}(y) }{\vf_i(y) \int_{G}  (f \circ \Phi_{i,y})^2 V_{s_i}(y) } \geq c^2  \frac{\int_{G}  \| \grad_{g} (f \circ \Phi_{i,y}) \|_{g}^2  }{\int_{G}  (f \circ \Phi_{i,y})^2  } = \mathcal{R}_{g}(f \circ \Phi_{i,y}).
\]
Since $\ve > 0$ is arbitrary, we conclude from \Cref{bottom no boundary} that $\lambda_0(G,g) = 0$, and therefore, $G$ is unimodular and amenable, by \Cref{unimodular and amenable}.\qed
\end{proof}

\medskip

\noindent{\emph{Proof of \Cref{Submersion thm}:}} 
If $G$ is unimodular and amenable, then $\sigma_L(M_1) \subset \sigma(M_2)$ by virtue of \Cref{pull-up submersions,,approximate eigenvalues}. Taking into account that $0 \in \sigma_L(M_1)$, it evident that the second statement implies the third. Finally, if $\sigma_0(M_2) = 0$, then we derive from \Cref{Steklov and Neumann} that $\lambda_0^N(M_2) =0$, and hence $G$ is unimodular and amenable, in view of \Cref{converse Neumann}. \qed

\begin{bibdiv}
\begin{biblist}
	
\bib{BMP1}{article}{
	author={Ballmann, W.},
	author={Matthiesen, H.},
	author={Polymerakis, P.},
	title={On the bottom of spectra under coverings},
	journal={Math. Z.},
	volume={288},
	date={2018},
	number={3-4},
	pages={1029--1036},
	issn={0025-5874},
}

\bib{BP}{article}{
	author={Ballmann, W.},
	author={Polymerakis, P.},
	title={Bottom of spectra and coverings},
	journal={Surv. Differ. Geom.},
	volume={23},
	date={2020},
	pages={1--13},
}

\bib{Bessa}{article}{
	author={Bessa, G. P.},
	author={Montenegro, J. F.},
	author={Piccione, P.},
	title={Riemannian submersions with discrete spectrum},
	journal={J. Geom. Anal.},
	volume={22},
	date={2012},
	number={2},
	pages={603--620},
}

\bib{MR2963622}{article}{
	author={Bordoni, M.},
	title={Spectra of submersions},
	conference={
		title={Contemporary geometry and related topics},
	},
	book={
		publisher={Univ. Belgrade Fac. Math., Belgrade},
	},
	date={2006},
	pages={51--62},
	eprint={https://emis.de/proceedings/CGRT2005/},
}

\bib{Brooks}{article}{
   author={Brooks, R.},
   title={The fundamental group and the spectrum of the Laplacian},
   journal={Comment. Math. Helv.},
   volume={56},
   date={1981},
   number={4},
   pages={581--598},
}

\bib{MR3787357}{article}{
	author={Cavalcante, M. P.},
	author={Manfio, F.},
	title={On the fundamental tone of immersions and submersions},
	journal={Proc. Amer. Math. Soc.},
	volume={146},
	date={2018},
	number={7},
	pages={2963--2971},
	issn={0002-9939},
}

\bib{MR2348279}{article}{
	author={Dunn, C.},
	author={Gilkey, P.},
	author={Park, J.},
	title={The spectral geometry of the canonical Riemannian submersion of a
		compact Lie group},
	journal={J. Geom. Phys.},
	volume={57},
	date={2007},
	number={10},
	pages={2065--2076},
	issn={0393-0440},
}

\bib{Eschenburg-Heintze}{article}{
	author={Eschenburg, J.-H.},
	author={Heintze, E.},
	title={Comparison theory for Riccati equations},
	journal={Manuscripta Math.},
	volume={68},
	date={1990},
	number={2},
	pages={209--214},
	issn={0025-2611},
}

\bib{MR2110043}{book}{
	author={Falcitelli, M.},
	author={Ianus, S.},
	author={Pastore, A. M.},
	title={Riemannian submersions and related topics},
	publisher={World Scientific Publishing Co., Inc., River Edge, NJ},
	date={2004},
	pages={xiv+277},
	isbn={981-238-896-6},
}

\bib{FS}{article}{
	author={Fraser, A.},
	author={Schoen, R.},
	title={The first Steklov eigenvalue, conformal geometry, and minimal
		surfaces},
	journal={Adv. Math.},
	volume={226},
	date={2011},
	number={5},
	pages={4011--4030},
	issn={0001-8708},
}

\bib{MR1707341}{book}{
	author={Gilkey, P.},
	author={Leahy, J.},
	author={Park, J.},
	title={Spectral geometry, Riemannian submersions, and the Gromov-Lawson
		conjecture},
	series={Studies in Advanced Mathematics},
	publisher={Chapman \& Hall/CRC, Boca Raton, FL},
	date={1999},
	pages={viii+279},
	isbn={0-8493-8277-7},
}

\bib{GP}{article}{
	author={Girouard, A.},
	author={Polterovich, I.},
	title={Shape optimization for low Neumann and Steklov eigenvalues},
	journal={Math. Methods Appl. Sci.},
	volume={33},
	date={2010},
	number={4},
	pages={501--516},
	issn={0170-4214},
}

\bib{MR0251549}{book}{
	author={Greenleaf, F. P.},
	title={Invariant means on topological groups and their applications},
	series={Van Nostrand Mathematical Studies, No. 16},
	publisher={Van Nostrand Reinhold Co., New York-Toronto, Ont.-London},
	date={1969},
	pages={ix+113},
}

\bib{GN}{article}{
	author={Grosse, N.},
	author={Nistor, V.},
	title={Neumann and mixed problems on manifolds with boundary and
	bounded geometry},
	date={2017},
	pages={hal-01493070},
}

\bib{MR1361167}{book}{
	author={Hislop, P. D.},
	author={Sigal, I. M.},
	title={Introduction to spectral theory},
	series={Applied Mathematical Sciences},
	volume={113},
	subtitle={With applications to Schr\"odinger operators},
	publisher={Springer-Verlag, New York},
	date={1996},
	pages={x+337},
}

\bib{Hoke}{article}{
	author={Hoke, H. F., III},
	title={Lie groups that are closed at infinity},
	journal={Trans. Amer. Math. Soc.},
	volume={313},
	date={1989},
	number={2},
	pages={721--735},
	issn={0002-9947},
}

\bib{MR1335452}{book}{
	author={Kato, T.},
	title={Perturbation theory for linear operators},
	series={Classics in Mathematics},
	note={Reprint of the 1980 edition},
	publisher={Springer-Verlag, Berlin},
	date={1995},
	pages={xxii+619},
}

\bib{MR454886}{article}{
	author={Milnor, J.},
	title={On fundamental groups of complete affinely flat manifolds},
	journal={Adv. in Math.},
	volume={25},
	date={1977},
	number={2},
	pages={178--187},
	issn={0001-8708},
}

\bib{Mine2}{article}{
	author={Polymerakis, P.},
	title={Coverings preserving the bottom of the spectrum},
	journal={MPI-Preprint 2019-3, https://arxiv.org/abs/1811.07844},
}

\bib{Mine}{article}{
	author={Polymerakis, P.},
	title={On the spectrum of differential operators under Riemannian coverings},
	journal={J. Geom. Anal.},
	volume={30},
	date={2020},
	number={3},
	pages={3331--3370}
}

\bib{Mine4}{article}{
	author={Polymerakis, P.},
	title={Spectral estimates and discreteness of spectra under Riemannian submersions},
	journal={Ann. Global Anal. Geom.},
	volume={57},
	date={2020},
	number={2},
	pages={349--363},
}

\bib{Mine3}{article}{
	author={Polymerakis, P.},
	title={Spectral estimates for Riemannian submersions with fibers of basic mean curvature},
	journal={J. Geom. Anal.},
	date={2021},
	doi={10.1007/s12220-021-00634-z}
}

\bib{MR1817852}{article}{
	author={Schick, T.},
	title={Manifolds with boundary and of bounded geometry},
	journal={Math. Nachr.},
	volume={223},
	date={2001},
	pages={103--120},
	issn={0025-584X},
}

\end{biblist}
\end{bibdiv}

\noindent Max Planck Institute for Mathematics \\
Vivatsgasse 7, 53111, Bonn \\
E-mail address: polymerp@mpim-bonn.mpg.de

\end{document}